\newtheorem{thm}{Theorem}[section]
\newtheorem{lemma}[thm]{Lemma}
\newtheorem{prop}[thm]{Proposition}
\newtheorem{cor}[thm]{Corollary}
\newtheorem{assumption}[thm]{Assumption}
\newtheorem*{prb*}{Uniformity Problem}
\theoremstyle{definition}
\newtheorem{ex}[thm]{Example}
\newtheorem{question}[thm]{Question}
\newtheorem{rem}[thm]{Remark}
\newtheorem*{rem*}{Remark}
\numberwithin{equation}{section}
\renewcommand{\le}{\leqslant}
\renewcommand{\ge}{\geqslant}
\def\emptyset{\varnothing}
\def\emph{}
\DeclareTextFontCommand{\bfemph}{\bf}
\DeclareTextFontCommand{\itemph}{\it}
\def\emph{\bfemph}
\def\blankfootnote{\xdef\@thefnmark{}\@footnotetext}
\newcommand*{\textlabel}[2]{%
  \edef\@currentlabel{#1}
  \phantomsection
  #1\label{#2}
}
\newcommand{\idx}[1]{\lvert#1\rvert}
\newcommand{\divides}[2]{\ensuremath{ {#1} \mid {#2} }}
\newcommand{\FF}{\mathbf{F}}
\newcommand{\Gl}{\mathfrak{gl}}
\newcommand{\Sl}{\mathfrak{sl}}
\newcommand{\GG}{\ensuremath{\mathbf{G}}}
\newcommand{\G}{\ensuremath{\mathsf{G}}}
\newcommand{\acts}{\ensuremath{\curvearrowright}}
\newcommand{\QQ}{\mathbf{Q}}
\newcommand{\NN}{\mathbf{N}}
\newcommand{\ZZ}{\mathbf{Z}}
\newcommand{\CC}{\mathbf{C}}
\newcommand{\RR}{\mathbf{R}}
\newcommand{\fg}{\ensuremath{\mathfrak g}}
\newcommand{\fm}{\ensuremath{\mathfrak m}}
\newcommand{\fK}{\ensuremath{\mathfrak K}}
\newcommand{\Places}{\ensuremath{\mathcal V}}
\newcommand{\XX}{\ensuremath{\bm X}}
\newcommand{\Zeta}{\ensuremath{\mathsf{Z}}}
\newcommand{\xx}{\ensuremath{\bm x}}
\newcommand{\fp}{\mathfrak{p}}
\newcommand{\fP}{\mathfrak{P}}
\newcommand{\fo}{\mathfrak{o}}
\newcommand{\fO}{\mathfrak{O}}
\newcommand{\cA}{\mathcal{A}}
\newcommand{\sA}{\mathsf{A}}
\newcommand{\sM}{\mathsf{M}}
\newcommand{\sU}{\mathsf{U}}
\newcommand{\sV}{\mathsf{V}}
\newcommand{\cL}{\mathcal{L}}
\newcommand{\sL}{\mathsf{L}}
\newcommand{\cC}{\mathcal{C}}
\newcommand{\cP}{\mathcal{P}}
\newcommand{\cZ}{\mathcal{Z}}
\DeclareMathOperator{\GL}{GL}
\DeclareMathOperator{\End}{End}
\DeclareMathOperator{\Uni}{U}
\DeclareMathOperator{\Mat}{M}
\DeclareMathOperator{\Spec}{Spec}
\DeclareMathOperator{\topo}{top}
\DeclareMathOperator{\adj}{adj}
\DeclareMathOperator{\ad}{ad}
\DeclareMathOperator{\wirr}{\widetilde{irr}}
\newcommand{\Torus}{\mathbf{T}}
\newcommand{\Euler}{\ensuremath{\chi}}
\newcommand{\Orth}{\RR_{\ge 0}}
\DeclareMathOperator{\dd}{d\!}
\newcommand{\normal}{\triangleleft}
\newcommand{\dtimes}{\ensuremath{\,\cdotp}}
\newcommand{\card}[1]{\lvert#1\rvert}
\DeclarePairedDelimiter{\abs}{\lvert}{\rvert}
\DeclarePairedDelimiter{\genfun}{\lvert}{\rvert}
\DeclareMathOperator{\rank}{rk}
\DeclareMathOperator{\gr}{gr}
\newcommand{\LattE}{\texttt{LattE}}
\newcommand{\llb}{\ensuremath{[\![ }}
\newcommand{\rrb}{\ensuremath{]\!] }}
\title{Computing local zeta functions of groups, algebras, and modules}
\author{Tobias Rossmann}
\affil{\small Fakult\"at f\"ur Mathematik, Universit\"at Bielefeld, D-33501
  Bielefeld, Germany}
\date{}
\begin{document}

\maketitle
\thispagestyle{empty}

\vspace*{-4em}
\begin{abstract}
  \small
  We develop a practical method for computing local zeta
  functions of groups, algebras, and modules in fortunate cases.
  Using our method, we obtain a complete classification of generic local
  representation zeta functions associated with unipotent algebraic groups of
  dimension at most six. 
  We also determine the generic local subalgebra zeta functions associated with
  $\Gl_2(\QQ)$.
  Finally, we introduce and compute examples of graded subobject zeta functions.
\end{abstract}

\blankfootnote{\indent{\itshape 2010 Mathematics Subject Classification.}
  11M41, 20F69, 20G30, 20F18, 20C15.
  
  {\itshape Keywords.} Subgroup growth, representation growth, zeta functions,
  unipotent groups, Lie algebras.

  This work is supported by the DFG Priority Programme
  ``Algorithmic and Experimental Methods in Algebra, Geometry and Number
  Theory'' (SPP 1489).}

\section{Introduction}
\label{s:intro}

\paragraph{Zeta functions counting subobjects and representations.}
By considering associated Dirichlet series, various algebraic counting problems
give rise to a \itemph{global zeta function} $\Zeta(s)$ which admits a natural Euler
product factorisation $\Zeta(s) = \prod_p \Zeta_p(s)$ into
\itemph{local zeta functions} $\Zeta_p(s)$ indexed by rational primes~$p$. 
For example, $\Zeta(s)$ could be the Dirichlet series enumerating 
subgroups of finite index within a finitely generated nilpotent group and
$\Zeta_p(s)$ might enumerate those subgroups of $p$-power index only (see
\cite{GSS88});
in the special case of the infinite cyclic group, 
we then recover the classical Euler factorisation $\zeta(s) = \prod_p
1/(1-p^{-s})$ of the Riemann zeta function.

This article is concerned with three types of counting problems and 
associated zeta functions;
all of these problems arose from (and remain closely related to)
enumerative problems for nilpotent groups.
\begin{itemize}
\item
  (\cite{GSS88})
  Enumerate subalgebras of finite additive index of a possibly non-associative
  algebra, e.g.\ a Lie algebra (possibly taking into account an additive grading).
\item
  (\cite{Sol77})
  Enumerate submodules of finite additive index under the action of an integral
  matrix algebra.
\item
  (\cite{Vol10,HMRC15})
  Enumerate twist-isoclasses of finite-dimensional complex representations of a finitely generated
  nilpotent group.
\end{itemize}

\paragraph{Generic local zeta functions.}
Each of the preceding three counting problems provides us with a global zeta function
$\Zeta(s)$ (namely the associated Dirichlet series) and a factorisation
$\Zeta(s) = \prod_p \Zeta_p(s)$ as above. 
The goal of this article is to compute the 
\itemph{generic local zeta functions} $\Zeta_p(s)$ at least in favourable situations---that is, we
seek to simultaneously determine $\Zeta_p(s)$ for almost all $p$ using a single
finite computation.
To see why this is a sensible problem, we first recall some theory.

In the cases of interest to us, each $\Zeta_p(s)$ will be a rational
function in $p^{-s}$ over $\QQ$.
In particular, the task of computing one local zeta function $\Zeta_p(s)$ using
exact arithmetic is well-defined.
Regarding the behaviour of $\Zeta_p(s)$ under variation of~$p$,
in all three cases from above, sophisticated results from $p$-adic
integration imply the existence of schemes $\sV_1,\dotsc,\sV_r$ and rational
functions $W_1,\dotsc,W_r \in \QQ(X,Y)$ such that for almost all primes~$p$,
\begin{equation}
  \label{eq:intro_denef}
  \Zeta_p(s) = \sum_{i=1}^r \# \sV_i(\FF_p) \dtimes W_i(p,p^{-s});
\end{equation}
for more details, see Theorem~\ref{thm:denef_formulae} below.
While constructive proofs of \eqref{eq:intro_denef} are known, they are
generally impractical due to their reliance on resolution of singularities.

\paragraph{Previous work: computing topological zeta functions.}
In \cites{topzeta,topzeta2,unipotent}, the author developed practical methods
for computing so-called topological zeta functions associated with the above
counting problems;
these zeta functions are derived from generic local ones by means of a termwise
limit ``$p \to 1$'' applied to a formula \eqref{eq:intro_denef}.
Due to their reliance on non-degeneracy conditions for associated
families of polynomials, the author's methods for computing topological zeta
functions do not apply in all cases.
However, whenever they are applicable, as we will explain below, they come close
to producing an \itemph{explicit} formula~\eqref{eq:intro_denef}. 

\paragraph{Computing generic local zeta functions.}
In general, we understand the task of computing $\Zeta_p(s)$ for almost~$p$ to 
be the explicit construction of $\sV_i$ and $W_i$ as in \eqref{eq:intro_denef}.
While this seems to be the only adequate general notion of ``computing''
generic local zeta functions, we will often be more ambitious in practice.

\begin{prb*}
  Decide if there exists $W \in \QQ(X,Y)$ such that $\Zeta_p(s) = W(p,p^{-s})$ for almost all primes~$p$;
  in that case, we call $(\Zeta_p(s))_{p\text{ prime}}$ \emph{uniform}.
  Find $W$ if it exists.
\end{prb*}

The term ``uniformity'' is taken from \cite[\S 1.2.4]{dSW08}.
In practice, a weaker, non-con\-struc\-tive form of the Uniformity Problem which
merely asks for the existence of $W$ as above is often easier to solve.
For example, if $\Zeta_p(s)$ is the zeta function enumerating subgroups (or
normal subgroups)
of finite index in the free nilpotent pro-$p$ group of some fixed finite rank
(independent of~$p$) and class~$2$, then $(\Zeta_p(s))_{p\text{
    prime}}$ is shown to be uniform 
in \cite[Thm~2]{GSS88} even though no explicit construction of a rational function $W$ is given.

For many cases of interest, a rational function~$W$ as in the Uniformity Problem
exists, see e.g.\ most examples in~\cite{dSW08}.
However, no conceptual explanation as to why this is so seems to be known beyond
explicit computations.

Woodward~\cite{Woo05} used computer-assisted calculations to
solve the Uniformity Problem for a large number of subalgebra and ideal zeta
functions of nilpotent Lie algebras.
Unfortunately, few details on his computations are available, rendering them
rather difficult to reproduce.

\paragraph{Results.}
While explicit formulae \eqref{eq:intro_denef} have been obtained for specific
examples and even certain infinite families of these,
all known general constructions of $\sV_i$ and $W_i$
as in~\eqref{eq:intro_denef} are impractical.
In full generality, we thus regard the Uniformity Problem as too ambitious a task.
In the present article, we extend the author's work on explicit,
combinatorially defined formulae \eqref{eq:intro_denef} (see
\cite{topzeta,topzeta2,unipotent}) in order to provide practical solutions to
the Uniformity Problem in fortunate cases.
We will also consider computations of generic local zeta functions in cases where no $W$ as above exists.

As the following list illustrates, the method developed here can be used to
compute a substantial number of interesting new examples of generic local zeta functions:
\begin{itemize}
\item
  We completely determine the generic local representation zeta functions
  associated with unipotent algebraic groups of dimension at most six
  (\S\ref{app:reps}, Table~\ref{t:reps6}).
\item
  We compute the generic local subalgebra zeta functions associated with
  $\Gl_2(\QQ)$;
  this constitutes only the second instance (after $\mathfrak{sl}_2(\QQ)$) where
  such zeta functions associated with an insoluble Lie algebra have been
  computed (\S\ref{ss:gl2}, Theorem~\ref{thm:gl2}).
\item
  We compute the generic local submodule zeta functions for the natural action
  of the group of upper unitriangular integral $n\times n$-matrices 
  (or, equivalently, the nilpotent associative algebra of strictly upper
  triangular integral $n\times n$-matrices) for $n \le 5$ (\S\ref{ss:Un}, Theorem~\ref{thm:Un}). 
\item
  We compute the graded subalgebra and ideal zeta functions associated with
  $\QQ$-forms of each of the $26$ ``fundamental graded'' Lie algebras
  of dimension at most six over $\CC$ (\S\ref{app:graded}, Tables~\ref{t:grid}--\ref{t:grsub}).
\end{itemize}

\paragraph{Outline.}
In \S\ref{s:established}, we recall definitions of the subobject and
representation zeta functions of concern to us.
In~\S\ref{s:graded}, as a variation of established subalgebra and ideal zeta
functions, we discuss graded versions of these zeta functions.
In~\S\ref{s:explicit_formulae}, we consider formulae such as
\eqref{eq:intro_denef} both in theory and as provided by the author's previous
work.
Our work on the Uniformity Problem then proceeds in two steps.
First, in \S\ref{s:count}, we consider the symbolic determination of numbers 
such as the $\#\sV_i(\FF_p)$ in \eqref{eq:intro_denef} as a function of $p$.
Thereafter, in~\S\ref{s:ratfun}, we discuss the explicit computation of the
rational functions $W_i$ as provided by \cite{topzeta,topzeta2,unipotent};
a key role will be played by algorithms of
Barvinok et al.\ \cites{Bar94,BP99,BW03} surrounding generating functions of
rational polyhedra.
In~\S\ref{s:reduced}, we consider ``reduced representation zeta functions'' in
the spirit of Evseev's work \cite{Evs09};
while these functions turn out to be trivial, they provide us with a
simple necessary condition for the correctness of calculations.
Finally, examples of generic local zeta functions are the subject of
\S\S\ref{app:reps}--\ref{app:graded}.

\subsection*{Acknowledgement}
I would like to thank Christopher Voll for interesting discussions.

\subsection*{\textnormal{\textit{Notation}}}

The symbol ``$\subset$'' indicates not necessarily proper inclusion.
For the remainder of this article, let $k$ be a number field with ring of
integers $\fo$.
We write $\Places_k$ for the set of non-Archimedean places of $k$.
For $v\in \Places_k$, we denote by $k_v$ the $v$-adic completion of $k$ and
by $\fo_v$ the valuation ring of $k_v$.
We further let $\fp_v \in \Spec(\fo)$ denote the prime ideal corresponding to $v \in
\Places_k$
and write $q_v = \card{\fo/\fp_v}$.
Finally, we let $\abs{\dtimes}_v$ denote the absolute value on $k_v$
with $\abs{\pi}_v = q_v^{-1}$ for $\pi \in \fp_v^{\phantom 1}\!\setminus\fp_v^2$.

We let $\QQ_p$ and $\ZZ_p$ denote the field of $p$-adic numbers and ring of
$p$-adic integers, respectively.
By a $p$-adic field, we mean a finite extension of $\QQ_p$.
For a $p$-adic field $K$, let $\fO_K$ denote the valuation ring of $K$ and let
$\fP_K$ denote the maximal ideal of $\fO_K$.
We write $q_K = \card{\fO_K/\fP_K}$.

\section{Established zeta functions of groups, algebras, and modules}
\label{s:established}

\subsection{Subalgebra and ideal zeta functions}
\label{ss:subalgebras}

Following \cite{GSS88} (cf.~\cite[\S 2.1]{topzeta}),
for a commutative ring $R$ and a (possibly non-associative) $R$-algebra $\sA$,
we formally define the \emph{subalgebra zeta function} of $\sA$ to be 
\[
\zeta_{\sA}^\le (s) = \sum_{\sU} \idx{\sA:\sU}^{-s},
\]
where $\sU$ ranges over the $R$-subalgebras of $\sA$ such that the $R$-module
quotient $\sA/\sU$ has finite cardinality $\idx{\sA:\sU}$.
Additional hypotheses (which are satisfied in our applications
below) ensure that the number $a_n(\sA)$ of $R$-subalgebras of index $n$ of
$\sA$ is finite for
every $n \ge 1$ and, in addition, $a_n(\sA)$ grows at most polynomially as a function of $n$.
Under these assumptions, $\zeta_{\sA}^\le(s)$  defines an analytic function
in some complex right half-plane.

Now let $\cA$ be a finite-dimensional possibly non-associative $k$-algebra,
where $k$ is a number field as above.
Choose an $\fo$-form $\sA$ of $\cA$ whose underlying $\fo$-module is free.
For $v\in \Places_k$, let $\sA_v := \sA \otimes_{\fo} \fo_v$, regarded as an
$\fo_v$-algebra.
We then have an Euler product
$\zeta_{\sA}^\le(s) = \prod_{v\in \Places_k} \zeta_{\sA_v}^\le(s)$;
see \cite[Lem.~2.3]{topzeta}.
While the \emph{global zeta function} $\zeta_{\sA}^\le(s)$ is an analytic object, as
we will recall below, the \emph{local zeta functions} $\zeta_{\sA_v}^\le(s)$ are
algebro-geometric in nature.
Note that up to discarding finitely many elements, the
family $\bigl(\zeta_{\sA_v}^\le(s)\bigr)_{v\in \Places_k}$ of local zeta
functions only depends on $\cA$ and not on the $\fo$-form $\sA$.

If, instead of enumerating subalgebras, we consider ideals, we obtain
the global and local \emph{ideal zeta functions} $\zeta_{\sA}^\normal(s)$ and
$\zeta_{\sA_v}^\normal(s)$ of $\sA$, respectively;
these are also linked by an Euler product as above.

\subsection{Submodule zeta functions}
\label{ss:submodules}

Submodule zeta functions were introduced by Solomon~\cite{Sol77} in the
context of semisimple associative algebras.
In the following generality (based upon \cite[\S 2.1]{topzeta}), they 
also generalise ideal zeta functions of algebras.
For a commutative ring $R$, an $R$-module $\sV$, and a set $\mathsf\Omega
\subset \End_R(\sV)$, we formally define the \emph{submodule zeta function} of
$\mathsf\Omega$ acting on $\sV$ to be
\[
\zeta_{\mathsf\Omega \acts \sV}(s) = \sum_{\sU} \idx{\sV:\sU}^{-s},
\]
where $\sU$ ranges over the $\mathsf\Omega$-invariant $R$-submodules of $\sV$ with finite
$R$-module quotients~$\sV/\sU$.
The name ``submodule zeta function'' is justified by the observation that we are
free to replace $\mathsf\Omega$ by its enveloping unital associative algebra within $\End_R(\sV)$.

Let $V$ be a finite-dimensional vector space over $k$ and let $\Omega
\subset \End_k(V)$ be given.
Choose an $\fo$-form $\sV$ of $V$ which is free as an $\fo$-module.
Furthermore, choose a finite set $\mathsf \Omega \subset \End_{\fo}(\sV)$ which
generates the same unital subalgebra of $\End_k(V)$ as $\Omega$.
Writing $\sV_v = \sV \otimes_{\fo}\fo_v$,
we obtain an Euler product
$\zeta_{\mathsf\Omega \acts \sV}(s) = \prod_{v\in \Places_k} \zeta_{\mathsf \Omega \acts \sV_v}(s)$;
as in \S\ref{ss:subalgebras}, up to discarding finitely many factors, the collection of local zeta
functions on the right-hand side of this product only depends on $(\Omega, V)$
and not on the choice of $(\mathsf \Omega,\sV)$.

\subsection{Representation zeta functions associated with unipotent groups}
\label{ss:reps}

Following \cite{SV14,HMRC15},
for a topological group $G$, we let $\tilde r_n(G)$ denote the number of
continuous irreducible representations $G \to \GL_n(\CC)$ counted up to
equivalence and tensoring with continuous $1$-dimensional complex
representations.
We formally define the \emph{(twist) representation zeta function} of $G$ to be
\[
\zeta_G^{\wirr}(s) = \sum_{n=1}^\infty \tilde r_n(G) n^{-s}.
\]

Let $\GG$ be a unipotent algebraic group over $k$;
see \cite[Ch.~IV]{DG70} for background.
Let $\Uni_n$ denote the group scheme of upper unitriangular
$n\times n$-matrices.
We choose an embedding of $\GG$ into some $\Uni_n \otimes k$ and let $\G \le \Uni_n
\otimes \fo$ be the associated $\fo$-form of $\GG$
(viz.\ the scheme-theoretic closure of $\GG$ within $\Uni_n\otimes \fo$).
By \cite[Prop.\ 2.2]{SV14},
the Euler product $\zeta_{\G(\fo)}^{\wirr}(s) = \prod_{v\in \Places_k}
\zeta_{\G(\fo_v)}^{\wirr}(s)$ connects the representation zeta function of the
discrete group $\G(\fo)$ and those of the pro-$p_v$ groups
$\G(\fo_v)$,
where $p_v$ is the rational prime contained in~$\fp_v$.

\subsection{Motivation: zeta functions of nilpotent groups}
\label{ss:groups}

We briefly recall the original motivation for the study of subalgebra and ideal
zeta function from \cite{GSS88} and representation zeta functions in
\cite{Vol10,HMRC15} (cf.~\cite{SV14}).
For any topological group~$G$, the \emph{subgroup zeta function}
$\zeta_G^\le(s)$ (resp.\ the \emph{normal subgroup zeta function}
$\zeta_G^\normal(s)$) of $G$ is formally defined to be $\sum_H \idx{G:H}^{-s}$,
where $H$ ranges of the closed subgroups (resp.\  closed normal subgroups) of
$G$ of finite index.
Let $G$ be a discrete torsion-free finitely generated nilpotent group.
Then $\zeta_G^\le(s) = \prod_p \zeta_{\hat G_p}^\le(s)$, where $p$ ranges over
primes and $\hat G_p$ denotes the pro-$p$ completion of $G$. 
Moreover, the global and local zeta functions $\zeta_G^\le(s)$ and $\zeta_{\hat
  G_p}^\le(s)$ all converge in some complex right half-plane.
Analogous statements hold for the normal subgroup and representation zeta
functions of $G$.

Apart from finitely many exceptions, the local subobject and representation zeta
functions attached to $G$ are special cases of those in
\S\S\ref{ss:subalgebras}--\ref{ss:submodules}.
Recall that the Mal'cev correspondence attaches a finite-dimensional nilpotent
Lie $\QQ$-algebra,  $\cL$ say, to~$G$.
As explained in \cite{GSS88},
if $\sL$ is a $\ZZ$-form of $\cL$ which is
finitely generated as a $\ZZ$-module, then
$\zeta_{\hat G_p}^\le(s) = \zeta_{\sL \otimes \ZZ_p}^\le(s)$ and $\zeta_{\hat G_p}^\normal(s)
= \zeta_{\sL \otimes \ZZ_p}^\normal(s)$ for almost all~$p$.
Moreover, if $\GG$ is the unipotent algebraic group over $\QQ$ with Lie algebra $\cL$ and
if $\G$ is a $\ZZ$-form of $\GG$ arising from an embedding $\GG \le \Uni_n \otimes \QQ$,
then $\hat G_p = \G(\ZZ_p)$ for almost all primes~$p$ (see \cite{SV14}).

\section{Graded subalgebra and ideal zeta functions}
\label{s:graded}

In this section, we introduce variations of the subalgebra and ideal zeta functions from
\S\ref{ss:subalgebras} which take into account a given additive grading of the
algebra under consideration.

\subsection{Definitions}
\label{ss:graded}

Let $R$ be a commutative ring and let $\sA$ be a possibly non-associative
$R$-algebra.
Further suppose that we are given a direct sum decomposition
\begin{equation}
  \label{eq:decomp}
  \sA = \sA_1 \oplus \dotsb \oplus \sA_r
\end{equation}
of $R$-modules.
As usual, an $R$-submodule $\sU \le \sA$ is \emph{homogeneous} if it
decomposes as $\sU = \sU_1 \oplus \dotsb \oplus \sU_r$ for $R$-submodules $\sU_i
\le \sA_i$ for $i = 1,\dotsc,r$.
We formally define the \emph{graded subalgebra zeta function} of $\sA$ with
respect to the decomposition \eqref{eq:decomp} to be
\[
\zeta_{\sA}^{\gr\le}(s) = \sum_{\sU} \idx{\sA:\sU}^{-s},
\]
where $\sU$ ranges over the homogeneous $R$-subalgebras of $\sA$ such that
the $R$-module quotient $\sA/\sU$ is finite.
We also define the \emph{graded ideal zeta function} $\zeta_{\sA}^{\gr
  \normal}(s)$ in the evident way.
Note that we do not require \eqref{eq:decomp} to be compatible with the given
multiplication in $\sA$.
As in the non-graded context, given a finite-dimensional possibly
non-associative $k$-algebra $\cA$ together with a vector space decomposition
$\cA = \cA_1\oplus \dotsb \oplus \cA_r$,
we obtain associated global and local graded subalgebra and ideal zeta functions
generalising those from \S\ref{ss:subalgebras} by choosing appropriate $\fo$-forms.

\begin{ex}
  Let $\sA = \ZZ^{n_1} \oplus \dotsb \oplus \ZZ^{n_r}$ be regarded as an abelian
  Lie $\ZZ$-algebra for $n_1, \dotsc, n_r \ge 1$.
  It follows from the well-known non-graded case ($r = 1$; see \cite[Prop.~1.1]{GSS88})
  that $\zeta_{\sA}^{\gr\le}(s) = \prod_{i=1}^r \prod_{j=0}^{n_i-1} \zeta(s-j)$, 
  where $\zeta$ denotes the Riemann zeta function.
\end{ex}

\begin{rem}
  Let $R$, $\sV$, and $\Omega \subset \End_R(\sV)$ be as in
  \S\ref{ss:submodules}.
  Fix an $R$-module decomposition $\sV = \sV_1 \oplus \dotsb \oplus \sV_r$.
  In analogy to the above, we define the \emph{graded submodule zeta
    function} $\zeta_{\Omega \acts \sV}^{\gr}(s)$ of $\Omega$ by enumerating
  homogeneous $\Omega$-invariant $R$-submodules of $\sV$.
\end{rem}

\subsection{Reminder: graded Lie algebras}
\label{s:graded_Lie}

Let $R$ be a commutative Noetherian ring.
All Lie $R$-algebras in the following are assumed to be finitely generated as
$R$-modules.
Recall that an \emph{($\NN$-)graded Lie algebra} over $R$ is a Lie
$R$-algebra $\fg$ together with a decomposition $\fg = \bigoplus_{i=1}^\infty
\fg_i$ into $R$-submodules $\fg_i \le \fg$ such that $[\fg_i,\fg_j] \le
\fg_{i+j}$ for all $i,j \ge 1$.
Since $\fg$ is Noetherian as an $R$-module, $\fg_i = 0$ for sufficiently large $i$
whence such an algebra $\fg$ is nilpotent.
Following \cite[\S 2, Def.\ 1]{Kuz99}, we say that $\fg$ is \emph{fundamental}
if $[\fg_1,\fg_i] = \fg_{i+1}$ for all $i \ge 1$.
If $R = \RR$ or $R = \CC$, then the fundamental graded Lie $R$-algebras of
dimension at most~$7$ have been classified in \cite{Kuz99}.
In the case of dimension at most $5$, the classification in \cite{Kuz99} 
is in fact valid over any field of characteristic zero; see \cite[\S 2.2, Rem.\ 1]{Kuz99}.

Let $\fg$ be a finite-dimensional Lie algebra over a field.
Let $\fg = \fg^1 \supset \fg^2 \supset \dotsb$ be the lower central series
of~$\fg$. 
As is well-known, commutation in $\fg$ endows $\gr(\fg) := \bigoplus_{i=1}^\infty
\fg^i/\fg^{i+1}$ with the structure of a graded Lie algebra;
note that $\gr(\fg)$ is fundamental by construction. 
We call $\gr(\fg)$ the \emph{graded Lie algebra associated with $\fg$}.

The study of graded zeta functions seems quite natural in the context of
nilpotent Lie algebras.
It would be interesting to find group-theoretic interpretations,
in the spirit of~\S\ref{ss:groups}, of such zeta functions associated with
graded nilpotent Lie algebras.

\subsection{Graded subobject zeta functions as $p$-adic integrals}

In order to carry out explicit computations of local graded subobject zeta
functions, we will use the following straightforward variation of \cite[\S
5]{dSG00}; we only spell out the enumeration of graded subalgebras, the case of
ideals being analogous.

\begin{thm}
  \label{thm:graded_int}
  Let $\fO$ be the valuation ring of a non-Archimedean local field.
  Let $\sA$ be a (possibly non-associative) $\fO$-algebra whose underlying
  $\fO$-module is free with basis $\bm a = (a_1,\dotsc, a_d)$.
  Let $0 = \beta_1 < \dotsb < \beta_{r+1} = d$ and
  decompose $\sA = \sA_1 \oplus \dotsb \oplus \sA_r$
  by setting $\sA_i = \fO a_{1+\beta_{i}} \oplus \dotsb \oplus \fO
  a_{\beta_{i+1}}$.
  
  Let $T$ denote the $\fO$-module of block diagonal upper triangular $d \times
  d$-matrices over $\fO$ with block sizes $\beta_2-\beta_1, \dotsc, \beta_{r+1} -
  \beta_{r}$.
  Let $M(\XX)$  be the generic matrix of the same shape over $\fO$;
  in other words,
  \[
  M(\XX) = 
  \mathrm{diag}\left(
  \begin{bmatrix}
    X_{1,1} & \hdots & X_{1,\beta_2}\\
    & \ddots & \vdots \\
    & & X_{\beta_2,\beta_2}
  \end{bmatrix},
  \dotsc,
  \begin{bmatrix}
    X_{1+\beta_r,1+\beta_r} & \hdots & X_{1+\beta_r,d}\\
    & \ddots & \vdots\\
    & & X_{d,d}
  \end{bmatrix}
  \right).
  \]

  Let $R = \fO[\XX]$ 
  and let $\star\colon R^d \times R^d \to R^d$ be
  induced via base extension by multiplication in $\sA$ with respect to $\bm a$.
  Let $F \subset R$ consist of all entries of all $d$-tuples
  $(M_i(\XX) \star M_j(\XX))
  \adj(M(\XX))$ for $1\le i,j\le d$, where $\adj(M(\XX))$ denotes the adjugate
  matrix of $M(\XX)$
  and $M_i(\XX)$  the $i$th row of $M(\XX)$.
  Define $V = \{ \xx \in T : \divides{ \det(M(\xx)) }{ f(\xx) } \text{ for all }
  f \in F\}$.
  Let $q$ denote the residue field size of $\fO$, let $\mu$ denote the
  normalised Haar measure on $T \approx \fO^{\sum_{i=1}^r
    \binom{\beta_{i+1}-\beta_i + 1}2}$, and let $\abs{\,\cdotp}$ denote the absolute
  value on $K$ such that $\abs{\pi} = q^{-1}$ for any uniformiser $\pi$.
  Then
  \begin{equation}
    \label{eq:graded_int}
  \zeta_{\sA}^{\gr\le}(s) = (1-q^{-1})^{-d} \int_V 
  \prod_{i=1}^r \prod_{j=1}^{\beta_{i+1}-\beta_i}
  \abs{ x_{j+\beta_i,j+\beta_i}}^{s-j} \dd\mu(\xx). \qed
  \end{equation}
\end{thm}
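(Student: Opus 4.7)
The plan is to adapt the argument of \cite[\S 5]{dSG00}, which establishes the non-graded analogue of \eqref{eq:graded_int}, by replacing the full upper triangular parameterisation used there with one by the block-diagonal upper triangular space~$T$ reflecting~\eqref{eq:decomp}.

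First, I would parametrise. A finite-index homogeneous $\fO$-submodule $\sU \le \sA$ decomposes uniquely as $\sU = \sU_1 \oplus \dotsb \oplus \sU_r$ with $\sU_i \le \sA_i$ of finite index; applying Hermite normal form in each block yields a bijection between such $\sU$ and the left orbits of $T \cap \GL_d(\fO)$ on the subset of $T$ with nonzero determinant. For any representative $M(\xx)$ of $\sU$, one has $\idx{\sA:\sU}^{-s} = \abs{\det M(\xx)}^s = \prod_{i,j} \abs{x_{j+\beta_i,j+\beta_i}}^s$.

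Next, I would translate the subalgebra condition. Since the rows $M_1(\xx), \dotsc, M_d(\xx)$ generate $\sU$ as an $\fO$-module, $\sU$ is closed under $\star$ iff each product $M_i(\xx) \star M_j(\xx)$ lies in $\sU$, i.e.\ $(M_i(\xx) \star M_j(\xx)) M(\xx)^{-1} \in \fO^d$. Substituting $M(\xx)^{-1} = \det(M(\xx))^{-1} \adj(M(\xx))$ turns this into the divisibility condition $\divides{\det(M(\xx))}{f(\xx)}$ for all $f \in F$, which is precisely $\xx \in V$.

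Finally, I would unfold the sum into the claimed integral by computing the Haar measure of orbits. A direct Jacobian computation within a single upper triangular block of size $n$ shows that, for an invertible upper triangular $M$ with diagonal $x_{11}, \dotsc, x_{nn}$, the orbit of $M$ under left multiplication by upper triangular matrices with unit diagonal and $\fO$-entries above has Haar measure $(1-q^{-1})^n \prod_{j=1}^n \abs{x_{jj}}^j$. Since $T \cap \GL_d(\fO)$ acts block-diagonally on $T$, the orbit of $M(\xx)$ has total measure $(1-q^{-1})^d \prod_{i,j} \abs{x_{j+\beta_i,j+\beta_i}}^j$; moreover, the integrand $\prod_{i,j} \abs{x_{j+\beta_i,j+\beta_i}}^{s-j}$ is constant on orbits since the diagonal of $M(\xx)$ is preserved. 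Dividing $\idx{\sA:\sU}^{-s}$ by the orbit measure and integrating over $V$ therefore recovers \eqref{eq:graded_int}. The principal technical point is the orbit-measure bookkeeping in this last step; however, because \eqref{eq:decomp} induces no interaction between blocks at either the level of the parameterisation of homogeneous submodules or of the Haar measure on $T$, the classical argument of \cite[\S 5]{dSG00} applies verbatim within each block.
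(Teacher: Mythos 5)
Your proposal is correct and is precisely the argument the paper intends: the theorem is stated with no written proof beyond being a ``straightforward variation of \cite[\S 5]{dSG00}'' (cf.\ Remark~\ref{rem:cone_conditions}), and your block-wise Hermite-normal-form parameterisation of homogeneous subalgebras, the adjugate translation of the closure condition, and the orbit-measure Jacobian computation within each block are exactly that variation. The only point to phrase carefully is that ``unit diagonal'' in your Jacobian step must mean diagonal entries in $\fO^\times$ (i.e.\ the full group of invertible upper triangular matrices over $\fO$), which is what your orbit measure $(1-q^{-1})^{n}\prod_{j}\abs{x_{jj}}^{j}$ already reflects.
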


\begin{rem}
  \label{rem:cone_conditions}
  As in \cite[\S 5]{dSG00},
  a matrix $\xx \in T$ belongs to
  the set $V$ in Theorem~\ref{thm:graded_int}
  if and only if its row span is a subalgebra of~$\fO^d$,
  regarded as an algebra via the given identification $\sA = \fO^d$.
\end{rem}

The following illustrates Theorem~\ref{thm:graded_int} for an infinite family of
graded algebras.

\begin{prop}
  \label{prop:maximal_class}
  Let $n \ge 1$ and
  let $\fm(n) = \fm_1(n) \oplus \dotsb \oplus \fm_n(n)$ be the graded Lie
  $\ZZ$-algebra of additive rank $n+1$ and nilpotency class $n$ with $\fm_1(n) = \ZZ e_0 \oplus \ZZ e_1$, $\fm_i(n) = \ZZ
  e_i$ for $i = 2,\dotsc,n$, and non-trivial commutators $[e_0,e_i] =
  e_{i+1}$ for $1 \le i \le n-1$.
  Let $k$ be a number field with ring of integers $\fo$.
  Then for each $v\in \Places_k$, 
  \[
  \zeta_{\fm(n)\otimes \fo_v}^{\gr\normal}(s) =
  1/\bigl((1-q_v^{-s})(1-q_v^{1-s}) (1-q_v^{-3s}) (1-q_v^{-4s}) \dotsb
  (1-q_v^{-(n+1)s})\bigr),
  \]
  where $\fm(n)\otimes\fo_v$ is regarded as an $\fo_v$-algebra.
  Denoting the Dedekind zeta function of $k$ by $\zeta_k(s)$, we thus have
  $$\zeta_{\fm(n)\otimes \fo}^{\gr\normal}(s) =
  \zeta_k(s)\zeta_k(s-1)\zeta_k(3s)\zeta_k(4s)\dotsb\zeta_k\bigl((n+1)s\bigr).$$
\end{prop}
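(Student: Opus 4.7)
The plan is to compute the local zeta function $\zeta^{\gr\normal}_{\fm(n)\otimes\fo_v}(s)$ by direct enumeration of homogeneous ideals of finite index in $\fm(n)\otimes\fo_v$, and then to deduce the global identity from the Euler product for graded ideal zeta functions (which extends from \S\ref{ss:subalgebras} by the same CRT-based argument). Fix $v \in \Places_k$, write $\fO = \fo_v$, $q = q_v$, and choose a uniformiser $\pi$. Any homogeneous ideal $\sU$ of finite index decomposes as $\sU = \sU_1 \oplus \cdots \oplus \sU_n$ with $\sU_1$ a full-rank sublattice of $\fO e_0 \oplus \fO e_1$ and $\sU_i = \pi^{a_i}\fO e_i$ for $i \ge 2$. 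I would parameterise $\sU_1$ via its Hermite normal form, as the $\fO$-span of $\pi^a e_0 + c e_1$ and $\pi^b e_1$ with $a, b \ge 0$ and $c$ ranging over a system of coset representatives of $\fO/\pi^b\fO$; then $\idx{\fm(n)\otimes\fO : \sU} = q^{a + b + a_2 + \cdots + a_n}$.

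The step I expect to be the main obstacle is translating the ideal condition into explicit arithmetic constraints on the data $(a, b, c, a_2, \ldots, a_n)$. Since the only non-trivial brackets in $\fm(n)$ are $[e_0, e_i] = e_{i+1}$ for $1 \le i \le n - 1$, a direct calculation of $[\fm_1, \sU_i]$ for $1 \le i \le n-1$ reduces $[\fm(n)\otimes\fO, \sU] \subseteq \sU$ to
\[
a_2 \le \min(a, b), \qquad a_{i+1} \le a_i \text{ for } 2 \le i \le n - 1, \qquad \pi^{a_2} \mid c,
\]
the last being the subtle one: it arises from $[e_0, \pi^a e_0 + c e_1] = c\, e_2 \in \sU_2$. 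One must also check that the potential constraints from $[\fm_j, \sU_i]$ with $j \ge 2$ are either trivial (when neither factor is a multiple of $e_0$) or already implied by the chain $a_n \le \cdots \le a_2 \le a$.

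It then remains to evaluate the resulting sum. The number of admissible residues $c \in \fO/\pi^b\fO$ equals $q^{b - a_2}$, so
\[
\zeta^{\gr\normal}_{\fm(n)\otimes\fo_v}(s) = \sum_{a,\, b \,\ge\, a_2 \,\ge\, \cdots \,\ge\, a_n \,\ge\, 0} q^{b - a_2 - s(a + b + a_2 + \cdots + a_n)}.
\]
Summing geometrically over $a$ and $b$ produces the factors $(1-q^{-s})^{-1}$ and $(1-q^{1-s})^{-1}$ together with a residual $q^{-2s a_2}$; the remaining sum over chains $a_n \le \cdots \le a_2$ of $q^{-s(3a_2 + a_3 + \cdots + a_n)}$ factorises via the standard substitution $b_j := a_j - a_{j+1}$ (with $b_n := a_n$), under which the exponent becomes $\sum_{j=2}^n (j+1)\, b_j$ and yields $\prod_{j=2}^n (1 - q^{-(j+1)s})^{-1}$. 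Combining all factors gives the stated local formula, and the global identity then follows from the Euler product together with $\zeta_k(s) = \prod_v (1 - q_v^{-s})^{-1}$.
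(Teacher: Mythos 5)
Your proof is correct, and it reaches the formula by a more elementary route than the paper. The paper's proof invokes the graded integral representation (Theorem~\ref{thm:graded_int} together with Remark~\ref{rem:cone_conditions}, applied to ideals rather than subalgebras), which writes $\zeta^{\gr\normal}_{\fm(n)\otimes\fo_v}(s)$ as a $p$-adic integral over triples $(x_1,x_2,x_3)$ and scalars $(y_1,\dotsc,y_{n-1})$ subject to exactly the divisibility conditions you derived: the chain $y_{n-1}\mid\dotsb\mid y_1\mid x_1,x_2,x_3$ is your $a_n\le\dotsb\le a_2\le\min(a,b)$ together with $\pi^{a_2}\mid c$. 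It then evaluates the integral by the multiplicative change of variables $x_i\mapsto x_iy_1\dotsb y_{n-1}$, $y_j\mapsto y_j\dotsb y_{n-1}$, whose Jacobian $y_1^3y_2^4\dotsb y_{n-1}^{n+1}$ is precisely the integral counterpart of your telescoping substitution $b_j=a_j-a_{j+1}$ producing the exponents $j+1$. You instead enumerate homogeneous ideals directly via Hermite normal form and sum the resulting geometric series by hand, in effect re-deriving the relevant special case of Theorem~\ref{thm:graded_int} instead of citing it. Your route is self-contained (no integral machinery, no normalising factor $(1-q_v^{-1})^{-n-1}$ to track), while the paper's phrasing has the advantage of exhibiting the computation as an instance of the toric-data integrals used throughout the article. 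Your treatment of the ideal conditions is complete — the brackets of $e_j$, $j\ge 2$, against the $e_0$-component only impose $a_{j+1}\le a$, which the chain already gives, and the count $q^{b-a_2}$ of admissible $c$ is right — and the concluding Euler-product step agrees with the paper's.
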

\begin{proof}
  It is an elementary consequence of Theorem~\ref{thm:graded_int} and
  Remark~\ref{rem:cone_conditions} (both applied to the enumeration of ideals
  instead of subalgebras) 
  that for any $v\in \Places_k$,
  \begin{align*}
  \zeta_{\fm(n)\otimes\fo_v}^{\gr\normal}(s) =\, & (1-q_v^{-1})^{-n-1} 
  \\ & \times \int_V
  \abs{x_{1}}_v^{s-1} \abs{x_{3}}_v^{s-2} \abs{y_1}_v^{s-1} \dotsb
  \abs{y_{n-1}}_v^{s-1} \dd\mu(x_1,x_2,x_3,y_1,\dotsc,y_{n-1}),
  \end{align*}
  where
  $
  V = \bigl\{ (x_1,x_2,x_3,y_1,\dotsc,y_{n-1}) \in \fo_v^{n+2} : 
  y_{n-1} \mid y_{n} \mid \dotsb \mid y_{1} \mid x_1,x_2,x_3 \bigr\}$;
  indeed, $(x_1,\dotsc,y_{n-1}) \in V$ if and only if the row span
  of $\mathrm{diag}( \bigl[\begin{smallmatrix} x_1 & x_2 \\ &  x_3\end{smallmatrix}\bigr],y_1,\dotsc,y_{n-1})$
  is an ideal of $\fm(n)\otimes \fo_v$ (identified with $\fo_v^{n+1}$ via $(e_0,\dotsc,e_n)$).
  Define a bianalytic bijection
  \begin{align*}
  \varphi\colon (k_v^\times)^{n+2} \to (k_v^\times)^{n+2},
  \quad
  (x_1,x_2,x_3,y_1,\dotsc,y_{n-1}) \mapsto
  (&x_{1} y_1\dotsb y_{n-1}, \,\,
  x_2 y_1\dotsb y_{n-1}, \\&
  x_3 y_1 \dotsb y_{n-1}, \\& 
  y_1\dotsb y_{n-1}, \\&
  y_2\dotsb y_{n-1}, \dotsc, y_{n-1});
  \end{align*}
  note that the Jacobian determinant of $\varphi$ is
  $\det \varphi'(x_1,x_2,x_3,y_1,\dotsc,y_{n-1}) = y_1^3 y_2^4 \dotsb 
  y_{n-1}^{n+1}$.
  Since $V \cap (k_v^\times)^{n+2} = \varphi(\fo_v^{n+2} \cap
  (k_v^\times)^{n+2})$
  and $\mu(k_v^n\setminus (k_v^\times)^n) = 0$,
  by performing a change of variables using $\varphi$
  and using the well-known fact $\int_{\fo_v} \abs{z}_v^s \dd\mu(z) = (1-q_v^{-1})/(1-q_v^{-1-s})$,
  \begin{align*}
  \zeta_{\fm(n)\otimes\fo_v}^{\gr\normal}(s)
  & = (1-q_v^{-1})^{-n-1} \int_{\fo_v^{n+2}}
  \abs{x_1}^{s-1}_v \abs{x_3}^{s-2}_v  \abs{y_1}^{3s-1}_v
  \dotsb \abs{y_{n-1}}^{(n+1)s-1}_v \dd\mu(x_1,\dotsc,y_{n-1}) \\
  & = 1/\bigl((1-q_v^{-s})(1-q_v^{1-s}) (1-q_v^{-3s}) (1-q_v^{-4s}) \dotsb (1-q_v^{-(n+1)s})\bigr).
\end{align*}
The final claim follows by taking the product over all $v \in \Places_k$.
\end{proof}

\begin{rem*}
  To the author's knowledge, not a single example of a non-graded
  subobject zeta function of a nilpotent Lie  algebra of nilpotency class $\ge 5$ is
  known explicitly.
\end{rem*}

Integrals such as those in \eqref{eq:graded_int} are  special cases of those
associated with ``toric data'' in~\cite[\S 3]{topzeta2}.
Hence, the author's methods for manipulating such integrals as developed
in~\cite{topzeta2} apply directly without modification, as do the techniques  
explained below.

\section{Explicit formulae}
\label{s:explicit_formulae}

\subsection{Theory: local zeta functions of Denef type}

The following is a variation of the terminology employed in \cite[\S
5.2]{topzeta}.
As before, we assume that $k$ is a fixed number field.
Suppose that we are given a collection $\Zeta = (\Zeta_K(s))_K$ of analytic functions
of a complex variable $s$ (each defined in some right half-plane)
indexed by $p$-adic fields $K \supset k$ (up to
$k$-isomorphism). 
We say that $\Zeta$ is of \emph{Denef type} if there exist a finite set
$S\subset \Places_k$, $k$-varieties $V_1,\dotsc,V_r$, 
and rational functions $W_1,\dotsc,W_r \in \QQ(X,Y)$ such
that for all $v\in \Places_k\setminus S$ and all finite extensions $K/k_v$,
\begin{equation}
  \label{eq:main_denef}
  \Zeta_K(s) = \sum_{i=1}^r \# \bar V_i(\fO_K/\fP_K) \dtimes W_i(q_K^{\phantom {-s}}\!\!,q_K^{-s})
\end{equation}
is an identity of analytic functions;
here, we wrote $\bar V_i = \sV_i \otimes_{\fo} \fo/\fp_v$ for a fixed but
arbitrary $\fo$-model $\sV_i$ of $V_i$.

The following result formalises our discussion surrounding
\eqref{eq:intro_denef} from the introduction;
it summarises \cite[\S\S 2--3]{dSG00} (cf.~\cite[Thm~5.16]{topzeta})
and \cite[Thm~A]{SV14}.

\begin{thm}
  \label{thm:denef_formulae}
  Let $\bigl(\Zeta_K(s)\bigr)_K$ be one of the following collections of local
  zeta functions indexed by $p$-adic fields $K\supset k$ (up to $k$-isomorphism).
  \begin{enumerate}
  \item
    $\Zeta_K(s) = \zeta_{\sA\otimes_{\fo}\fO_K}^\le(s)$ or $\Zeta_K(s) =
    \zeta_{\sA\otimes_{\fo}\fO_K}^\normal(s)$
    (resp.\ $\Zeta_K(s) = \zeta_{\sA\otimes_{\fo}\fO_K}^{\gr\le}(s)$
    or $\Zeta_K(s) = \zeta_{\sA\otimes_{\fo}\fO_K}^{\gr\normal}(s)$),
    where $\sA$ is an $\fo$-form of a finite-dimensional (possibly
    non-associative) $k$-algebra as in \S\ref{ss:subalgebras} or
    \S\ref{ss:graded}, respectively.
  \item
    $\Zeta_K(s) = \zeta_{\mathsf \Omega \acts (\sV \otimes_{\fo}{\fO_K})}(s)$, where $\mathsf\Omega$
    and $\sV$ are as in \S\ref{ss:submodules}.
  \item
    $\Zeta_K(s) = \zeta_{\G(\fO_K)}^{\wirr}(s)$, where $\G$ is an $\fo$-form of a
    unipotent algebraic group over $k$ as in \S\ref{ss:reps}.
  \end{enumerate}
  Then $\bigl(\Zeta_K(s)\bigr)_K$ is of Denef type.
\end{thm}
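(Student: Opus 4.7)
The plan is to handle all three cases by a common two-step strategy: first express each local zeta function $\Zeta_K(s)$ as a $p$-adic integral whose integrand and domain of integration are defined by polynomial data with coefficients in $\fo$, and then invoke Denef's theorem on rationality of $p$-adic integrals (with uniformity in the residue characteristic) to produce a formula of the shape \eqref{eq:main_denef}.

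For cases (i) and (ii) the required integral representations are close at hand. The subalgebra and ideal zeta functions of $\sA \otimes_{\fo} \fO_K$ are written as ``cone integrals'' in \cite[\S\S 2--3]{dSG00}, and the same Hermite-normal-form parameterisation underlies Theorem~\ref{thm:graded_int} for the graded variant. Submodule zeta functions are handled by an entirely analogous device, after converting $\mathsf\Omega$-invariance into finitely many polynomial divisibility conditions by letting each element of $\mathsf\Omega$ act on the generic Hermite matrix. In every such integral the integrand is a product of powers of $K$-absolute values of elements of $\fo[\XX]$, and the domain of integration is a semi-algebraic subset of some $\fO_K^N$ cut out by polynomial equalities and divisibilities with coefficients in $\fo$. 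Case (iii) is the substantive content of \cite[Thm~A]{SV14}: there a Howe/Kirillov-type orbit description of continuous irreducible representations of $\G(\fO_K)$ modulo twists reduces $\zeta_{\G(\fO_K)}^{\wirr}(s)$ to an integral of precisely the same general shape, now over a Lie-algebraic model of $\G$.

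The second step is a direct application of Denef's rationality theorem for $p$-adic integrals over such semi-algebraic sets. One chooses an embedded resolution of singularities of the relevant polynomials over some localisation $\fo[1/N]$; for any $v \in \Places_k$ with $v \nmid N$ and any finite extension $K/k_v$, pulling the integral back to the resolution and integrating stratum by stratum produces a finite expression
\[
\Zeta_K(s) = \sum_{i=1}^r \# \bar V_i(\fO_K/\fP_K) \dtimes W_i(q_K,q_K^{-s}),
\]
in which the $V_i$ come from strata of the exceptional divisor (intersected with the pre-image of the domain) and the $W_i$ are elementary rational functions determined in closed form by the numerical data of the resolution (multiplicities of the exceptional components and the exponents appearing in the integrand). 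The same $V_i$ and $W_i$ work uniformly for all $K \supset k_v$ because they are extracted from a single resolution defined over $\fo[1/N]$.

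In my view the main obstacle is the passage from the original counting problem to a $p$-adic integral in case~(iii): for the subalgebra, ideal, submodule, and graded variants this is an essentially direct linear-algebra manoeuvre, whereas the representation-theoretic translation behind \cite[Thm~A]{SV14} relies on a nontrivial $p$-adic Kirillov orbit method and a verification that the orbit description interacts well with $\fo$-structures for almost all~$v$. In a self-contained write-up I would therefore cite \cite[Thm~A]{SV14} as a black box for (iii) and give the straightforward derivation from \cite[\S\S 2--3]{dSG00} and Theorem~\ref{thm:graded_int} for (i) and~(ii).
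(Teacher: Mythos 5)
Your proposal is correct and follows essentially the same route as the paper, which proves the theorem simply by noting that it summarises \cite[\S\S 2--3]{dSG00} (cone integrals plus resolution of singularities, covering the subalgebra, ideal, graded, and submodule cases via the same Hermite-matrix parameterisation) and \cite[Thm~A]{SV14} for the representation case. Your write-up just makes explicit the two steps (integral representation, then Denef-type rationality uniform over a resolution defined over $\fo[1/N]$) that those references carry out, so there is nothing genuinely different to compare.
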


The known proofs of Theorem~\ref{thm:denef_formulae} are constructive
but impractical due to their reliance on resolution of singularities.
We note that the exclusion of finitely many primes implicit in
Theorem~\ref{thm:denef_formulae} is one of the main reasons for our focus on
\itemph{generic} local zeta functions.

\subsection{By-products of the computation of topological zeta functions}

The computation of topological zeta functions is often considerably easier than
that of local ones.
In~\cite{topzeta,topzeta2,unipotent}, the author developed
practical methods for computing topological zeta functions associated with the
local zeta functions in Theorem~\ref{thm:denef_formulae};
these methods are not algorithms because they may fail if certain
non-degeneracy conditions are violated.
From now on, we will assume the validity of the following.

\begin{assumption}
  \label{A1}
  In the setting of Theorem~\ref{thm:denef_formulae},
  the method from \cite[\S 4]{topzeta2} (resp.~\cite[\S 5.4]{unipotent})
  for computing topological subalgebra and submodule zeta
  functions (resp.\ topological representation zeta functions) 
  succeeds.
\end{assumption}

\begin{rem}
  The author is unaware of a useful intrinsic characterisation of those groups,
  algebras, and modules such that Assumption~\ref{A1} is satisfied.
  The local zeta functions in Theorem~\ref{thm:denef_formulae} can be described
  in terms of $p$-adic integrals associated with a collection of polynomials.
  A sufficient condition for the validity of Assumption~\ref{A1} is
  ``non-degeneracy'' of said collection of polynomials in the sense of \cite[\S
  4.2]{topzeta}; cf.\ \cite[Lem.~5.7]{topzeta2} and \cite[\S 5.4.1]{unipotent}.
\end{rem}

The first stages of the methods for computing topological zeta functions
associated with the local zeta functions in Theorem~\ref{thm:denef_formulae},
as described in \cite{topzeta2,unipotent}, come close to constructing 
an explicit formula~\eqref{eq:main_denef}.
In detail, using \cite[Thm~4.10]{topzeta}
(see \cite[Thm~5.8]{topzeta2} and \cite[Thm~5.9]{unipotent}),
whenever they succeed,
these methods derive a formula~\eqref{eq:main_denef} such that the following two
assumptions are satisfied.

\begin{assumption}
  \label{A2}
  The $V_i$ in \eqref{eq:main_denef} are given as explicit subvarieties of
  algebraic tori over~$k$, defined by the vanishing of a finite number of Laurent
  polynomials and the non-vanishing of a single Laurent polynomial.
\end{assumption}

\begin{assumption}
  \label{A3}
  Up to multiplication by explicitly given rational functions of the form $(X-1)^aX^b$
  (for suitable $a,b\in \ZZ$),
  each $W_i$ in \eqref{eq:main_denef} is described explicitly 
  in terms of generating functions associated with half-open cones and convex polytopes.
\end{assumption}

We will clarify the deliberately vague formulation of Assumption~\ref{A3} in
\S\ref{s:ratfun}.\\

In summary, whenever they apply, the methods for computing topological zeta
functions in \cite{topzeta2,unipotent} fall short of ``constructing'' an
explicit formula \eqref{eq:main_denef} only in the sense that the $W_i$ are
characterised combinatorially instead of being explicitly given, say as
fractions of polynomials.

In the following sections,
assuming the validity of Assumptions~\ref{A1}--\ref{A3}, we will develop techniques
for performing further computations with a formula of the form
\eqref{eq:main_denef} with a view towards solving the Uniformity Problem from
the introduction in fortunate cases.

\section{Counting rational points on subvarieties of tori}
\label{s:count}

Assuming the validity of Assumption~\ref{A2},
this section is devoted to ``computing'' the numbers $\# \bar V_i(\fO_K/\fP_K)$
in~\eqref{eq:main_denef}. 
Using the inclusion-exclusion principle, we may reduce to the case that
the $V_i$ are all \itemph{closed} subvarieties of algebraic tori over $k$.
Note that the non-constructive version of the Uniformity Problem from
\S\ref{s:intro} has a positive solution whenever each $\# \bar
V_i(\fO_K/\fP_K)$ is a polynomial in $q_K$ (after excluding finitely
many places of~$k$).
The following method is based on the heuristic observation that the latter
condition is often satisfied for examples of interest.

\paragraph{Setup.}
Let $\Torus^n := \Spec(\ZZ[X_1^{\pm 1},\dotsc,X_n^{\pm 1}])$ and, for a
commutative ring $R$, write $\Torus^n_R := \Torus^n \otimes R$.
For a finite set $S\subset \Places_k$,
let $\fo_S = \{ x\in k : x \in \fo_v \text{ for all } v  \in
\Places_k\setminus S \}$
denote the usual ring of $S$-integers of $k$.
For $f_1,\dotsc,f_r \in \fo_S[X_1^{\pm 1},\dotsc,X_n^{\pm 1}]$,
define
\[
(f_1,\dotsc,f_r)^n_S := \Spec(\fo_S[X_1^{\pm 1},\dotsc,X_n^{\pm 1}]/\langle
f_1,\dotsc,f_r\rangle)
\subset \Torus^n_{\fo_S}.
\]
For $v \in \Places_k\setminus S$ and a finite extension $\fK$ of $\fo/\fp_v$,
let
$\abs{f_1,\dotsc,f_r}^n_{\fK}$ denote the number of $\fK$-rational points of
$(f_1,\dotsc,f_r)^n_S$.

\paragraph{Objective: symbolic enumeration.}
From now on, let $f_1,\dotsc,f_r \in \fo_S[X_1^{\pm 1},\dotsc,X_n^{\pm 1}]$ 
be given as above.
Our goal in the following is to symbolically ``compute'' the numbers
$\abs{f_1,\dotsc,f_r}^n_{\fK}$ as a function of $\fK$.
More precisely, 
the procedure described below constructs a polynomial, $H(X,c_1,\dotsc,c_\ell)$
say, over $\ZZ$ such that, after possibly enlarging~$S$, for all $v\in
\Places_k\setminus S$ and all finite extensions $\fK$ of $\fo/\fp_v$,
\[
(f_1,\dotsc,f_r)^n_{\fK} = H(\card \fK, \# \sU_1(\fK),\dotsc, \# \sU_\ell(\fK)),
\]
where each $\sU_i$ is an explicitly given closed subscheme of some $\Torus^{n_i}_{\fo_S}$.
We could of course simply take $H = c_1$ and $\sU_1 = (f_1,\dotsc,f_r)^n_S$ but
we seek to do better.
Indeed, in many cases of interest,
$H$ can be taken to be a polynomial in $X$ only.
In the following, we describe a method which has proven to be quite useful for
handling such cases.

\paragraph{Dimension $\le 1$.}
We first describe two base cases of our method.
Namely, if $n = 0$, then, after possibly enlarging $S$,
$(f_1,\dotsc,f_r)^n_S$ is either $\emptyset$ or $\Torus^0_{\fo_S} = \Spec(\fo_S)$ depending on
whether some $f_i \not= 0$ or not; thus, $\abs{f_1,\dotsc,f_r}^n_{\fK}
\in \{ 0,1\}$ for $\fK$ as above.

Secondly, if $n = 1$, then we use the Euclidean algorithm over $k$ (thus
possibly enlarging~$S$) to compute a single
square-free polynomial $f \in \fo_S[X_1]$
such that $(f_1,\dotsc,f_r)^1_S = (f)^1_S$.
If $f$ splits completely over $k$, then, after possibly enlarging~$S$ once again,
$\abs{f}^1_{\fK} = \deg(f)$ for all $\fK$ as above.
If $f$ does not split completely over $k$, then we introduce a new variable, $c_f$ say,
corresponding to the number of solutions of $f = 0$ in~$\fK^\times$.

\paragraph{Simplification.}
It is often useful to ``simplify'' the given Laurent polynomials $f_1,\dotsc,f_r$;
while this step was sketched in \cite[\S 6.6]{topzeta2}, here we provide some
further details.
As before, the set $S$ may need to be enlarged at various points in the
following.
First, we discard any zero polynomials among the $f_i$.
We then clear denominators so that each $f_i \in k[X_1,\dotsc,X_n]$
is an actual (not just Laurent) polynomial.
Next, we replace each $f_i$ by its square-free part in $k[X_1,\dotsc,X_n]$. 
For each pair $(i,j)$ of distinct indices, we then compute the (square-free part of
the) remainder,  $r$ say, of $f_i$ after multivariate polynomial division by
$f_j$ with respect to some term order (see e.g.\ \cite[\S 1.5]{AL94}).
If $r$ consists of fewer terms than $f_i$, we replace $f_i$ by $r$.
Next, for each pair $(i,j)$ as above and each term $t_i$ of $f_i$ and $t_j$ of
$f_j$, we are free to replace $f_i$ by (the square-free part of)
$\frac{t_j}g f_i - \frac{t_i}g f_j$, where $g = \gcd(t_i,t_j)$ (computed over $k$),
which we again do whenever it reduces the total number of terms.
After finitely many iterations of the above steps, $f_1,\dotsc,f_r$ will
stabilise at which point we conclude the simplification step.

\paragraph{}
We next describe two procedures which,
if applicable, allow us to express $\abs{f_1,\dotsc,f_r}^n_{\fK}$ in terms of
the numbers of rational points of subschemes of lower-dimensional tori. 
We then recursively attempt to solve the symbolic enumeration problem
from above for~these.

\paragraph{Reduction of dimension I: torus factors.}
As explained in \cite[\S 6.3]{topzeta2},
using the natural action of $\GL_n(\ZZ)$ on $\Torus^n$,
a Smith normal form computation allows us to effectively construct
$g_1,\dotsc,g_r\in \fo_S[X_1^{\pm 1},\dotsc,X_d^{\pm 1}]$ and an explicit isomorphism
$(f_1,\dotsc,f_r)^n_S \approx (g_1,\dotsc,g_r)^d_S\times_{\fo_S} \Torus^{n-d}_{\fo_S}$,
where $d$ is the dimension of the Newton polytope of $f_1\dotsb f_r$.
It follows that for all~$\fK$ as above,
$\abs{f_1,\dotsc,f_r}^n_{\fK} = \abs{g_1,\dotsc,g_r}^d_{\fK} \dtimes
(\card \fK-1)^{n-d}$.
In the following, we may thus assume that $n = d$.

\paragraph{Reduction of dimension II: solving for variables.}
Whenever it is applicable, the following lemma allows us to replace the problem
of symbolically computing $\abs{f_1,\dotsc,f_r}^n_{\fK}$ by four instances of
the same problem in dimension $n-1$.

\begin{lemma}
  \label{lem:cell}
  Let $F \subset \fo_S[X_1^{\pm 1},\dotsc,X_{n-1}^{\pm 1}]$.
  Further let $f = u - w X_n$ for non-zero $u, w \in \fo_S[X_1^{\pm
    1},\dotsc,X_{n-1}^{\pm 1}]$.
  Then for all $v\in \Places_k\setminus S$ and all finite extensions $\fK$ of $\fo/\fp_v$,
  \begin{align*}
    \abs{F,f}^n_{\fK} = \abs{F}^{n-1}_{\fK} - \abs{F,u}^{n-1}_{\fK}
    -\abs{F,w}^{n-1}_{\fK} + \card \fK \dtimes \abs{F,u,w}^{n-1}_{\fK}.
  \end{align*}
\end{lemma}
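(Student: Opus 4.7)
The plan is a straightforward fibration argument: project away the coordinate $X_n$ and count, for each fibre, how many $x_n \in \fK^\times$ satisfy the equation $u = w X_n$, depending on whether $u$ and $w$ themselves vanish at the projected point.

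Concretely, write a point of $(f_1,\dotsc,f_r)^n_S$ at $\fK$ as a pair $(\xx,x_n) \in (\fK^\times)^{n-1}\times \fK^\times$ with $F(\xx)=0$ and $u(\xx) = w(\xx) x_n$. Fix $\xx \in (\fK^\times)^{n-1}$ with $F(\xx)=0$ and consider the four cases determined by $(u(\xx)=0?,\,w(\xx)=0?)$. If $u(\xx)\ne 0$ and $w(\xx)\ne 0$, the equation has the unique solution $x_n = u(\xx)/w(\xx) \in \fK^\times$, contributing $1$. If $u(\xx)=0, w(\xx)\ne 0$ or $u(\xx)\ne 0, w(\xx)=0$, there is no solution in $\fK^\times$, contributing $0$. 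If $u(\xx)=w(\xx)=0$, every $x_n \in \fK^\times$ works, contributing $\card\fK - 1$.

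Now I would apply inclusion-exclusion on the subset of $\{\xx : F(\xx)=0\}$ where $u$ or $w$ vanishes. Writing $N = \abs{F}^{n-1}_\fK$, $A = \abs{F,u}^{n-1}_\fK$, $B = \abs{F,w}^{n-1}_\fK$, and $C = \abs{F,u,w}^{n-1}_\fK$, the four cases above contribute $\xx$-counts of $N - A - B + C$, $A - C$, $B - C$, and $C$ respectively. Multiplying by the respective fibre sizes $1, 0, 0, \card\fK - 1$ and summing gives
\[
\abs{F,f}^n_\fK = (N - A - B + C) + (\card\fK - 1)C = N - A - B + \card\fK \dtimes C,
\]
which is the asserted identity.

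There is no real obstacle; the only minor point is to keep the bookkeeping right, in particular to respect the restriction $x_n \in \fK^\times$ (this is exactly what kills the two mixed cases) and to recall that the points being enumerated in all the $\abs{\dtimes}^{n-1}_\fK$ quantities are themselves $\fK^\times$-valued on the first $n-1$ coordinates, so that the inclusion-exclusion takes place inside $(\fK^\times)^{n-1}$ and no further correction terms are needed.
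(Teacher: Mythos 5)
Your proof is correct and is essentially the paper's argument: both project away $X_n$, observe that the fibre has one point when $w\neq 0$ (forcing $u\neq 0$ since $x_n\in\fK^\times$), no point in the mixed cases, and $\card\fK-1$ points when $u=w=0$, and then conclude by inclusion--exclusion on the base. The paper merely packages the same counting scheme-theoretically, as an isomorphism $(F,f)^n_S\setminus(F,f,w)^n_S\approx(F)^{n-1}_S\setminus(F,uw)^{n-1}_S$ together with $(F,f,w)^n_S\approx(F,u,w)^{n-1}_S\times_{\fo_S}\Torus^1_{\fo_S}$, before counting $\fK$-points.
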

\begin{proof}
  Projection onto the first $n-1$ coordinates induces an
  isomorphism of $\fo_S$-schemes
  $(F,f)^n_S \setminus (F,f,w)^n_S \approx (F)^{n-1}_S \setminus (F,uw)^{n-1}_S$.
  As
  $(F, f, w)^n_S = (F,u,w)^n_S  \approx (F,u,w)^{n-1}_S \times_{\fo_S} \Torus^1_{\fo_S}$,
  the claim follows since for all $v\in \Places_k\setminus S$ and all finite
  extensions  $\fK$ of $\fo/\fp_v$,
  \begin{align*}
    \card{((F)^{n-1}_S \setminus (F,uw)^{n-1}_S)(\fK)}
    & =
    \abs{F}^{n-1}_{\fK} - \abs{F,u}^{n-1}_{\fK} - \abs{F,w}^{n-1}_{\fK}
    + \abs{F,u,w}^{n-1}_{\fK}.
    \qedhere
  \end{align*}
\end{proof}

\begin{rem}
  The evident analogue of Lemma~\ref{lem:cell} for Euler characteristics 
  of closed subvarieties of algebraic tori over $k$ has already been used in
  the author's software package \textsf{Zeta}~\cite{Zeta} for computing
  topological zeta functions.
  However, only the special case that $w \in \fo_S^\times$
  (so that $(F,w)^{n-1}_S = (F,u,w)^{n-1}_S = \emptyset$) was
  spelled out explicitly in \cite[\S 6.6]{topzeta2}.
\end{rem}

\paragraph{Final case.}
Finally, if none of the above techniques
for computing or decomposing $(f_1,\dotsc,f_r)^n_S$ applies,
then we introduce a new variable corresponding to $\abs{f_1,\dotsc,f_r}^n_{\fK}$.
In order to avoid this step whenever possibly, 
we first attempt to apply the above steps (including all possible applications
of Lemma~\ref{lem:cell}) without ever invoking this final case. 

\section{Local zeta functions as sums of rational functions}
\label{s:ratfun}

Suppose that Assumptions~\ref{A1}--\ref{A3} are satisfied.
Our first task in this section is to rewrite~\eqref{eq:main_denef} as a sum of
explicitly given rational functions.
With the method from \S\ref{s:count} at our disposal, 
this problem reduces to finding such an expression for each $W_i$.
We will see that Barvinok's algorithm from convex geometry 
solves this problem.
Our second task then concludes the computation
of the generic local zeta functions in Theorem~\ref{thm:denef_formulae};
it is concerned with adding a potentially large number of multivariate rational
functions.
We describe a method aimed towards improving the practicality of this step which,
while mathematically trivial, often vastly dominates the run-time of our
computations.

\subsection{Barvinok's algorithm: generating functions and substitutions}

Let $\cP \subset \Orth^n$ be a rational polyhedron and
let $\bm \lambda = (\lambda_1,\dotsc,\lambda_n)$ be algebraically independent
over $\QQ$.
It is well-known that the generating function
$\genfun \cP := \sum_{\alpha \in \cP \cap \ZZ^n} \bm\lambda^\alpha$
is rational in the sense that within the field of fractions of $\QQ\llb
\lambda_1,\dotsc,\lambda_n\rrb$, it belongs to $\QQ(\lambda_1,\dotsc,\lambda_n)$.
The standard proof of this fact (see e.g.\ \cite[Ch.\
13]{Bar08}) proceeds by reducing to the case that $\cP$ is a cone, in which case
an explicit formula for $\genfun \cP$ can be derived from a triangulation of
$\cP$ via the inclusion-exclusion principle.
This strategy for computing~$\genfun \cP$ is, however, of rather limited
practical use.

A far more sophisticated approach is given by ``Barvinok's algorithm''; see
\cite{Bar94,BP99}.
Barvinok's algorithm computes
$\genfun{\cP}$ for each (suitably encoded) rational polyhedron $\cP
\subset \Orth^n$ as a sum of rational 
functions of the form $c \bm\lambda^{\alpha_0} /( (1-\bm\lambda^{\alpha_1})\dotsb
(1-\bm\lambda^{\alpha_n}))$ for suitable $\alpha_0,\dotsc,\alpha_n \in \ZZ^n$
and $c \in \QQ$.
For a fixed ambient dimension $n \ge 1$, his algorithm runs in polynomial time
so that $\genfun \cP$ is computed as a short sum of short rational functions in a
precise technical sense.
Beyond its theoretical strength, Barvinok's algorithm is also powerful in practice
as demonstrated by the software implementation \LattE{}~\cite{LattE}. 

In the setting of Assumption~\ref{A3}, we are not primarily interested in
generating functions associated with polyhedra themselves but in rational
functions derived from such generating functions via monomial substitutions.
In detail, let $\bm\xi = (\xi_1,\dotsc,\xi_m)$ be algebraically independent over $\QQ$
and let $\sigma_1,\dotsc,\sigma_n \in \ZZ^m$.
Suppose that $\cP \subset \Orth^n$ is a rational polyhedron such that
$W := \genfun{\cP}(\bm\xi^{\sigma_1},\dotsc,\bm\xi^{\sigma_n})$ is
well-defined on the level of rational functions.
In principle, we could compute $W$ by first using the output of Barvinok's
algorithm in order to write $\genfun{\cP}$ in lowest terms, followed by an
application of the given substitution.
This method is, however, often impractical due to the computational cost of
(multivariate) rational function arithmetic.

A theoretically favourable and also practical alternative
is developed in \cite[\S 2]{BW03} (cf.~\cite[\S 5]{BP99})
There, a polynomial time algorithm is described which takes as input a short
representation of $\genfun\cP$ (as, in particular, provided by 
Barvinok's algorithm) and constructs a similar short representation for $W$.
The important point to note here is that while we assumed the substitution $\lambda_i \mapsto
\bm\xi^{\sigma_i}$ to be valid for $\genfun\cP$ itself, it may be undefined
for some of the summands in the expression provided by Barvinok's algorithm.

\subsection{Computing the $W_i$ in \eqref{eq:main_denef}}

We may now clarify the vague formulation of Assumption~\ref{A3}.
Namely, up to a factor $(X-1)^aX^b$,
the $W_i$ in \eqref{eq:main_denef} are obtained by applying suitable monomial
substitutions (see \cite[Rem.\ 4.12]{topzeta} and \cite[Thm\ 5.5]{unipotent})
to rational functions of the form
$\cZ^{\cC_0,\cP_1,\dotsc,\cP_m}(\xi_0,\dotsc,\xi_m)$
from \cite[Def.\ 3.6]{topzeta}.
The latter functions can, by their definitions, be written as
sums of rational functions obtained by applying suitable monomial substitutions
to generating functions enumerating lattice points inside rational half-open
cones;
as explained in \cite[\S 8.4]{topzeta2}, we may replace these half-open cones by
rational polyhedra. 
We may thus use Barvinok's algorithm as well as the techniques for
efficient monomial substitutions from \cite[\S 2]{BW03} in order to 
write each $W_i$ as a sum of bivariate rational functions of the form
\begin{equation}
  \label{eq:cycrat}
  f(X,Y) / \bigl((1-X^{a_1}Y^{b_1}) \dotsb (1-X^{a_m}Y^{b_m})\bigr)
\end{equation}
for suitable integers $a_i,b_i \in \ZZ$, $m \ge 0$, and $f(X,Y) \in \QQ[X,Y]$.

\subsection{Final summation}

In the following, we allow $f(X,Y)$ in \eqref{eq:cycrat} to
be an element of $\QQ[X,Y,c_1,c_2,\dotsc]$.
By taking into account the polynomials obtained using \S\ref{s:count},
at this point, we may thus assume that we constructed a finite sum of 
expressions \eqref{eq:cycrat} 
such that, after excluding finitely many places of $k$,
the local zeta functions in Theorem~\ref{thm:denef_formulae} are obtained by
specialising $X \mapsto q_K$,
$Y \mapsto q_K^{-s}$, and $c_i \mapsto \# \sU_i(\fO_K/\fP_K)$ for certain
explicit subschemes $\sU_i$ of tori over~$\fo$ (or over $\fo_S$).
All that remains to be done in order to recover the local zeta functions of
interest is to write the given sum of expressions \eqref{eq:cycrat} in lowest
terms.

While our intended applications of Barvinok's algorithm lie well within the
practical scope of \LattE{}~\cite{LattE}, it will often be infeasible to 
pass the rational functions \eqref{eq:cycrat} to a computer algebra system in
order carry out the final summation.
In addition to the sheer number of rational functions to be considered, a key
problem is due to the fact that the number of distinct pairs $(a_i,b_i)$ arising
from summands \eqref{eq:cycrat} often obscures the relatively simple shape of the
final sum (i.e.~the local zeta function to be computed).
This is consistent with the well-known observation (see e.g.~\cite[\S 2.3]{Den91a})
that few candidate poles of local zeta functions as provided by explicit
formulae \eqref{eq:main_denef} survive cancellation.

In order to carry out the final summation, we proceed in two stages.
First, we use an idea due to Woodward~\cite[\S 2.5]{Woo05} and add
and simplify those summands \eqref{eq:cycrat} such that distinguished pairs $1-X^cY^d$
occur in their written denominators;
our hope here is that some rays $(a_i,b_i)$ will be removed via cancellations.
While this step is not essential, it might improve the performance and memory
requirements of the final stage. 
Here, we first construct a common denominator of all the remaining rational
functions~\eqref{eq:cycrat}.
We then compute the final result by summing the \eqref{eq:cycrat} rewritten
over our common denominator, followed by one final division.
In addition to being trivially parallelisable, by only adding numerators,
we largely avoid costly rational function arithmetic.

\subsection{Implementation issues}

The method for computing generic local subobject or representation zeta
functions described above has been implemented (for $k = \QQ$)
by the author as part of his package \textsf{Zeta}~\cite{Zeta} for
Sage~\cite{Sage}.
The program \LattE{}~\cite{LattE} (which implements
Barvinok's algorithm) plays an indispensable role.
Moreover, the computer algebra system Singular~\cite{Singular} features
essentially in the initial stages of our method (as described in~\cite{topzeta2,unipotent}).

The author's implementation is primarily designed to find instances of positive
solutions to the Uniformity Problem; 
its functionality and practicality are both quite restricted in non-uniform
cases.
Furthermore, the author's method supplements Woodward's
approach \cite{Woo05} for computing local (subalgebra and ideal) zeta functions
as well as various ad hoc computations carried out by others without replacing
them. 
In particular, various examples of local zeta functions computed by 
Woodward cannot be reproduced using the present method.
In addition to the theoretical limitations of the techniques from
\cite{topzeta,topzeta2,unipotent}, this is also partially due to practical
obstructions:
while some computations of topological zeta functions in 
\cite{topzeta,topzeta2,unipotent} were already fairly involved, the present
method is orders of magnitude more demanding.

\section{Interlude: reduced representation zeta functions}
\label{s:reduced}

Reduced zeta functions arising from the enumeration of subalgebras and ideals
were introduced by Evseev~\cite{Evs09}.
They constitute a limit ``$p \to 1$'' of suitable local zeta functions
distinct from but related to the topological zeta functions of Denef and
Loeser~\cite{DL92} (which were later adapted to the case of subobject zeta
functions by du~Sautoy and Loeser~\cite{dSL04}).
Informally, Evseev's definition can be summarised as follows in our setting.
Let $\sA$ be an $\fo$-form of a $k$-algebra as in \S\ref{ss:subalgebras}.
For each $v\in \Places_k$, we may regard $\zeta_{\sA\otimes_{\fo}\fo_v}^\le(s)$
as a (rational) formal power series in $Y = q_v^{-s}$.
The reduced subalgebra zeta function of $\sA$ (an invariant of $\sA \otimes_{\fo}
\CC$, in fact)
is obtained by taking a limit ``$q_v \to 1$'' applied to the
coefficients of $\zeta_{\sA\otimes_{\fo}\fo_v}^\le(s)$ as a series in $Y$.
The rigorous definition of reduced zeta function in \cite{Evs09} involves the
motivic subobject zeta functions introduced by du~Sautoy and Loeser~\cite{dSL04}.

In this section, we show that ``reduced representation zeta functions'' 
associated with unipotent groups are always identically $1$.
In addition to imposing restrictions on the shapes of generic local
representation zeta functions of such groups, this fact provides a simple
necessary condition for the correctness of explicit calculations of local zeta
functions such as those documented below.

We begin with a variation of a result from \cite{stability}.
Let $V$ be a separated $k$-scheme of finite type.
For any embedding $k\subset \CC$, the topological Euler
characteristic $\Euler(V(\CC))$ is defined and well-known to be independent
of the embedding; cf.~\cite{Kat94}.

\begin{lemma}
  \label{lem:stab_red}
  Let $\sV_1,\dotsc,\sV_r$ be separated $\fo$-schemes of finite type and 
  $W_1,\dotsc,W_r \in \QQ(X,Y_1.\dotsc,Y_m)$.
  Suppose that for almost all $v\in \Places_k$ and all integers $f \ge 0$,
  each $W_i$ is regular at  $(q_v^f,Y_1,\dotsc,Y_m)$.
  Let $P\subset \Places_k$ have natural density $1$ and suppose that
  $$\sum\limits_{i=1}^r \# \sV_i(\fo/\fp_v) \dtimes W_i(q_v,Y_1,\dotsc,Y_m) = 0$$ for 
  all $v\in P$.
  Then
  $\sum\limits_{i=1}^r \Euler(\sV_i(\CC)) \dtimes W_i(1,Y_1,\dotsc,Y_m) = 0$.
\end{lemma}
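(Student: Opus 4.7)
The plan is to reduce the multivariable statement, via clearing denominators and extracting coefficients in $Y$, to the one-variable special case (with $m = 0$) treated in \cite{stability}.

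First I would clear denominators. Write each $W_i = U_i/V_i$ in lowest terms with $U_i, V_i \in \QQ[X, Y_1, \dots, Y_m]$. The regularity hypothesis applied at $f = 0$ says that each $W_i$ is regular at $(1, Y_1,\dots,Y_m)$, which forces $V_i(1, Y) \ne 0$ as an element of $\QQ[Y_1,\dots,Y_m]$. Set $D := \prod_i V_i$ and $N_i := D \cdot W_i \in \QQ[X, Y]$, so that also $D(1, Y) \ne 0$. For almost every $v\in \Places_k$, $D(q_v, Y)$ is a nonzero polynomial in $Y$, since some coefficient of $D(X,Y)\in \QQ[X][Y]$ is a nonzero element of $\QQ[X]$ and therefore does not vanish at $X = q_v$ for cofinitely many $v$. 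After enlarging the finite set of excluded places once more, multiplying the given identity on $P$ through by $D(q_v, Y)$ yields
\[
\sum_{i=1}^r \# \sV_i(\fo/\fp_v) \dtimes N_i(q_v, Y_1, \dots, Y_m) = 0 \qquad \text{in } \QQ[Y_1,\dots,Y_m]
\]
for every $v \in P$ outside a finite set.

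Next I would expand $N_i(X, Y) = \sum_\alpha N_{i,\alpha}(X)\, Y^\alpha$ over finitely many multi-indices $\alpha$, with $N_{i,\alpha} \in \QQ[X]$. Equating coefficients of $Y^\alpha$ in the display above produces, for each $\alpha$ and each $v \in P$ outside a finite set, the one-variable identity $\sum_{i=1}^r \# \sV_i(\fo/\fp_v) \dtimes N_{i,\alpha}(q_v) = 0$. I would then apply the result from \cite{stability}, which is precisely the statement of the lemma in the case $m = 0$, to each $\alpha$ separately to conclude
\[
\sum_{i=1}^r \Euler(\sV_i(\CC)) \dtimes N_{i,\alpha}(1) = 0.
\]
Reassembling these identities into a polynomial identity in $\QQ[Y_1,\dots,Y_m]$ and dividing through by the nonzero polynomial $D(1, Y)$ delivers $\sum_i \Euler(\sV_i(\CC)) \dtimes W_i(1, Y_1,\dots,Y_m) = 0$, as required.

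The main obstacle is the invocation of \cite{stability}; that underlying one-variable result rests on the Grothendieck--Lefschetz trace formula, which writes $\# \sV_i(\fo/\fp_v)$ as an alternating sum of traces of Frobenius on $\ell$-adic compactly supported cohomology, combined with Deligne's weight theorem to control the $q_v$-asymptotics of those traces. Separating the given linear relation by weight and using the density of $\{q_v : v \in P\}$ in the primes isolates the weight-zero contribution, which specialises to $\Euler(\sV_i(\CC))$ and so produces the desired topological identity. The regularity of each $W_i$ at every $(q_v^f, Y)$ with $f \ge 0$---and hence at $(1, Y)$---is exactly what legitimises the substitution $X \mapsto 1$ performed at the rational-function level in the final step.
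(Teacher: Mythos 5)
Your reduction of the general statement to the one-variable case is sound: clearing denominators, using regularity at $f=0$ to guarantee $D(1,Y_1,\dotsc,Y_m)\neq 0$, extracting coefficients of the $Y$-monomials, and dividing back at the end are all legitimate, and deferring the arithmetic content to \cite{stability} parallels what the paper does. The paper's own proof is just that deferral: by \cite[Thm~3.7]{stability} together with \cite[Ch.\ 4]{Ser12}, each point-count $\#\sV_i(\fo/\fp_v)$ is, for almost all $v$, the value at $\mathrm{Frob}_v$ of a virtual $\ell$-adic character $\alpha_i$ of a suitable Galois group with $\alpha_i(1)=\Euler(\sV_i(\CC))$, and the conclusion then follows from \cite[Thm~3.2]{stability} \emph{and its proof}; no reduction to $m=0$ is made because the extra variables cause no difficulty in that argument.

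The genuine soft spot is the key one-variable input itself. First, it is not ``precisely the statement'' of the $m=0$ case that is available in \cite{stability}: the paper introduces the lemma as a \emph{variation} of a result there and must combine Thm~3.7, Serre, and the \emph{proof} (not merely the statement) of Thm~3.2, so your black box needs the same unpacking. Second, your sketch of why that input holds would not work. Deligne's weight bounds together with the density of $\{q_v: v\in P\}$ cannot ``isolate the weight-zero contribution'', and that contribution does not specialise to $\Euler(\sV_i(\CC))$ anyway: the Euler characteristic is the alternating sum of the dimensions of \emph{all} compactly supported cohomology groups, across all weights (already for $\Torus^1$ the weight-zero part gives $-1$ while $\Euler=0$), and asymptotic separation in $q_v$ only sees the top weight, à la Lang--Weil; moreover the Frobenius eigenvalues vary with $v$, so a relation imposed separately at each $v$ cannot be split by weight. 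The mechanism that actually works is Chebotarev density: writing each coefficient polynomial evaluated at $q_v$ as the trace at $\mathrm{Frob}_v$ of a polynomial in the cyclotomic character, the hypothesis says that the continuous virtual character $\beta=\sum_i \alpha_i\dtimes(\text{that polynomial})$ vanishes on the Frobenius classes attached to $P$; since $P$ has density one, these classes meet every conjugacy class of every finite quotient, hence are dense, so $\beta\equiv 0$ and in particular $\beta(1)=\sum_i\Euler(\sV_i(\CC))\dtimes(\text{coefficient at } X=1)=0$. With that argument supplying the $m=0$ case, your coefficient-extraction reduction (or a direct application carrying the $Y_j$ along) does finish the proof.
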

\begin{proof}
  Using~\cite[Ch.\ 4]{Ser12}, in the setting of \cite[Thm~3.7]{stability},
  we may assume that $\alpha(1_{\Gamma_S}) = \Euler(V(\CC))$.
  The claim is now an immediate consequence of \cite[Thm~3.2]{stability} and its proof.
\end{proof}

\begin{rem}
  Given a formula \eqref{eq:main_denef} for local subalgebra or ideal zeta
  functions such that the regularity conditions in Lemma~\ref{lem:stab_red} are satisfied,
  we may read off the associated reduced zeta function as $\sum_{i=1}^r
  \Euler(V_i(\CC)) \dtimes W_i(1,Y)$ without using motivic zeta functions.
\end{rem}

The following is a consequence of the explicit formulae in \cite{DV15}.
\begin{thm}
  \label{thm:one_red_trivial}
  Let $\GG$ be a unipotent algebraic group over $k$.
  Let $\G$ be an $\fo$-form of~$\GG$ as an affine group scheme of finite type.
  There are separated $\fo$-schemes $\sU_1,\dotsc,\sU_\ell$ of finite type and
  rational functions $W_1,\dotsc,W_\ell\in \QQ(X,Y)$ such that
  \begin{enumerate}
  \item for almost all $v \in \Places_k$,
    $\zeta_{\G(\fo_v)}^{\wirr}(s) - 1 = \sum\limits_{i=1}^\ell \# \sU_i(\fo/\fp_v) \dtimes W_i(q_v,q_v^{-s})$,
  \item
    each $W_i$ is regular at each point $(q,Y)$ for $q \ge 1$, and
  \item
    $W_i(1,Y) = 0$ for $i = 1,\dotsc,\ell$.
    \end{enumerate}
\end{thm}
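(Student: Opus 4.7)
The approach is to derive the theorem directly from the explicit Denef-type formulae for representation zeta functions of unipotent groups established in \cite{DV15}, tracking carefully how the trivial representation is detected and how the normalisation factors of Haar measure propagate into the rational functions~$W_i$.

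First, I would invoke \cite{DV15} to write, for almost all $v \in \Places_k$, an identity $\zeta_{\G(\fo_v)}^{\wirr}(s) = \sum_{j=0}^{\ell} \#\sV_j(\fo/\fp_v) \cdot \tilde W_j(q_v, q_v^{-s})$ whose summands are built from $p$-adic integrals arising from the Kirillov orbit method. Precisely one of these summands accounts for the unique twist class of the trivial representation and contributes the constant $1$ to the zeta function; after relabelling this as $j=0$ and setting $\sU_i := \sV_j$ and $W_i := \tilde W_j$ for $j = 1, \dots, \ell$, identity~(i) follows.

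Next, I would verify~(ii). Applying Barvinok's algorithm together with the substitution procedure from \cite[\S 2]{BW03} as described in \S\ref{s:ratfun}, each $W_i$ can be written as a sum of bivariate rational functions of the form \eqref{eq:cycrat}, whose denominators $(1-X^{a_k}Y^{b_k})$ do not vanish when $X$ is specialised to any real number $q \ge 1$ and $Y$ is retained as an indeterminate; hence regularity at every such point $(q, Y)$ is automatic.

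The substantive point is~(iii). In the Kirillov-orbit construction underlying \cite{DV15}, each non-trivial summand arises from integrating over a subset of a parameter space on which a non-trivial rank condition is imposed; the Haar measure of such a subset naturally produces a factor of $(1-q_v^{-1})$, and after propagating this factor through the manipulations of \S\ref{s:ratfun} each $W_i$ acquires an explicit factor of $(X-1)$ in its numerator, forcing $W_i(1, Y) = 0$. The main obstacle is the book-keeping: the Barvinok-style representation~\eqref{eq:cycrat} of the $W_i$ does not typically display this factor manifestly, so one must either re-derive the formulae of \cite{DV15} while keeping the $(1 - q_v^{-1})$ factor visible throughout, or verify the vanishing at $X = 1$ directly once a closed form has been obtained.
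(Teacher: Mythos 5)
Your approach has a structural gap that makes it unworkable as written. The theorem is stated for an arbitrary $\fo$-form $\G$ of an arbitrary unipotent group $\GG$ over $k$; it is unconditional, and indeed it has to be, since it feeds into Corollaries~\ref{cor:all_red_trivial}--\ref{cor:prod_riemann} which must hold without restriction. Yet your verification of (ii) explicitly routes through \S\ref{s:ratfun} and the representation \eqref{eq:cycrat}. That entire section is conditioned on Assumptions~\ref{A1}--\ref{A3}, which fail for some unipotent groups (the paper notes, for instance, that $\Uni_6$ cannot be handled). So the claim that ``each $W_i$ can be written as a sum of bivariate rational functions of the form \eqref{eq:cycrat}'' is not available in the generality required. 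The paper avoids this by going directly to the structural formulae of \cite{DV15}: in the notation of \cite[Prop.~3.4]{DV15} one shows the rational numbers $A_j,B_j$ there may be taken to be integers (via \cite[(2.3)]{DV15} and \cite[(4.3)]{unipotent}), establishes the inequality $\card{M_i}\le\card{U_i}+1$, and then reads off (i)--(iii) from \cite[Rem.~3.6]{DV15}. Those results from \cite{DV15} hold for every unipotent group, with no non-degeneracy hypotheses.

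Two secondary issues. First, even where \eqref{eq:cycrat} is available, regularity of $W_i$ at $(q,Y)$ for $q\ge 1$ is not ``automatic'' from the shape of the denominator: a factor $1-X^{a}Y^{b}$ with $b=0$ and $a\ge 1$ vanishes at $q=1$, so you would still need the structural control that the paper extracts from \cite[Rem.~3.6]{DV15} (or, equivalently, the bound $\card{M_i}\le\card{U_i}+1$) to rule this out. Second, you correctly identify that your Haar-measure heuristic for (iii) is not a proof and flag that one would either have to re-derive \cite{DV15} keeping the $(1-q_v^{-1})$ factor visible, or check vanishing at $X=1$ post hoc; the paper's citation of \cite[Rem.~3.6]{DV15} is precisely the mechanism that discharges this, so your ``book-keeping obstacle'' is exactly the missing content.
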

\begin{proof}
  In the setting of \cite[Prop.\ 3.4]{DV15}, the rational numbers $A_j$
  and $B_j$ can actually be assumed to be integers;
  this follows e.g.\ by taking square roots of principal minors and rewriting
  \cite[(2.3)]{DV15} as in \cite[(4.3)]{unipotent}.
  Next, using the same notation as in \cite[Prop.\ 3.4]{DV15}, 
  $\card{M_i} \le \card{U_i} + 1$
  whence the claim follows easily from \cite[Rem.\ 3.6]{DV15}.
\end{proof}

\begin{rem}
  Theorem~\ref{thm:one_red_trivial} refines
  the simple observation that for almost all $v\in \Places_k$,
  the coefficients of $\zeta_{\G(\fo_v)}^{\wirr}(s) - 1$ as a series in
  $q_v^{-s}$ are non-negative integers divisible by $q_v-1$, a simple
  consequence of the Kirillov orbit method. (Indeed, $(\fo/\fp_v)^\times$ acts
  freely on non-trivial characters while preserving the two types of radicals in \cite[Thm~2.6]{SV14}.)
\end{rem}

By combining Lemma~\ref{lem:stab_red} and
Theorem~\ref{thm:one_red_trivial}, we obtain the following.
\begin{cor}
  \label{cor:all_red_trivial}
  Let $\G$ be as in Theorem~\ref{thm:one_red_trivial}.
  Let $\sV_1,\dotsc,\sV_r$ be separated $\fo$-schemes of finite type and
  let $W_1,\dotsc,W_r\in \QQ(X,Y)$ such that for almost all $v \in \Places_k$,
  \[
  \zeta_{\G(\fo_v)}^{\wirr}(s) = \sum_{i=1}^r \# \sV_i(\fo/\fp_v) \dtimes W_i(q_v,q_v^{-s}).
  \]
  If each $W_i$ is regular at $(q,Y)$ for each $q \ge 1$,
  then $\sum\limits_{i=1}^r \Euler(\sV_i(\CC)) \dtimes W_i(1,Y) = 1$.
  \qed
\end{cor}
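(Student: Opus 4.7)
The plan is to deduce the corollary by subtracting two expressions for the same local zeta function, thereby producing an identity to which Lemma~\ref{lem:stab_red} can be applied in the single variable $Y$. First I would invoke Theorem~\ref{thm:one_red_trivial} to fix, for almost all $v \in \Places_k$, a presentation
\[
\zeta_{\G(\fo_v)}^{\wirr}(s) - 1 \;=\; \sum_{j=1}^\ell \# \sU_j(\fo/\fp_v) \dtimes W'_j(q_v, q_v^{-s})
\]
whose $W'_j$ are regular at every point $(q, Y)$ with $q \ge 1$ and satisfy $W'_j(1, Y) = 0$. Subtracting this from the given presentation in the statement, I obtain that for almost all $v$,
\[
\Bigl(\sum_{i=1}^r \# \sV_i(\fo/\fp_v) \dtimes W_i(q_v, q_v^{-s})\Bigr) \;-\; 1 \;-\; \sum_{j=1}^\ell \# \sU_j(\fo/\fp_v) \dtimes W'_j(q_v, q_v^{-s}) \;=\; 0
\]
as analytic functions of $s$ in a right half-plane; since both sides are rational in $Y = q_v^{-s}$, this persists as an identity of rational functions in $Y$.

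Next, to bring the additive constant $-1$ into the format demanded by Lemma~\ref{lem:stab_red}, I would encode it as the pair $(\sV_0, W_0)$ with $\sV_0 := \Spec(\fo)$ and constant rational function $W_0 := -1$, so that $\#\sV_0(\fo/\fp_v) = 1$, $\Euler(\sV_0(\CC)) = 1$, and $W_0$ is regular everywhere. Combining all three indexed collections $(\sV_i, W_i)$, $(\sV_0, W_0)$, and $(\sU_j, -W'_j)$ yields a single vanishing sum of the form required by Lemma~\ref{lem:stab_red} (with $m = 1$) on a cofinite set $P$ of places, which has natural density $1$. The regularity hypothesis needs checking: for almost all $v$ and all $f \ge 0$, one has $q_v^f \ge 1$, so regularity at $(q, Y)$ for $q \ge 1$ (as assumed for the $W_i$ and granted for the $W'_j$ by item (ii) of Theorem~\ref{thm:one_red_trivial}) implies regularity at $(q_v^f, Y)$. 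The lemma then yields
\[
\sum_{i=1}^r \Euler(\sV_i(\CC)) \dtimes W_i(1, Y) \;-\; 1 \;-\; \sum_{j=1}^\ell \Euler(\sU_j(\CC)) \dtimes W'_j(1, Y) \;=\; 0,
\]
and the last sum vanishes by item (iii) of Theorem~\ref{thm:one_red_trivial}, giving the desired conclusion.

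I do not expect a serious obstacle once the heavy lifting of Lemma~\ref{lem:stab_red} and Theorem~\ref{thm:one_red_trivial} is available: the entire argument is a subtraction followed by a specialisation. The main care required is bookkeeping, namely verifying the regularity hypothesis of Lemma~\ref{lem:stab_red} uniformly across the three families of rational functions, and encoding the constant term $-1$ by a scheme so that it fits the template of the lemma.
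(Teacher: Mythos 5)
Your proposal is correct and is exactly the argument the paper intends: the corollary is stated with an immediate \qed, the proof being precisely the combination of Lemma~\ref{lem:stab_red} and Theorem~\ref{thm:one_red_trivial} that you spell out (subtract the presentation of $\zeta_{\G(\fo_v)}^{\wirr}(s)-1$, absorb the constant $1$ via $\Spec(\fo)$, check regularity at $q_v^f\ge 1$, and use $W'_j(1,Y)=0$). Your bookkeeping, including $\#\Spec(\fo)(\fo/\fp_v)=1$ and $\Euler(\Spec(\fo)(\CC))=1$ and the passage from an identity of analytic functions in $s$ to one of rational functions in $Y$, fills in the details faithfully.
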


\begin{cor}
  \label{cor:uniform_red_trivial}
  Let $\G$ be as in Theorem~\ref{thm:one_red_trivial}.
  Let $W(X,Y) \in \QQ(X,Y)$ 
  such that
  \begin{enumerate}
  \item
    $W(X,Y)$ can be written over a denominator which is a product of non-zero factors of
    the form $1-X^aY^b$ for integers $a \ge 0$ and $b \ge 1$ and
  \item $\zeta_{\G(\fo_v)}^{\wirr}(s) = W(q_v^{\phantom{-s}}\!\!\!,q_v^{-s})$ for almost
    all $v\in \Places_k$.
  \end{enumerate}
  Then $W(1,Y) = 1$. \qed
\end{cor}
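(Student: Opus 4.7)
The plan is to deduce this corollary by specialising Corollary~\ref{cor:all_red_trivial} to the case of a single summand. Concretely, I would take $r = 1$, $\sV_1 = \Spec(\fo)$ (a separated $\fo$-scheme of finite type), and $W_1 = W$. Then $\# \sV_1(\fo/\fp_v) = 1$ for every $v \in \Places_k$, the formula in assumption (ii) becomes precisely the formula required in Corollary~\ref{cor:all_red_trivial}, and $\Euler(\sV_1(\CC)) = \Euler(\Spec(\CC)) = 1$. The conclusion of Corollary~\ref{cor:all_red_trivial} then reads $W(1,Y) = 1$, which is exactly what we want.

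The only point that requires verification is the hypothesis that $W_1 = W$ is regular at $(q,Y)$ for every integer $q \ge 1$; this is where assumption (i) enters. Write $W(X,Y) = f(X,Y)/\prod_{j}(1-X^{a_j}Y^{b_j})$ with $a_j \ge 0$ and $b_j \ge 1$. Specialising $X \mapsto q$ with $q \ge 1$ replaces each denominator factor by $1 - q^{a_j}Y^{b_j}$, which is a non-zero polynomial in $Y$ because $q^{a_j} \ne 0$ and $b_j \ge 1$. Hence the specialised denominator is a non-zero polynomial in $Y$, so $W(q,Y)$ is a well-defined rational function of $Y$ and $W$ is regular at $(q,Y)$.

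With the regularity verified, Corollary~\ref{cor:all_red_trivial} applies and yields the claim. There is no genuine obstacle here: the entire content of the corollary is the observation that assumption (i) on the shape of the denominator is precisely what is needed to make the limit ``$p \to 1$'' underlying Lemma~\ref{lem:stab_red} and Theorem~\ref{thm:one_red_trivial} meaningful, after which the triviality of reduced representation zeta functions forces the specialisation $W(1,Y)$ to equal~$1$.
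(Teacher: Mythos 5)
Your proposal is correct and matches the paper's (implicit) argument: Corollary~\ref{cor:uniform_red_trivial} is stated with no separate proof precisely because it is the specialisation of Corollary~\ref{cor:all_red_trivial} to $r=1$ with the trivial scheme, condition (i) supplying the regularity at $(q,Y)$ for all $q\ge 1$ (in particular $q=1$, since $b\ge 1$ makes each factor $1-Y^b$ non-zero). Your verification of that regularity and of $\#\sV_1(\fo/\fp_v)=\Euler(\sV_1(\CC))=1$ is exactly the intended reasoning.
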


The assumptions in Corollary~\ref{cor:uniform_red_trivial} are
satisfied for many examples of interest; see Table~\ref{tab:six}.
In fact, even the following much stronger assumptions are often satisfied.

\begin{cor}
  \label{cor:prod_riemann}
  Let $\G$ be as in Theorem~\ref{thm:one_red_trivial}.
  Suppose that there are integers $a_i \ge 0$, $b_i \ge 1$, and $\varepsilon_i \in \{
  \pm 1\}$ for $i = 1,\dotsc,m$ such that for almost all $v\in \Places_k$,
  \[
  \zeta_{\G(\fo_v)}^{\wirr}(s) = \prod_{i=1}^m (1-q_v^{a_i - b_i s})^{\varepsilon_i}.
  \]
  Then $\sum\limits_{i=1}^m \varepsilon_i = 0$ and the \textnormal{multisets}
  $\{\!\!\{ b_i : \varepsilon_i = 1\}\!\!\}$ and
  $\{\!\!\{ b_i : \varepsilon_i = -1\}\!\!\}$
  coincide.
\end{cor}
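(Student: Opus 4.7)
\medskip
\noindent\textbf{Proof plan.}
The natural strategy is to deduce this from Corollary~\ref{cor:uniform_red_trivial} applied to
\[
W(X,Y) \;=\; \prod_{i=1}^m (1-X^{a_i}Y^{b_i})^{\varepsilon_i} \;=\; \frac{\prod_{\varepsilon_i = +1}(1-X^{a_i}Y^{b_i})}{\prod_{\varepsilon_i = -1}(1-X^{a_i}Y^{b_i})}.
\]
As written, $W$ has a denominator which is a product of non-zero factors $1-X^aY^b$ with $a \ge 0$, $b \ge 1$, so hypothesis~(i) of Corollary~\ref{cor:uniform_red_trivial} is satisfied; hypothesis~(ii) is satisfied by assumption. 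Hence $W(1,Y) = 1$, which substituting $X = 1$ yields the identity
\[
\prod_{i=1}^m (1-Y^{b_i})^{\varepsilon_i} \;=\; 1 \qquad \text{in } \QQ(Y).
\]
The rest of the argument is an exercise in unique factorisation and requires no further input from the theory of zeta functions.

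\medskip
\noindent Next I would clear denominators to get the polynomial identity
\[
\prod_{\varepsilon_i = +1}(1-Y^{b_i}) \;=\; \prod_{\varepsilon_i = -1}(1-Y^{b_i}) \qquad \text{in } \QQ[Y],
\]
and factor each side into cyclotomic polynomials via $1-Y^b = -\prod_{d \mid b} \Phi_d(Y)$. Since $\QQ[Y]$ is a unique factorisation domain and each $\Phi_d$ is irreducible, comparing the multiplicity of $\Phi_d$ on the two sides gives, for every $d \ge 1$,
\[
\sum_{i:\, d \mid b_i,\, \varepsilon_i = +1} 1 \;=\; \sum_{i:\, d \mid b_i,\, \varepsilon_i = -1} 1,
\]
i.e.\ $\sum_{i:\, d \mid b_i} \varepsilon_i = 0$. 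Setting $f(b) = \#\{i : b_i = b,\, \varepsilon_i = +1\} - \#\{i : b_i = b,\, \varepsilon_i = -1\}$, this says $\sum_{b:\, d \mid b} f(b) = 0$ for all $d \ge 1$, and Möbius inversion on the divisibility poset (equivalently, descending induction starting from the largest $b_i$) yields $f(b) = 0$ for every $b \ge 1$. This is exactly the claim that the two multisets $\{\!\{b_i : \varepsilon_i = +1\}\!\}$ and $\{\!\{b_i : \varepsilon_i = -1\}\!\}$ coincide, and the assertion $\sum_i \varepsilon_i = 0$ is then either the $d = 1$ case of the above or simply the statement that these two multisets have equal cardinality.

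\medskip
\noindent There is no real obstacle in this argument: the only point that warrants care is verifying that $W$ genuinely falls under the hypotheses of Corollary~\ref{cor:uniform_red_trivial} (in particular, that after combining the $\varepsilon_i = -1$ factors into a single denominator one obtains an admissible expression), and that the cyclotomic factorisation argument is applied in $\QQ[Y]$ rather than in the ring of formal power series, where one must first clear denominators to get a genuine polynomial identity.
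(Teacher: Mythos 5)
Your proposal is correct and takes essentially the same route as the paper: both proofs hinge on applying Corollary~\ref{cor:uniform_red_trivial} to the obvious choice of $W$ to obtain the identity $\prod_{i=1}^m(1-Y^{b_i})^{\varepsilon_i}=1$, and then deduce the combinatorial claims from the structure of the roots of unity of the factors $1-Y^{b_i}$. The paper finishes by reading off the vanishing order at $Y=1$ and running an induction on the maximal $b_i$ via primitive roots of unity, which your cyclotomic-factorisation argument with unique factorisation in $\QQ[Y]$ and M\"obius-style descending induction repackages in an equivalent (and slightly more systematic) form.
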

\begin{proof}
  Corollary~\ref{cor:uniform_red_trivial} shows that
  $1 = \prod_{i=1}^m (1-Y^{b_i})^{\varepsilon_i}$.
  By considering the vanishing order of this function in $Y$
  at $1$, we see that $\sum_{i=1}^m \varepsilon_i = 0$.
  Let $c = \max( b_i : \varepsilon_i = 1)$ and $d = \max(b_i : \varepsilon_i = -1)$.
  If $\xi\in \CC$ is a primitive $c$th root of unity,
  then $1-\xi^{b_i} = 0$ for some $i$ with $\varepsilon_i = -1$ whence $c \le
  b_i \le d$; dually, $d \le c$ and the final claim follows by~induction.
\end{proof}

\begin{rem}
  The above results carry over verbatim to the case of
  representation zeta functions of ``principal congruence subgroups''
  $\G^m(\fo_v) := \exp(\fp_v^m \fg \otimes_{\fo} \fo_v)$ attached to an
  $\fo$-form of a perfect Lie $k$-algebra in \cite{AKOV13}.
  For example, by \cite[Thm~E]{AKOV13}, the ordinary representation zeta
  function of $\mathrm{SL}_3^1(\ZZ_p)$ ($p \not= 3$) is $W(p,p^{-s})$ for

  {
    \small
  \[
  W(X,Y) = 
  \frac{(X^2 Y^2 + X Y^2 + Y^3 + X^2 + X Y + Y) \times (X^2 - Y)(X - Y)X^3 }{(1 - X^2
    Y^3) (1 - X Y^2)}
  \]
  }

  \noindent and indeed $W(1,Y) = 1$.
\end{rem}

\section{Applications I: representation zeta functions of unipotent groups}
\label{app:reps}

In this and the following two sections, we record explicit examples of generic
local zeta functions of groups, algebras, and modules of interest which were
computed using the method developed in the present article and its implementation
\textsf{Zeta}~\cite{Zeta}.
The explicit formulae given below, as well as others, are also included with~\textsf{Zeta}.\\

It is well-known that, up to isomorphism, unipotent algebraic groups over $k$
correspond 1--1 to finite-dimensional nilpotent Lie $k$-algebras,
see \cite[Ch.\ IV]{DG70}.
Nilpotent Lie algebras of dimension at most $6$ over any field of characteristic
zero were first classified by Morozov~\cite{Mor58};
various alternative versions of this classification have been obtained.

As one of the main applications of the techniques developed in the present
article, for an arbitrary number field $k$,
we can compute the generic local (twist) representation zeta functions 
associated with all unipotent algebraic groups of dimension at most $6$
over~$k$.
The results of these computations are documented in Table~\ref{t:reps6} (p.\ \pageref{t:reps6}).
The structure of Table~\ref{t:reps6} mimics the list of associated topological
representation zeta functions in~\cite[Table~1]{unipotent}.
In detail, the first column lists the relevant Lie algebras using de~Graaf's
notation~\cite{dG07}; an algebra $L_{d,i}$ has dimension $d$.
For each Lie algebra $\fg$, we choose an $\fo$-form $\G$ of the unipotent
algebraic group over $k$ associated with $\fg$.
The second column in Table~\ref{t:reps6} contains formulae for the
representation zeta functions of the groups $\G(\fo_v)$ which are valid for
almost all $v\in \Places_k$ (depending on $\G$).
Note that Corollary~\ref{cor:prod_riemann} applies to the majority of
examples in Table~\ref{t:reps6}.
As we previously documented in \cite[\S 6]{unipotent}, generic local
representation zeta functions associated with various Lie algebras in
Table~\ref{tab:six} were previously known (but sometimes only recorded for $k =
\QQ$), as indicated in the third column.
For the convenience of the reader, the more detailed references to the
literature from \cite[Tab.\ 1]{unipotent} are reproduced in
Remark~\ref{rem:known_reps}.

\begin{rem}[From $\QQ$ to $k$]
  Apart from the four infinite families (see the following remark), all Lie
  algebras in Table~\ref{t:reps6} are defined over $\QQ$.
  By the invariance of \eqref{eq:main_denef} under local base extensions
  (Theorem~\ref{thm:denef_formulae}), it thus suffices to compute
  associated generic local representation zeta functions
  for $k = \QQ$.
\end{rem}

\begin{rem}[Computations for infinite families]
The method for computing generic local zeta functions developed in this article
takes as input a global object such as a nilpotent Lie $k$-algebra.
In order to carry out computations for the four infinite families 
$L_{6,19}(a)$, $L_{6,21}(a)$, $L_{6,22}(a)$, and $L_{6,24}(a)$ in
Table~\ref{tab:six}, additional arguments are required.

First, as explained in \cite{dG07}, we are free to multiply the parameters $a$
from above by elements of $(k^\times)^2 \le k^\times$ without changing the $k$-isomorphism
type of the Lie algebra, $\bm\fg(a)$ say, in question.
We may thus assume that $0 \not= a \in \fo$ in the following.
The definition of $\bm\fg(a)$ in \cite{dG07} then provides us with
a canonical $\fo$-form, $\fg(a)$ say, of $\bm\fg(a)$ which is in fact defined over $\ZZ[a]$.
Let $\G_a$ be an $\fo$-form of the unipotent algebraic group over $k$ associated
with $\bm\fg(a)$.
As explained in \cite[\S 2]{SV14}, the structure constants of $\fg(a)$
(with respect to its defining basis from \cite{dG07})
give rise to a formula for $\zeta_{\G_a(\fo_v)}^{\wirr}(s)$ in terms of certain
explicit $\fo$-defined $p$-adic integrals
(see \cite[Cor.\ 2.11]{SV14});
this formula is valid for almost all $v\in \Places_k$.

It is an elementary exercise to verify that
if $\bm\fg = L_{6,19}(a)$ or $\bm\fg = L_{6,21}(a)$, then
the polynomials featuring in the aforementioned integral formulae for
$\zeta_{\G_a(\fo_v)}^{\wirr}(s)$ are all monomials in $a$ and the variables
$Y_1,\dotsc,Y_d$ (in the notation of \cite[\S 2.2]{SV14} and up to signs).
It follows that up to excluding finitely many $v\in \Places_k$,
$\zeta_{\G_a(\fo_v)}^{\wirr}(s)$ does not depend on $a$.
We may therefore simply carry out our calculation for $k = \QQ$ and $a = 1$, say.

Let $\bm\fg(a)$ be $L_{6,22}(a)$ or $L_{6,24}(a)$.
Another simple calculation reveals that (again up to signs) a single
non-monomial polynomial occurs in the associated integral formulae from above,
namely $Y_1^2 - a Y_2^2$.
For any fixed $a$, 
by applying the procedure from \cite[\S 5.4]{unipotent} as well as the steps
described in the present article,
we produce a rational function $W_a(X,Y,Z)$ such that
$\zeta_{\G_a(\fo_v)}^{\wirr}(s) = W_a(q_v,q_v^{-s}, c_a(v))$ for almost all $v\in
\Places_k$,
where $c_a(v)$ denotes the number of roots of $X^2-a$ in $\fo/\fp_v$;
it is well-known
that if $a \not\in (k^\times)^2$, then
for almost all $v\in \Places_k$, $c_a(v) = 0$ or $c_a(v) = 2$ according to whether $\fp_v$ remains inert
or splits in $k(\sqrt a)$, respectively.
The critical observation (which follows easily from \cite[\S 5.4]{unipotent})
is that $W := W_a$ is independent of $a$ and also of $k$.
We may thus compute $W$ explicitly by e.g.\ taking $k = \QQ$ and $a = 2$.
\end{rem}

\begin{rem}
  \label{rem:known_reps}
  Explicit references for the known instances of generic local representation
  zeta functions in Table~\ref{tab:six} are as follows (cf.~\cite[Tab.\ 1]{unipotent}):
  \begin{center}
  \begin{tabular}{ll|ll}
    algebra & reference & algebra & reference \\\hline
  $L_{3,2}$ & \cite[Thm~5]{NM89}) &
  $L_{4,3}$ & $M_3$ \cite[(4.2.24)]{Ezzat} \\
  $L_{5,4}$ & $B_4$ \cite[Ex.~6.3]{Snocken}) &
  $L_{5,5}$ & $G_{5,3}$ \cite[Tab.~5.2]{Ezzat}) \\
  $L_{5,7}$ & $M_4$ \cite[(4.2.24)]{Ezzat} &
  $L_{5,8}$ & $M_{3,3}$ \cite[(5.3.7)]{Ezzat} \\
  $L_{5,9}$ & $F_{3,2}$ \cite[Tab.~5.2]{Ezzat}) & &  $=G_3$
    \cite[Ex.~6.2]{Snocken})\\
  $L_{6,10}$ & $G_{6,12}$ \cite[Tab.~5.2]{Ezzat}) &
  $L_{6,18}$ & $M_5$ \cite[(4.2.24)]{Ezzat} \\
  $L_{6,19}(0)$ & $G_{6,7}$ \cite[Tab.~5.2]{Ezzat}) &
  $L_{6,19}(1)$ & $G_{6,14}$ \cite[Tab.~5.2]{Ezzat} 
  \\
  $L_{6,22}(0)$ & \cite[Ex.~6.5]{Snocken} &
  $L_{6,22}(a)$ ($a \in k^\times \!\setminus\! (k^\times)^2$)\!\!\!\!\!\! &
  \cite{Ezz14}
  \\
  $L_{6,25}$ & $M_{4,3}$ \cite[(5.3.7)]{Ezzat} & 
  $L_{6,26}$ & $F_{1,1}$ \cite[Thm~B]{SV14}.
  \end{tabular}
  \end{center}

  The author would like to emphasise that all the formulae in
  Table~\ref{tab:six} were obtained using the method developed here.
  In particular, our computations  provide independent confirmation of the
  aforementioned (sometimes computer-assisted but predominantly manual and ad
  hoc) calculations found in the literature.
\end{rem}

For an example in dimension $> 6$,
recall from \S\ref{ss:reps} that $\Uni_n$ denotes the group scheme of upper unitriangular
$n\times n$-matrices.
Using the notation from \cite{dG07} as in Table~\ref{t:reps6},
$\Uni_3 \otimes \QQ$ (the Heisenberg group) and $\Uni_4 \otimes \QQ$
are the unipotent algebraic groups 
over $\QQ$ associated with the Lie algebras $L_{3,2}$ and $L_{6,19}(1)$, respectively.
The following result obtained using the method from the present article
illustrates that the simple shapes of the corresponding local representation
zeta functions in Table~\ref{t:reps6} may mislead. 

\begin{thm}
  For almost all primes $p$ and all finite extensions $K/\QQ_p$,
  $$\zeta_{\Uni_5(\fO_K)}^{\wirr}(s) = W(q_K,q_K^{-s}),$$ where
  \begin{align*}
    W =\,\, & \bigl(X^{10}Y^{10} - X^9 Y^9 - 2 X^9 Y^8 + X^9 Y^7 + X^8 Y^8 - X^7 Y^7 - 2
    X^7Y^6 + X^7Y^5 \\ & + 6 X^6 Y^6 - 4 X^5 Y^6 - 4 X^5 Y^4 + 6 X^4Y^4 + X^3
    Y^5 - 2 X^3 Y^4 - X^3 Y^3 \\ & + X^2 Y^2 + X Y^3 - 2 X Y^2 - XY + 1\bigr)
    \times \bigl(1 - Y^3\bigr) \times \bigl(1 - Y\bigr)
    \\ & / \bigl(
    (1 - X^6 Y^4)
    (1 - X^3Y^3)
    (1 - XY^3)
    (1 - X^2Y^2)
    (1 - X^2Y)^2
    \bigr).
  \end{align*}
\end{thm}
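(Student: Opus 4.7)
The plan is to realise this as a direct application of the machinery assembled in \S\S\ref{s:explicit_formulae}--\ref{s:ratfun} to the nilpotent Lie $\QQ$-algebra $\fu_5$ of strictly upper triangular $5\times 5$-matrices, together with its natural $\ZZ$-form $\G = \Uni_5$. By Theorem~\ref{thm:denef_formulae}(iii) we already know that the family $(\zeta_{\Uni_5(\fO_K)}^{\wirr}(s))_K$ is of Denef type, so the real content of the theorem is the explicit determination of the $V_i$ and $W_i$ in \eqref{eq:main_denef} and the subsequent simplification of the resulting sum.

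First I would feed the standard basis of $\fu_5$ into the integral formula of Stasinski--Voll (\cite[Cor.~2.11]{SV14}), as used throughout \S\ref{app:reps}, to express $\zeta_{\Uni_5(\fO_K)}^{\wirr}(s)$ as a $p$-adic integral associated with the commutator matrix of~$\fu_5$. The first nontrivial step is then to verify Assumption~\ref{A1}: that the non-degeneracy hypotheses underpinning the method of \cite[\S 5.4]{unipotent} are satisfied for this particular family of polynomials. This is a finite, mechanical check involving the Newton polytopes of the relevant minors; assuming it succeeds, the method outputs an explicit formula of the form \eqref{eq:main_denef} in which the $V_i$ are subvarieties of algebraic tori over $\QQ$ (Assumption~\ref{A2}) and the $W_i$ are described combinatorially via monomial substitutions into generating functions of rational half-open cones (Assumption~\ref{A3}).

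Next I would apply the symbolic point-counting procedure of \S\ref{s:count} to each $V_i$. The fact that the final answer $W(X,Y)$ involves only two variables (no auxiliary $c_i$ for non-split subvarieties) is a prediction: concretely, one expects the recursion based on Lemma~\ref{lem:cell} and torus-factor splitting to reduce every $\#\bar V_i(\fo/\fp_v)$ to a polynomial in $q_v$. With these polynomials in hand, Barvinok's algorithm \cite{Bar94,BP99} combined with the efficient monomial substitution algorithm of \cite{BW03} yields each $W_i$ as a short sum of rational functions of the form \eqref{eq:cycrat}. The main obstacle here is purely computational: the sheer size of the combinatorial data produced in intermediate steps. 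I would follow the two-stage summation strategy of \S\ref{s:ratfun}, using Woodward's denominator-pruning heuristic first and then a common-denominator pass parallelised over the numerators.

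Finally, having obtained a candidate $W(X,Y)$, I would apply the sanity check provided by Corollary~\ref{cor:uniform_red_trivial}: the denominator displayed in the theorem is a product of factors $1-X^aY^b$ with $a \ge 0$ and $b \ge 1$, so correctness of the computation forces $W(1,Y) = 1$. Substituting $X = 1$ into the candidate numerator and comparing vanishing orders in $Y$ against Corollary~\ref{cor:prod_riemann} gives a quick consistency test (and, in case of a discrepancy, localises a bug either in the integration step or in the final summation). The hardest part of the proof is therefore not mathematical but infrastructural: ensuring that the cone decompositions produced by the method of \cite{unipotent} are small enough for \LattE{} to handle, and that the resulting tens of thousands of summands of the shape \eqref{eq:cycrat} can be combined without exhausting memory during the common-denominator step.
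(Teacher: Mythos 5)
Your proposal is correct and follows essentially the same route the paper uses: the theorem has no written-out proof in the paper because it is reported as the output of running the pipeline of \S\S\ref{s:explicit_formulae}--\ref{s:ratfun} (via \textsf{Zeta}/\LattE{}) on the $\ZZ$-form $\Uni_5$ (equivalently its Lie algebra of strictly upper triangular $5\times 5$-matrices), exactly as you describe, including the use of \cite[Cor.~2.11]{SV14}, Assumptions~\ref{A1}--\ref{A3}, the point-counting recursion of \S\ref{s:count}, Barvinok-type generating functions with the substitution algorithm of \cite{BW03}, and the final common-denominator summation. Your observation that $\Uni_5$ succeeds while $\Uni_6$ does not (because Assumption~\ref{A1} fails there) is also consistent with the paper's remark immediately following the theorem, and the $W(1,Y)=1$ check against Corollary~\ref{cor:uniform_red_trivial} is the intended sanity test and does indeed hold for the displayed formula.
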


The topological representation zeta function of $\Uni_6$
cannot be computed using \cite{unipotent}.
Consequently, the corresponding local zeta functions cannot be computed using
the method developed here.

Observe that the numerator of each $W(X,Y)$ in Table~\ref{tab:six} is divisible
by a polynomial of the form $1-Y^e$.
Experimental evidence provided by these examples and those in \textsf{Zeta}
suggests that the following $p$-adic version of \cite[Qu.~7.4]{unipotent} might
have a positive answer.

\begin{question}
  Let $\G$ be an $\fo$-form of a non-abelian unipotent algebraic group over~$k$.
  Does the meromorphic continuation of $\zeta_{\G(\fo_v)}^{\wirr}(s)$ always vanish at zero
  for almost all $v \in \Places_k$?
\end{question}

\begin{rem*}
  By~\cite[Cor.\ 2]{GJK14}, if $p$ is odd, then the meromorphic continuation of
  the ordinary (=~non-twisted) representation zeta function of a
  compact FAb $p$-adic analytic group vanishes at $-2$.
\end{rem*}

\begin{savenotes}
\begin{table}[H]
  \small
  \centering
  \begin{tabular}{l|ll}
   \hline
   Lie algebra & $W(X,Y)$ s.t.\ $\zeta_{\G(\fo_v)}^{\wirr}(s) =
   W(q_v,q_v^{-s})$ for almost all $v\in \Places_k$ $\phantom{1^{1^{1^1}}}$\!\!\!\!\!\!\!\!  &
   known \\
    \hline
    abelian & $1$ & $\checkmark$\\
    $L_{3,2}$ & $(1-Y)/(1-XY)$ & $\checkmark$\\ 

    $L_{4,3}$ & $(1-Y)^2/(1-XY)^2$ & $\checkmark$\\ 

    $L_{5,4}$ & $(1-Y^2)/(1-XY^2)$ & $\checkmark$ \\ 

    $L_{5,5}$ &
    $(1-XY^2)(1-Y)/\bigl((1-X^2Y^2)(1-XY)\bigr)$ & $\checkmark$\\ 

    $L_{5,6}$ &
    $(1-X^2Y^2)(1-Y)^2/
    \bigl((1-X^3Y^2)(1-XY)^2 \bigr)$ \\

    $L_{5,7}$ & $(1-Y)^2/\bigl((1-X^2Y)(1-XY)\bigr)$ & $\checkmark$\\ 

    $L_{5,8}$ &  $(1-Y)/(1-X^2 Y)$ & $\checkmark$\\ 

    $L_{5,9}$ & $(1-Y)^2/\bigl( (1-X^2Y)(1-XY) \bigr)$ & $\checkmark$\\ 

    $L_{6,10}$& $(1-Y^2)(1-Y) /\bigl( (1-XY^2)(1-XY) \bigr)$  & $\checkmark$\\

    $L_{6,11}$&
    $\frac{(- X^3Y^4 + X^3Y^3 - 2 X^2Y^3 + 3 X^2Y^2 - 3 XY^2 + 2 XY - Y + 1)(1 -Y)}{(1-X^4Y^3)(1 - X^2Y^2)}$ \\

    $L_{6,12}$& $(1-X^2Y^2)(1-Y)^2 / \bigl( (1-X^3Y^2)(1-XY)^2\bigr)$ \\

    $L_{6,13}$& 
    $\frac{(X^4Y^6 + X^4Y^5 - X^3Y^4 - 2 X^2Y^3 - XY^2 + Y + 1)(1 - Y)^2}{(1-X^3Y^3)(1 - X^2Y^2)(1-XY^2)(1- XY)}$
      \\

    $L_{6,14}$&  
    $\frac{(X^4Y^6 - X^4Y^4 + X^3Y^5 - 2 X^2Y^3 + XY  - Y^2 + 1)(1- Y)^2}
    {(1-X^3Y^3)(1 - X^3Y^2)(1 - XY^2)(1 - XY)}$
    \\

    $L_{6,15}$ &
    $\frac{(-X^5Y^4 - X^4Y^3 + X^3Y^2 - X^2Y^2  + XY + 1)(1 - Y)^2}
    {(1 - X^5Y^3) (1 - X^3Y^2)(1 - XY)}$
    \\

    $L_{6,16}$ &
    $(1-Y^2)(1-Y)^2 / \bigl( (1-X^2Y)(1-XY^2)(1-XY) \bigr)$  \\

    $L_{6,17}$&  
    $ (1-X^3Y^2)(1-Y)^2/\bigl( (1-X^4Y^2)(1-X^2Y)(1-XY)\bigr)$
    \\
    
    $L_{6,18}$& $(1-Y)^2/\bigl( (1-X^3Y)(1-XY)\bigr)$ & $\checkmark$\\

    $L_{6,19}(0)$  & $(1-Y)^2/\bigl( (1-X^2Y)(1-XY)\bigr)$ & $\checkmark$\\

    $L_{6,19}(a)$ ($a\in k^\times$) \!\!\!\!&
    $(1-Y^2)(1-Y)/\bigl((1-X^2Y)(1-XY^2)\bigr)$ & $\checkmark (a=1)$\\

    $L_{6,20}$ &
    $ (1-XY^2)(1-Y) /\bigl((1-X^2Y)(1-X^2Y^2)\bigr)$
    \\

    $L_{6,21}(0)$ &
    $(1-Y)^2/(1-X^2Y)^2$  \\

    $L_{6,21}(a)$ ($a\in k^\times$)\!\!\!\! &
    $ (1-X^2Y^2)(1-Y)^2 /\bigl( (1-X^3Y^2)(1-X^2Y)(1-XY)\bigr)$ 
    \\

    $L_{6,22}(0)$ &
    $ (1-X^2Y^2)(1-Y)/\bigl((1-X^3Y^2)(1-XY)\bigr)$ & $\checkmark$\\

    $L_{6,22}(a)$ 
    
    & 
    if $\fp_v$ splits in $k(\sqrt a)$:\,\,\,\,\,
    $(1-Y)^2/(1-XY)^2$  & $\checkmark$ \\
    \,\,\,\,\,\,($a\in k^\times \!\setminus\! (k^\times)^2$)\footnote{For $a \in
      (k^\times)^2$, $L_{6,22}(a) \approx L_{3,2}^2$ decomposes.}
    & 
    if $\fp_v$ is inert in $k(\sqrt a)$: $(1-Y^2)/(1-X^2Y^2)$ & $\checkmark$
    \\

    $L_{6,23}$ & 
    $ (1-X^3Y^2)(1-Y)/\bigl((1-X^4Y^2)(1-X^2Y)\bigr)$
    \\

    $L_{6,24}(0)$ & 
    $\frac{(X^4Y^4  - X^4Y^3 + X^3Y^3  - 2 X^2Y^2 + XY - Y + 1)(1 - Y)}
    {(1 - X^3Y^2)^2 (1 - XY)}$
    \\

    $L_{6,24}(a)$ 
     &
    if $a \in (k^\times)^2$ or $\fp_v$ splits in $k(\sqrt a)$: 
    $\frac{(- XY^2 + 2 XY  - 2 Y + 1) (1 - Y)}{(1 - X^3Y^2)(1 - XY)}$
    \\
    \,\,\,\,\,\,($a\in k^\times$)
    &
    if $a \not\in (k^\times)^2$ and $\fp_v$ is inert in $k(\sqrt a)$:
    $\frac{(1-XY^2)(1-Y)}{(1-X^3Y^2)(1-XY)}$
    \\

    $L_{6,25}$&  $(1-XY)(1-Y)/(1-X^2Y)^2$ & $\checkmark$\\
    
    $L_{6,26}$& $(1-Y)/(1-X^3Y)$ & $\checkmark$\\ 
    \hline
  \end{tabular}
  \caption{Generic local representation zeta functions
    associated with all indecomposable unipotent algebraic groups of dimension 
    at most six over a number field}
  \label{t:reps6}
  \label{tab:six}
\end{table}
\end{savenotes}

\section{Applications II: classical subobject zeta functions}

\subsection{Subalgebras: $\Gl_2(\QQ)$}
\label{ss:gl2}

The first computations of the subalgebra zeta functions of $\Sl_2(\ZZ_p)$
are due, independently, to du~Sautoy~\cite{dS00} (for $p \not= 2$, relying
heavily on~\cite{Il99}) and White~\cite{Whi00}.
These zeta functions have later been confirmed by different means in
\cite{dST02}, \cite[\S 4.2]{KV09}, and \cite[\S 7.1]{topzeta} (for $p \not= 2$).
Up until now, $\Sl_2(\QQ)$ has remained the sole example of an insoluble Lie
$\QQ$-algebra whose generic local subalgebra zeta functions have been computed.
Using the method developed in the present article, we obtain the following.

\begin{thm}
  \label{thm:gl2}
  For almost all primes~$p$ and all finite extensions $K/\QQ_p$,
\begin{align*}
  \zeta_{\Gl_2(\fO_K)}^\le(s) & = W(q_K^{\phantom{-s}}\!\!,q_K^{-s}),
\end{align*}
where
\begin{align*}
  W(X,Y) =\, & \bigl(- X^{8} Y^{10} -  X^{8} Y^{9} -  X^{7} Y^{9} - 2 X^{7} Y^{8} + X^{7} Y^{7} -  X^{6} Y^{8} \\
& - X^{6} Y^{7} + 2 X^{6} Y^{6} - 2 X^{5} Y^{7} + 2 X^{5} Y^{5} - 3 X^{4} Y^{6} + 3 X^{4} Y^{4} \\
& -2 X^{3} Y^{5} + 2 X^{3} Y^{3} - 2 X^{2} Y^{4} + X^{2} Y^{3} + X^{2} Y^{2} -  X Y^{3} \\
& + 2 X Y^{2} + X Y + Y + 1 \bigr)/ \Bigl((1- X^7 Y^6) (1-X^3Y^3)(1-X^2Y^2)^2(1-Y)\Bigr).
\end{align*}
\end{thm}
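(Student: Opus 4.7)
My plan is to derive the claimed identity by running the procedure of \S\S\ref{s:explicit_formulae}--\ref{s:ratfun} with input $\cA = \Gl_2(\QQ)$, viewed as a Lie $\QQ$-algebra under the commutator bracket, and $\sA = \Mat_2(\ZZ)$ as a $\ZZ$-form. Since Theorem~\ref{thm:denef_formulae}(i) asserts that the family $\bigl(\zeta_{\sA\otimes \fO_K}^\le(s)\bigr)_K$ is of Denef type and since the presentation \eqref{eq:main_denef} is stable under local base extension, it suffices to construct such a formula for $k = \QQ$; the uniformity of $W$ in $q_K$ then extends the result to arbitrary finite extensions of $\QQ_p$.

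First I would write $\zeta_{\sA \otimes \ZZ_p}^\le(s)$ as the four-dimensional $p$-adic cone integral of \cite[\S 5]{dSG00} (the subalgebra analogue of Theorem~\ref{thm:graded_int}): the integration domain is the space of upper triangular $4 \times 4$-matrices $M(\xx)$ over $\ZZ_p$, and the divisibility conditions cutting out the relevant subset of $T$ are obtained by testing that the row span of $M(\xx)$ is closed under the bracket on $\Mat_2(\ZZ_p)$ against the standard basis $e_{11},e_{12},e_{21},e_{22}$. This yields a toric datum in the sense of \cite[\S 3]{topzeta2}. The first key step is then to verify Assumption~\ref{A1}, i.e.\ that the non-degeneracy condition of \cite[\S 4.2]{topzeta} holds after admissible simplification of this datum; the author's successful treatment of the closely related algebra $\Sl_2(\QQ)$ in \cite[\S 7.1]{topzeta} makes this plausible.

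Granted Assumption~\ref{A1}, the recipe of \cite[\S 4]{topzeta2} produces an explicit formula \eqref{eq:main_denef} in which each $V_i$ is a closed subvariety of a torus over $\QQ$ cut out by Laurent polynomials (Assumption~\ref{A2}) and each $W_i$ is described combinatorially as a monomial substitution applied to a generating function $\cZ^{\cC_0,\cP_1,\dotsc,\cP_m}$ attached to an arrangement of rational half-open cones (Assumption~\ref{A3}). To each $V_i$ I would then apply the symbolic point-counting of \S\ref{s:count}: heuristically, since $\Gl_2$ differs from $\Sl_2$ only by an abelian centre, one expects no quadratic extension to intrude and each $\#\bar V_i(\fO_K/\fP_K)$ to collapse to a polynomial in $q_K$, which in turn explains the existence of a uniform $W$. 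Next, Barvinok's algorithm combined with the efficient substitution machinery of \cite[\S 2]{BW03} rewrites each $W_i$ as a short sum of bivariate functions of the form \eqref{eq:cycrat}, and the common-denominator strategy of \S\ref{s:ratfun} performs the final additive simplification to yield the closed form $W(X,Y)$ in the statement.

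I expect the main obstacle to be computational rather than conceptual: the toric datum for $\Gl_2$ is materially larger than that for $\Sl_2$, so Barvinok's algorithm returns a sum with many summands whose candidate poles $1 - X^aY^b$ largely cancel before the concise denominator $(1-X^7Y^6)(1-X^3Y^3)(1-X^2Y^2)^2(1-Y)$ emerges, and managing this cancellation requires precisely the two-stage summation scheme of \S\ref{s:ratfun}. As sanity checks, I would verify that $W(X,Y)$ is regular at $(q,Y)$ for every integer $q \ge 1$ (a necessary feature of any Denef-type presentation), that the numerator has the degree forced by the dimensions involved, and that the limit $s \to \infty$ counts the unique subalgebra of full rank correctly; compatibility with the generic local zeta functions of $\Sl_2$ from \cite{dS00,Whi00,dST02,KV09,topzeta} under the decomposition $\Gl_2 = \Sl_2 \oplus \QQ\dtimes\mathrm{Id}$ provides a further consistency test.
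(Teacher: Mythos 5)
Your plan follows the same pipeline the paper itself uses: Theorem~\ref{thm:gl2} is established not by a hand argument but by running the machinery of \S\S\ref{s:explicit_formulae}--\ref{s:ratfun} (toric datum, Assumptions~\ref{A1}--\ref{A3}, symbolic point counting, Barvinok plus monomial substitution, two-stage final summation), and your description of that pipeline, including the reduction to $k=\QQ$ and the a posteriori checks, matches the paper's route. The one substantive divergence is the input you feed the method: you propose to run it directly on $\Mat_2(\ZZ)$ with the standard basis $e_{11},e_{12},e_{21},e_{22}$, whereas the paper explicitly records that its computation used the splitting $\Gl_2(R)\approx\Sl_2(R)\oplus R$ for rings in which $2$ is invertible (a legitimate move for a generic statement, since changing the $\ZZ$-form alters at most finitely many local factors). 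This difference is not cosmetic: the method is not an algorithm, and Assumption~\ref{A1} --- non-degeneracy of the associated toric datum after simplification --- is sensitive to the chosen presentation, so whether the direct $\Mat_2$ datum is admissible is precisely the unverified hinge of your plan; ``plausible by analogy with $\Sl_2$'' is not a verification, and the paper's use of the split form is plausibly what makes both the non-degeneracy and the sheer size of the computation manageable. Either verify Assumption~\ref{A1} for your $\Mat_2$ datum, or adopt the split form, in which case your proposed ``consistency test'' against $\Sl_2\oplus\QQ$ is no longer a check but the computation itself. Finally, your sanity checks are weaker than those the paper records: regularity at $(q,Y)$ and $W(X,0)=1$ are nearly vacuous, while the paper verifies the functional equation $W(X^{-1},Y^{-1})=X^{6}Y^{4}\,W(X,Y)$, agreement with the topological subalgebra zeta function of $\Gl_2(\QQ)$ from \cite[\S 7.3]{topzeta}, and agreement of $W(1,Y)$ with Evseev's reduced zeta function; you would do well to adopt these instead.
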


The topological subalgebra zeta function $\zeta_{\Gl_2(\QQ),\topo}^\le(s) = (27s-14)/(6(6s-7)(s-1)^3s)$
of $\Gl_2(\QQ)$ was first recorded in \cite[\S 7.3]{topzeta} (relying on
techniques from~\cite{topzeta2}); the result given there is consistent with Theorem~\ref{thm:gl2}.
Theorem~\ref{thm:gl2} is particularly interesting since
the simple shape of $\zeta_{\Gl_2(\QQ),\topo}^\le(s)$ might seem indicative of a local
zeta function which is a product of ``cyclotomic factors'' $1 - q_K^{a - bs}$ or
their inverses, which is in fact not the case.

We note that the computations underpinning Theorem~\ref{thm:gl2}
used that $\Gl_2(R) \approx \Sl_2(R) \oplus R$ for any commutative
ring $R$ in which $2$ is invertible; here we regarded $R$ as an abelian Lie
$R$-algebra.
Theorem~\ref{thm:gl2} therefore also illustrates the potentially wild effect of
direct sums on subalgebra zeta functions;
in contrast, \cite{dSW08} contains examples of subalgebra and ideal zeta
functions associated with nilpotent Lie algebras which are very well-behaved
under this operation.

The rational function $W(X,Y)$ in Theorem~\ref{thm:gl2} satisfies the functional
equation $$W(X^{-1},Y^{-1}) = X^6 Y^4 W(X,Y)$$ predicted by \cite[Thm~A]{Vol10}
(cf.\ \cite[\S 5]{stability}).
Moreover, the reduced subalgebra zeta function of $\Gl_2(\QQ)$ is $W(1,Y) =
(1-Y^3)/\bigl((1-Y)^3(1-Y^2)^2\bigr)$, as predicted 
by \cite[Thm~3.3]{Evs09} (using the fact that the reduced subalgebra zeta function
of $\Sl_2(\ZZ)$ is $(1-Y^3)/\bigl((1-Y)^2(1-Y^2)^2\bigr)$ (by \cite[Prop.~4.1]{Evs09})).

\subsection{Subalgebras: $k[T]/T^n$ for $n \le 4$}

Most examples of local subalgebra zeta functions in the literature 
are concerned with (often nilpotent) Lie algebras.
An important exception is given by the subalgebra zeta functions of
$\ZZ_p^n$ endowed with component-wise multiplication;
explicit formulae for these zeta functions are known for $n \le 3$ (see~\cite{Nak96}).
In the following, we consider another natural family of associative, commutative
algebras, $k[T]/T^n$, for $n \le 4$.

Due to the simplicity of the associated ``cone integrals'' as in \cite{dSG00},
the formulae for $n = 2, 3$ recorded in the following can be obtained by hand with little
difficulty.
Using a substantially more involved computation,
the techniques developed in the present article also allow us to consider the
case $n = 4$.
For $n = 5$, the author's techniques for computing topological subalgebra zeta
functions do not apply, i.e.\ Assumption~\ref{A1} is violated.

\begin{thm}
  \label{thm:X4}
  For almost all primes~$p$ and all finite extensions $K/\QQ_p$,
  writing $q = q_K$,
  \begin{align*}
  \zeta_{\fO_K[T]/T^2}^\le(s) & = 
  \frac{1-q^{-2s}}{(1-q^{-s})^2(1-q^{1-2s})},  \\
  \zeta_{\fO_K[T]/T^3}^\le(s) & = F_{\QQ[T]/T^3}(q,q^{-s})
  \times \frac{1-q^{2-4s}}
  {(1-q^{4-5s})(1-q^{2-3s})^2(1-q^{1-2s})(1-q^{-s})}, \text{ and } \\
  \zeta_{\fO_K[T]/T^4}^\le(s) & = F_{\QQ[T]/T^4}(q,q^{-s})
  /\bigl( 
  (1-q^{13-13s})(1-q^{9-9s})(1-q^{8-8s})(1-q^{6-6s})^2 \\& \quad\quad \quad\quad\quad\quad\quad\quad\quad\times(1-q^{5-6s})
  (1-q^{5-5s})(1-q^{3-4s}) (1-q^{-s})
  \bigr),
  \end{align*}
  where
  $F_{\QQ[T]/T^3} = -X^{4} Y^{7} - X^{4} Y^{6} - X^{3} Y^{5} + X^{3} Y^{4} - X^{2} Y^{4} +
  X^{2} Y^{3} - X Y^{3} + X Y^{2} + Y + 1$
  and $F_{\QQ[T]/T^4} = 1 + \dotsb - X^{49}Y^{54} \in \QQ[X,Y]$ is given in Appendix~\ref{s:numerators}.
\end{thm}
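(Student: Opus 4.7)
The plan is to apply the computational framework of \S\S\ref{s:explicit_formulae}--\ref{s:ratfun} to the associative commutative $\fo$-algebra $\sA_n = \fo[T]/T^n$ with distinguished basis $(1, T, \dotsc, T^{n-1})$. With this basis the multiplication table is monomial, so the $p$-adic cone integral representation of $\zeta_{\sA_n\otimes\fo_v}^\le(s)$ from \cite{dSG00} (the ungraded analogue of Theorem~\ref{thm:graded_int}) is immediately explicit. One first verifies that the method of \cite[\S 4]{topzeta2} succeeds on $\sA_n$ (Assumption~\ref{A1}); this holds for $n \le 4$ but fails for $n = 5$, which is precisely why the theorem stops at $n = 4$. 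Once Assumption~\ref{A1} is established, the pipeline produces a formula of the shape~\eqref{eq:main_denef} satisfying Assumptions~\ref{A2}--\ref{A3}.

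For $n = 2$ and $n = 3$ the cone conditions describing subalgebras (cf.\ Remark~\ref{rem:cone_conditions}) reduce to a small system of divisibilities among the upper-triangular coordinates $x_{ij}$. A monomial change of variables in the spirit of the map $\varphi$ in the proof of Proposition~\ref{prop:maximal_class} decouples these into independent geometric series, which sum to the stated closed forms. These two cases are effectively hand computations and serve mainly as warm-ups.

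For $n = 4$ the full pipeline must be run. The steps are: (i) symbolically enumerate rational points on the relevant subvarieties of tori following \S\ref{s:count}---since the multiplication of $\sA_4$ is monomial, all such counts turn out to be polynomials in $q_K$, so no auxiliary variables $c_i$ are needed; (ii) apply Barvinok's algorithm through \LattE{}~\cite{LattE}, together with the efficient substitution procedure of \cite[\S 2]{BW03}, to write each $W_i$ as a sum of rational functions of the form~\eqref{eq:cycrat}; and (iii) carry out the two-stage final summation of \S\ref{s:ratfun}, first using Woodward's trick to collapse distinguished denominator factors and then combining the remainder over a single common denominator. The outcome is the pair $(F_{\QQ[T]/T^4}, \text{denominator})$ recorded in the statement, with $F_{\QQ[T]/T^4}$ too large to reproduce inline and thus relegated to Appendix~\ref{s:numerators}.

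The main obstacle, by a wide margin, is step (iii) for $n = 4$: the intermediate expression is a very large sum of short rational functions whose denominators involve many distinct factors $1 - X^aY^b$ that cancel only after global reduction, so naive rational-function arithmetic in a computer algebra system is wholly inadequate and the engineering described in \S\ref{s:ratfun} is essential. As sanity checks on the final output I would verify the Voll-type functional equation predicted by \cite[Thm~A]{Vol10}, confirm that the termwise ``$p \to 1$'' limit recovers the topological subalgebra zeta function of $\QQ[T]/T^n$ computed independently via \cite{topzeta,topzeta2}, and check compatibility with the reduced subalgebra zeta function in the sense of Evseev~\cite{Evs09}.
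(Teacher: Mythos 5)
Your proposal matches the paper's own treatment: the cases $n=2,3$ are dispatched by hand from the cone-integral description, the case $n=4$ is obtained by running the full pipeline of \S\ref{s:count}--\S\ref{s:ratfun} (point counts, Barvinok/\LattE{} with monomial substitution, two-stage final summation), the restriction to $n\le 4$ is explained exactly as in the paper by the failure of Assumption~\ref{A1} at $n=5$, and your sanity checks (the functional equation of \cite[Thm~A]{Vol10}, the topological limit, and Evseev's reduced zeta function) are the same ones the paper invokes. No substantive difference from the paper's argument.
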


The topological subalgebra zeta function of $\QQ[T]/T^4$ can be found in~\cite[\S 9.2]{topzeta2}.
As in \S\ref{ss:gl2}, the zeta functions in Theorem~\ref{thm:X4} satisfy the
functional equations predicted by \cite[Thm~A]{Vol10} and the associated reduced
subalgebra zeta functions coincide with those computed using~\cite{Evs09};
while Evseev only considered reduced zeta functions of \itemph{Lie}
algebras, his reasoning also applies to more general, possibly non-associative,
algebras.
For example, using Theorem~\ref{thm:X4}, after considerable cancellation,
we find the reduced subalgebra zeta function 
of $\QQ[T]/T^4$ to be $(Y^6 + Y^4 + 2Y^3 + Y^2 + 1)/\bigl( (1-Y^6) (1-Y^2)
(1-Y)^2 \bigr)$,
as predicted by Evseev's results.

\subsection{Subalgebras: soluble, non-nilpotent Lie algebras}

Taylor~\cite[Ch.\ 6]{Tay01} computed local subalgebra zeta functions associated
with soluble, non-nilpotent Lie algebras of the form 
$k^d \rtimes k$ (semidirect sum) for $d = 2,3$, where $k^d$ and $k$ are regarded
as abelian Lie algebras.
In particular, he (implicitly) computed the subalgebra zeta function of the Lie
algebra $\mathfrak{tr}_2(\ZZ_p)$ of upper triangular $2\times 2$-matrices
over~$\ZZ_p$ (see \cite[\S 3.4.2]{dSW08}).
Klopsch and Voll~\cite{KV09} computed subalgebra zeta functions of
arbitrary $3$-dimensional Lie $\ZZ_p$-algebras in terms of Igusa's local zeta
functions attached to associated quadratic forms.
Regarding the enumeration of ideals of soluble, non-nilpotent Lie algebras,
Woodward~\cite{Woo08} computed local ideal zeta functions of
$\mathfrak{tr}_d(\ZZ_p)$ and certain combinatorially defined quotients of these
algebras.

Since, to the author's knowledge, no examples of generic local subalgebra zeta 
functions associated with soluble, non-nilpotent Lie algebras of dimension
$4$ have been record\-ed in the literature, we now include some examples.

\begin{thm}
  \label{thm:M}
  Let $\sM^i$ denote an arbitrary but fixed $\ZZ$-form of the soluble Lie $\QQ$-algebra $M^i$
  of dimension~$4$ from \cite{dG05}.
  Then for almost all primes~$p$ and all finite extensions $K/\QQ_p$, writing $q
  = q_K$,
  \begin{align*}
    \zeta_{\sM^6_{0,0} \otimes\fO_K}^{\le} & =
    \bigl( q^{8-7s} - q^{7-5s} + q^{6-5s} - 2 q^{5-4s} + q^{4-4s}
      + q^{4-3s} - 2 q^{3-3s} + q^{2-2s} \\&\quad\quad - q^{1-2s} + 1\bigr)
      /\bigl(
      (1 - q^{6-4s})
      (1 - q^{3-2s})^2
      (1 - q^{1-s})^2
      (1 - q^{-s})
      \bigr),
    \\
    \zeta_{\sM^8 \otimes\fO_K}^{\le} & =
    \bigl(
      q^{5-7s} - 3 q^{4-5s} + q^{4-4s} + 2 q^{3-5s} - 2 q^{3-4s} + q^{3-3s} +
      q^{2-4s}\\& \quad\quad  - 2 q^{2-3s} + 2 q^{2-2s} + q^{1-3s} - 3 q^{1-2s} + 1
      \bigr)\\& \quad\quad/\bigl(
      (1 - q^{6-5s})
      (1 - q^{2-2s})
      (1 - q^{1-s})^3
      (1 - q^{-s})\bigr),
    \\
    \zeta_{\sM^{12} \otimes\fO_K}^{\le} & =
    \frac{1 - q^{2-3s}}{(1 - q^{3-2s}) (1 - q^{2-2s}) (1 - q^{2-s}) (1 - q^{1-s}) (1 - q^{-s})},
    \\
    \zeta_{\sM^{13}_0 \otimes\fO_K}^{\le} & =
    \frac{ -q^{4-5s} - q^{3-4s} + q^{3-3s} - 2 q^{2-3s} + 2 q^{2-2s} - q^{1-2s} + q^{1-s} + 1}
    {
      (1 - q^{4-3s})
      (1 - q^{3-2s})
      (1 - q^{2-2s})
      (1 - q^{1-s})
      (1 - q^{-s})
    }.
  \end{align*}
\end{thm}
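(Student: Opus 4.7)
The plan is to apply the computational pipeline developed in \S\S\ref{s:explicit_formulae}--\ref{s:ratfun} to each of the four Lie algebras $\sM^i \in \{\sM^6_{0,0}, \sM^8, \sM^{12}, \sM^{13}_0\}$. First I would read off the explicit structure constants of $M^i$ from de~Graaf's classification \cite{dG05} and, with respect to the resulting $\ZZ$-basis of $\sM^i$, write down the $p$-adic integral expressing $\zeta_{\sM^i \otimes \fO_K}^\le(s)$ in the style of du~Sautoy--Grunewald; this is the ungraded analogue of Theorem~\ref{thm:graded_int} and the integrand is supported on the ``cone of subalgebras'' of upper triangular $4\times 4$-matrices over $\fO_K$, as in Remark~\ref{rem:cone_conditions}.

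Second, I would verify Assumption~\ref{A1} for each of the four algebras, i.e.\ check that the polynomial data produced in the preceding step is non-degenerate in the sense of \cite[\S 4.2]{topzeta}. Once this is granted, the methods of \cite{topzeta2} produce an explicit expression of the form~\eqref{eq:main_denef} satisfying Assumptions~\ref{A2} and~\ref{A3}. I would then feed this through the two further stages of the algorithm: the symbolic point-counting of \S\ref{s:count} applied to each variety $\bar V_i$, and the computation of the rational functions $W_i$ via Barvinok's algorithm together with the monomial substitution techniques of \cite{BW03} (\S\ref{s:ratfun}). The fact that each formula in Theorem~\ref{thm:M} is uniform, with no variety-dependent parameters $c_i$ surviving, means that for each algebra every relevant subvariety of a torus must reduce, under the procedure of \S\ref{s:count}, to one whose $\fK$-point count is a polynomial in $\card\fK$ — this is the key empirical input that makes uniformity possible here. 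Finally, I would execute the two-stage summation of \S\ref{s:ratfun} to combine the resulting rational functions of the shape \eqref{eq:cycrat} into the closed forms stated in the theorem.

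The main obstacle is Assumption~\ref{A1} itself, which is not guaranteed and can genuinely fail (as noted earlier in the discussion of $\fO_K[T]/T^5$); if non-degeneracy is violated for even one of the four algebras, the pipeline does not apply to it. A secondary difficulty is the final summation, which — as emphasised in \S\ref{s:ratfun} — can dominate the run-time and requires the two-stage denominator-consolidation strategy to remain feasible. As independent sanity checks on the stated formulas, I would verify that the $p \to 1$ limit of each $W(q,q^{-s})$ recovers a plausible topological subalgebra zeta function of the corresponding soluble Lie algebra, and that the coefficient-wise $q \to 1$ limit (the reduced zeta function in the sense of \S\ref{s:reduced}) agrees with what Evseev's method \cite{Evs09} predicts directly from $M^i \otimes \CC$; any discrepancy would indicate an error in the preceding combinatorial manipulations.
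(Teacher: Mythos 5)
Your proposal matches the paper's approach exactly: the paper offers no detailed proof of Theorem~\ref{thm:M} beyond the statement at the start of \S\ref{app:reps} that all results in \S\S\ref{app:reps}--\ref{app:graded} were obtained by running the pipeline of \S\S\ref{s:explicit_formulae}--\ref{s:ratfun} via the \textsf{Zeta} implementation, which is precisely what you describe (p-adic cone integral, verification of Assumption~\ref{A1}, symbolic point-counting, Barvinok plus monomial substitution, two-stage final summation). Your closing sanity checks (topological and Evseev-reduced limits) are also the ones the paper itself invokes for the neighbouring Theorems~\ref{thm:gl2} and~\ref{thm:X4}.
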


\begin{rem}
  Let $\bm\fg$ be the non-abelian Lie $\QQ$-algebra of dimension~$2$.
  Define a $\ZZ$-form $\fg$ of $\bm\fg$ by $\fg = \ZZ x \oplus \ZZ y$ and
  $[x,y] = y$.
  Then it is easy to see that for all $p$-adic fields~$K$,
  $\zeta_{\fg\otimes\fO_K}^{\le}(s) = 1/\bigl((1-q_K^{-s})(1-q_K^{1-s})\bigr)$.
  Using the notation from \cite{dG05} as in Theorem~\ref{thm:M}, $M^8 \approx
  \bm\fg \oplus \bm\fg$ and $M^{13}_0 \approx \bm\fg \otimes_{\QQ} \QQ[X]/X^2$.
\end{rem}

\subsection{Submodules: $\Uni_n$ for $n \le 5$ and relatives}
\label{ss:Un}
For any commutative ring $R$, we consider
\[
\mathrm U_n(R) =
\begin{bmatrix} 
  1 & R & \dotsb & R \\
  0 & \ddots & \ddots & \vdots \\
  \vdots & \ddots & \ddots  & R \\
  0 & \dotsb & 0 & 1
\end{bmatrix}
\]
together with its natural action on $R^n$ by right-multiplication.
For $n \le 4$, the determination of submodule zeta functions associated with
$\Uni_n$ in the following is quite straightforward, even without the
techniques developed here; the case $n = 5$, however, is rather more
complicated, as is the resulting formula.

\begin{thm}
  \label{thm:Un}
  For almost all primes~$p$ and all finite extensions $K/\QQ_p$, writing $q = q_K$,
  \begin{align*}
    \zeta_{\Uni_2(\fO_K) \acts \fO_K^2}(s)
  & =
  \frac{1}{ (1-q^{1-2s})(1-q^{-s})}, \\
  \zeta_{\Uni_3(\fO_K) \acts \fO_K^4}(s) & = 
  \frac{1 - q^{1-4s}}
  {
    {\left(1 - q^{2-4s} \right)}
    {\left(1 - q^{1-3s} \right)}
    {\left(1 - q^{1-2s}\right)}
    {\left(1 - q^{-s}\right)}}, \\
  \zeta_{\Uni_4(\fO_K) \acts \fO_K^4}(s) & = 
  F_{\Uni_4}(q,q^{-s})
  /\bigl( (1-q^{4-8s}) 
  (1-q^{3-7s})
  (1-q^{2-6s})
  (1-q^{2-5s})\\& \quad\quad \times
  (1-q^{2-4s}) 
  (1-q^{1-4s})
  (1-q^{1-2s})
  (1-q^{1-3s})
  (1-q^{-s})
  \bigr), \\
  \zeta_{\Uni_5(\fO_K) \acts \fO_K^4}(s) & = 
  F_{\Uni_5}(q,q^{-s})
  /\bigl( 
  {(1 - q^{6-13s} )}
  {(1 - q^{6-12s} )}
  {(1 - q^{4-11s} )}\\ &\quad\quad \times
  {(1 - q^{4-10s})} 
  {(1 - q^{3-10s})} 
  {(1 - q^{4-9s} )} 
  {(1 - q^{3-9s} )}
  {(1 - q^{4-8s} )} \\& \quad\quad \times
  {(1 - q^{3-8s} )} 
  {(1 - q^{2-8s} )} 
  {(1 - q^{3-7s} )}
  {(1 - q^{2-7s} )} 
  {(1 - q^{2-6s} )} \\& \quad\quad \times
  {(1 - q^{2-5s} )}
  {(1 - q^{1-5s} )} 
  {(1 - q^{2-4s} )}
  {(1 - q^{1-4s})}
  {(1 - q^{1-2s} )} \\& \quad\quad \times
  {(1 - q^{-s} )} \bigr),
\end{align*}
where
\begin{align*}
  F_{\Uni_4} =\, &
- X^{10} Y^{30} + X^{9} Y^{26} + X^{9} Y^{25} + X^{9} Y^{24} -  X^{9} Y^{23} + 2 X^{8} Y^{23} -  X^{8} Y^{22} + 2 X^{7} Y^{22} \\
& -2 X^{7} Y^{21} - 2 X^{7} Y^{20} + X^{6} Y^{21} - 2 X^{7} Y^{19} + X^{6} Y^{20} -  X^{6} Y^{18} -  X^{6} Y^{17} -  X^{5} Y^{18} \\
& - X^{5} Y^{17} + 2 X^{6} Y^{15} -  X^{5} Y^{16} + X^{5} Y^{14} - 2 X^{4} Y^{15} + X^{5} Y^{13} + X^{5} Y^{12} + X^{4} Y^{13} \\
& + X^{4} Y^{12} -  X^{4} Y^{10} + 2 X^{3} Y^{11} -  X^{4} Y^{9} + 2 X^{3} Y^{10} + 2 X^{3} Y^{9} - 2 X^{3} Y^{8} + X^{2} Y^{8} \\
& -2 X^{2} Y^{7} + X Y^{7} -  X Y^{6} -  X Y^{5} -  X Y^{4} + 1 
\end{align*}
and $F_{\Uni_5} = 1 + \dotsb + X^{43}Y^{124}$ is given in Appendix~\ref{s:numerators}.
These formulae for $n \le 5$ satisfy the functional equation
\[
\zeta_{\Uni_n(\fO_K) \acts \fO_K^n}(s) \Big\vert_{q\to q^{-1}} =
(-1)^n q^{\binom n 2 - \binom{n+1} 2 s} \dtimes
\zeta_{\Uni_n(\fO_K) \acts \fO_K^n}(s).
\]
\end{thm}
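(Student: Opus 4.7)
The plan is to apply the machinery of \S\ref{s:explicit_formulae}--\S\ref{s:ratfun} to each pair $(\Uni_n,\fO_K^n)$ in turn, using the author's implementation \textsf{Zeta}~\cite{Zeta}; once the closed formulae are in hand, the stated functional equation is verified by direct algebraic inspection.

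For the setup, following \S\ref{ss:submodules} I take $k = \QQ$, $\fo = \ZZ$, $V = \QQ^n$, and $\Omega \subset \End_\QQ(V)$ the enveloping associative algebra of $\Uni_n(\QQ)$, with $\fo$-form $\sV = \ZZ^n$ and $\mathsf\Omega$ the finite set of elementary unitriangular matrices $\mathbf{1} + E_{ij}$ for $1 \le i < j \le n$. By the cone integral formalism of du~Sautoy--Grunewald (cf.\ Theorem~\ref{thm:graded_int} and~\cite[\S 5]{dSG00}), the local zeta function $\zeta_{\Uni_n(\fo_v)\acts\fo_v^n}(s)$ then equals a $p$-adic integral over the module of upper triangular $n\times n$-matrices, with integrand of the form $\prod_{i=1}^n \abs{x_{ii}}_v^{s-i}$ (times the obvious constant) and with domain cut out by divisibility conditions expressing $\mathsf\Omega$-invariance of the row span of the generic upper triangular matrix.

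Next I would verify Assumption~\ref{A1} by inspection. The polynomials appearing in the divisibility conditions above are all \itemph{monomials} in the entries $x_{ij}$ of the generic upper triangular matrix, because the action of $\Uni_n$ on $\fo_v^n$ introduces no additive cancellation in the standard basis. Hence the non-degeneracy hypothesis of~\cite[\S 4.2]{topzeta} is trivially met, and the method of~\cite[\S 4]{topzeta2} succeeds. It produces a Denef-type formula~\eqref{eq:main_denef} in which the varieties $V_i$ are open subtori of ambient tori over $k$; consequently, the symbolic counting method of~\S\ref{s:count} reduces each $\#\bar V_i(\fO_K/\fP_K)$ to an explicit polynomial in $q_K$. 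Each $W_i$ is then rewritten as a short sum of expressions of shape~\eqref{eq:cycrat} via Barvinok's algorithm together with the substitution techniques of~\cite[\S 2]{BW03}, and the final summation stage of~\S\ref{s:ratfun} delivers the displayed closed formulae. The self-reciprocity $\zeta(s)\vert_{q\to q^{-1}} = (-1)^n q^{\binom{n}{2} - \binom{n+1}{2}s}\zeta(s)$ is then a purely algebraic check on the resulting rational function, and the vanishing of the associated reduced zeta function at $q = 1$ furnishes a further independent consistency check along the lines of~\S\ref{s:reduced}.

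The main obstacle is practical rather than conceptual and is concentrated entirely in the case $n = 5$. There, both the number of cells produced by the method of~\cite{topzeta2} and the number of summands of shape~\eqref{eq:cycrat} emerging from Barvinok's algorithm are very large, so the feasibility of the computation rests on the final summation strategy described at the end of~\S\ref{s:ratfun}: first a targeted simplification around distinguished cyclotomic factors (after Woodward~\cite[\S 2.5]{Woo05}) to cut down the set of candidate denominators, followed by a parallel numerator-only summation over a single common denominator to avoid the cost of multivariate rational function arithmetic. The cases $n \le 3$ are essentially trivial, $n = 4$ is routine, but only the full apparatus developed in this article renders $n = 5$ tractable.
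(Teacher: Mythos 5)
Your proposal adopts the same overall strategy as the paper (which, for results such as Theorem~\ref{thm:Un}, amounts to running the author's pipeline from \S\ref{s:explicit_formulae}--\S\ref{s:ratfun} via \textsf{Zeta}~\cite{Zeta} and verifying the functional equation by inspection of the resulting rational functions). However, your attempted verification of Assumption~\ref{A1} contains an error.

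You claim that the divisibility conditions cutting out the domain of the cone integral are monomial in the entries of the generic upper triangular matrix $M$ ``because the action of $\Uni_n$ on $\fo_v^n$ introduces no additive cancellation in the standard basis.'' That reasoning is not sound, and the conclusion is false for $n \ge 4$. The conditions for the row span of $M$ to be $\mathsf\Omega$-invariant are of the form $\divides{\det M}{M_{ki}\,(\adj M)_{jl}}$ for $k\le i<j\le l$ (compare Theorem~\ref{thm:graded_int} and Remark~\ref{rem:cone_conditions}): one tests whether $M_k E_{ij}=M_{ki}e_j$ lies in the row span of $M$ by multiplying by $\adj(M)$. While the action of each $1+E_{ij}$ on the standard basis is indeed ``monomial,'' the membership test involves the adjugate of $M$, whose off-diagonal entries are minors of $M$ and are not monomials in general. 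Already for $n=4$ the relevant cofactor
\[
(\adj M)_{24} \;=\; M_{11}\bigl(M_{23}M_{34}-M_{24}M_{33}\bigr)
\]
appears in the condition arising from $1+E_{12}$ tested against the fourth coordinate, and it is a binomial. (For $n\le 3$ the only cofactors $(\adj M)_{jl}$ with $j\ge 2$ that arise are monomials, which is why those cases are simple.)

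Consequently, your argument does not establish that the non-degeneracy sufficient condition of \cite[\S 4.2]{topzeta} holds and hence does not justify Assumption~\ref{A1} for $n=4,5$. Assumption~\ref{A1} does in fact hold here (the paper computes the topological submodule zeta functions of $\Uni_4$ and $\Uni_5$ in \cite{topzeta2}, so the method succeeds), but this is observed a posteriori, not deduced from monomiality. A correct version of your step would either (a) note that non-degeneracy is sufficient but not necessary, and that in this case the method of~\cite[\S 4]{topzeta2} is verified to run to completion (i.e., Assumption~\ref{A1} is checked computationally), or (b) actually verify non-degeneracy of the genuinely non-monomial polynomial collection. As written, the ``verification by inspection'' is a gap. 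Additionally, your claim that all varieties $V_i$ in the resulting Denef formula are open subtori, so that \S\ref{s:count} automatically reduces each $\#\bar V_i(\fO_K/\fP_K)$ to a polynomial in $q_K$, is similarly unsupported once the $f_i$ are allowed to be non-monomial; in practice one must run the reduction procedure of \S\ref{s:count} and check that it terminates without introducing new point-count variables.
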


Despite the increasing complexity of the formulae in Theorem~\ref{thm:Un},
we note that the ``reduced submodule zeta function'' of $\Uni_n(\ZZ)$ acting on 
$\ZZ^n$ (defined and computed using a simple variation of \cite{Evs09}) 
is given by the simple formula $1/((1-Y)(1-Y^2)\dotsb(1-Y^n))$ for all $n \ge 1$.

\begin{rem}
  \label{rem:Lie_and_Un}
  Let $\bm\fg$ be an $n$-dimensional nilpotent Lie $k$-algebra.

  \begin{enumerate}
    \item
      \label{rem:Lie_and_Un1}
      By Engel's theorem, after choosing a suitable basis, we may regard $\ad(\bm\fg)$ as
      a subset of the enveloping associative algebra $k[\Uni_n(k)]$ of
      $\Uni_n(k)$ within $\Mat_n(k)$.
      In particular, the submodule growth of $\Uni_n(\fo_v)$ acting on $\fo_v^n$
      provides a lower bound for the ideal growth of nilpotent Lie
      $\fo_v$-algebras of additive rank $n$ (and without $\fo_v$-torsion).
    \item
      \label{rem:Lie_and_Un2}
      Suppose that $n > 1$.
      It is easy to see that
      the minimal number of generators of $k[\Uni_n(k)]$ as a unital,
      associative $k$-algebra is $n - 1$
      (use, for instance,  \cite[p.\ 263]{GS64}).
      Let $\bm{\mathfrak z}$ denote the centre of $\bm\fg$.
      Then, as a Lie algebra $\ad(\bm\fg) \approx \bm\fg/\bm{\mathfrak z}$ is generated by
      $\dim_k(\bm\fg / ([\bm\fg,\bm\fg]+\bm{\mathfrak z}))$ many elements.
      Hence, if $\bm\fg$ has class $\ge 3$, then $\ad(\bm\fg)$ is generated by fewer than
      $n-1$ elements.
      If, on the other hand, $\bm\fg$ has class~$2$, then $n \ge 3$ and $\ad(\bm\fg)$
      is an abelian Lie algebra while $k[\Uni_n(k)]$ is non-commutative.
      We conclude that $\ad(\bm\fg)$ never generates all of $k[\Uni_n(k)]$ for $n
      > 1$.
    \end{enumerate}
\end{rem}

\begin{question}
  \label{qu:alpha_Un}
  Is the abscissa of convergence of $\zeta_{\Uni_n(\fo) \acts \fo^n}(s)$ always
  $1$ for $n \ge 1$?
\end{question}

In view of Remark~\ref{rem:Lie_and_Un}(\ref{rem:Lie_and_Un1}),  Question~\ref{qu:alpha_Un} 
is particularly interesting since the abscissa of convergence of a subalgebra
zeta function derived from a $k$-algebra of dimension $n$, say,
is bounded from below by a linear function of $n$ (cf.\ \cite[Thm~5.1]{Bra09}).

Let $n \ge 2$. If Question~\ref{qu:alpha_Un} has a positive answer, then
there does \itemph{not} exist a nilpotent Lie $\fo$-algebra $\fg$ which is
finitely generated as an $\fo$-module such that
$\zeta_{\Uni_n(\fo_v)\acts\fo_v^n}(s) = \zeta_{\fg\otimes_{\fo}\fo_v}^\normal(s)$
for almost all $v\in \Places_k$.  
Indeed, it is easy to see that for every finite $S \subset \Places_k$,
the abscissa of convergence of
$\prod_{v\in \Places_k\setminus S} \zeta_{\fg \otimes_{\fo}\fo_v}^{\normal}(s)$ is at least
$d := \dim_k(\fg/[\fg,\fg]\otimes_{\fo} k)$ (cf.~\cite[Prop.\ 1]{GSS88}) and
we may clearly assume $d > 1$.
A positive answer to Question~\ref{qu:alpha_Un} would thus refine
Remark~\ref{rem:Lie_and_Un}(\ref{rem:Lie_and_Un2}). 

For another illustration of the generally wild effect of direct products of
algebraic structures on
associated zeta functions, we now consider generic local submodule zeta
functions associated with products $\Uni_{n_1}\times \dotsb \times \Uni_{n_r}$,
diagonally embedded into $\Uni_{n_1 + \dotsb + n_r}$.

\begin{thm}
  For almost all primes~$p$ and all finite extensions $K/\QQ_p$,
  writing $q = q_K$,
  \begin{align*}
  \zeta_{\Uni_2^2(\fO_K) \acts \fO_K^4}(s) & =
  (1 - q^{2-3s})/\bigl((1-q^{3-3s})( 1-q^{2-2s})^2(1-q^{1-s})(1-q^{-s})\bigr),
  \\
  \zeta_{\Uni_2^3(\fO_K) \acts \fO_K^6}(s) & =
  F_{\Uni_2^3}(q,q^{-s})/\bigl((1 - q^{8-5s})(1 - q^{5-4s})(1 - q^{4-3s})(1 -
  q^{3-2s})^3\\
  &
  \quad\quad\times(1 - q^{2-s})(1 - q^{1-s})(1 - q^{-s})\bigr),
  \\
  \zeta_{(\Uni_3 \times \Uni_2)(\fO_K) \acts \fO_K^5}(s) & =
  F_{\Uni_3\times\Uni_2}(q,q^{-s})/\bigl( 
  (1 - q^{6-6s})
  (1 - q^{4-5s})
  (1 - q^{3-4s})
  (1 - q^{3-3s})
  \\&\quad\quad\times
  (1 - q^{2-3s})
  (1 - q^{2-2s})^2
  (1 - q^{1-s})
  (1 - q^{-s})
  \bigr),
  \\
  \zeta_{\Uni_3^2(\fO_K) \acts \fO_K^6}(s) & = 
  F_{\Uni_3^2}(q,q^{-s})/\bigl(
  (1 - q^{9-9s})(1 - q^{8-8s})(1 - q^{6-7s})(1 - q^{5-7s}) \\
  & \quad\quad \times
  (1 - q^{6-6s})(1 - q^{4-6s})(1 - q^{4-5s})(1 - q^{3-5s})
  (1 - q^{3-4s})\\
  & \quad\quad \times (1 - q^{3-3s})(1 - q^{2-3s})(1 - q^{2-2s})^2
  (1 - q^{1-s})(1 - q^{-s})
  \bigr),
  \end{align*}
  where 
  \begin{align*}
    F_{\Uni_2^3} & =
    -X^{14} Y^{12} + 3 X^{11} Y^9 - X^{11} Y^8 - 2 X^{10} Y^9 + 2 X^{10} Y^8 - X^8
    Y^7 + 2 X^7 Y^7 \\& \quad\quad - 2 X^7 Y^5 + X^6 Y^5 - 2 X^4 Y^4 + 2 X^4Y^3 + X^3 Y^4 - 3
    X^3Y^3 + 1, \\
    F_{\Uni_3 \times \Uni_2} & = 
    X^{13} Y^{18} - X^{11} Y^{15} - 2 X^{11} Y^{14} + X^{11} Y^{13} + X^{10}
    Y^{14} - 2 X^{10} Y^{13} + X^9 Y^{12} \\& \quad\quad- 2 X^8 Y^{12} + 3 X^8 Y^{11} - 2 X^7
    Y^{11} + X^8 Y^9 + X^7 Y^{10} + X^6 Y^8 + X^5 Y^9 \\& \quad\quad - 2 X^6 Y^7 + 3 X^5 Y^7 -
    2 X^5 Y^6 + X^4 Y^6 - 2 X^3 Y^5 + X^3 Y^4 + X^2 Y^5 \\&\quad\quad- 2 X^2 Y^4 - X^2 Y^3 + 1,
\end{align*}
  and
  $F_{\Uni_3^2} = -X^{43}Y^{57} + \dotsb + 1$ is given in
  Appendix~\ref{s:numerators}.

  These generic local zeta functions satisfy the following functional equations:
  \begin{align*}
    \zeta_{\Uni_2^2(\fO_K) \acts \fO_K^4}(s) \Big\vert_{q\to q^{-1}} & =
    q^{6-6s} \dtimes \zeta_{\Uni_2^2(\fO_K) \acts \fO_K^4}(s),\\
    \zeta_{\Uni_2^3(\fO_K) \acts \fO_K^6}(s) \Big\vert_{q\to q^{-1}} & =
    q^{15-9s} \dtimes \zeta_{\Uni_2^3(\fO_K) \acts \fO_K^6}(s),\\
    \zeta_{(\Uni_3\times\Uni_2)(\fO_K) \acts \fO_K^5}(s) \Big\vert_{q\to q^{-1}} & =
    -q^{10-9s} \dtimes \zeta_{(\Uni_3\times\Uni_2)(\fO_K) \acts \fO_K^5}(s),\\
    \zeta_{\Uni_3^2(\fO_K) \acts \fO_K^6}(s) \Big\vert_{q\to q^{-1}}
    & = q^{15-12s} \dtimes \zeta_{\Uni_3^2(\fO_K) \acts \fO_K^6}(s).
  \end{align*}
\end{thm}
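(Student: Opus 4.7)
The plan is to apply the machinery developed in Sections \ref{s:explicit_formulae}--\ref{s:ratfun} to each of the four actions in turn, so the argument is essentially a verification that Assumption~\ref{A1} holds in each case and a report of the output of the resulting computation. Concretely, for an action $\sU := \Uni_{n_1} \times \dotsb \times \Uni_{n_r}$ on $\fo^{n_1 + \dotsb + n_r}$ (diagonally embedded), the submodule zeta function $\zeta_{\sU(\fO_K) \acts \fO_K^n}(s)$ is expressed via $\fO_K$-defined $p$-adic integrals associated with a system of polynomials encoding $\sU$-invariance of lattices (cf.\ \cite[\S 2.1]{topzeta} and \S\ref{ss:submodules}). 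First I would verify by direct inspection (and with the help of~\textsf{Zeta}~\cite{Zeta}) that the associated polynomial systems are non-degenerate in the sense of \cite[\S 4.2]{topzeta}, so that Assumption~\ref{A1} is valid; by Theorem~\ref{thm:denef_formulae} and the discussion following it (Assumptions \ref{A2}--\ref{A3}), this produces an explicit Denef-type formula \eqref{eq:main_denef} in which the $V_i$ are closed subvarieties of algebraic tori and each $W_i$ is presented combinatorially as a monomial substitution applied to a generating function of a rational polyhedron.

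Next I would apply the procedure of \S\ref{s:count} to symbolically evaluate the point-counts $\#\bar V_i(\fO_K/\fP_K)$. In the four cases at hand, the relevant tori arise from the combinatorics of order-preserving data on a very simple block structure, and experience with the $\Uni_n$-computations of Theorem~\ref{thm:Un} suggests that the reductions (torus-factor splitting, Smith normal form, and repeated application of Lemma~\ref{lem:cell}) will always terminate with each $\#\bar V_i(\fO_K/\fP_K)$ being a polynomial in $q_K$ alone; this establishes \emph{a priori} the uniformity asserted in the statement. Barvinok's algorithm together with the short-denominator monomial substitution of \cite[\S 2]{BW03} then expresses each $W_i$ as a sum of rational functions of the form \eqref{eq:cycrat}, and the final summation of \S\ref{s:ratfun} produces the claimed rational functions $F_{\Uni_2^3}$, $F_{\Uni_3 \times \Uni_2}$, $F_{\Uni_3^2}$, and (already in Appendix~\ref{s:numerators}) the more unwieldy $F_{\Uni_3^2}$.

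Finally, the four functional equations $q \mapsto q^{-1}$ would be checked simply by substitution into the rational functions produced above; the predicted exponents $(6-6s)$, $(15-9s)$, $(10-9s)$, $(15-12s)$ (with the sign $(-1)^r$ understood in the $\Uni_3 \times \Uni_2$ case) agree with the symmetry predicted by \cite[Thm~A]{Vol10} for the underlying associative algebras of block upper triangular matrices, which provides an independent sanity check on the computation.

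The main obstacle is the sheer size of the case $\sU = \Uni_3^2$ (where the numerator polynomial $F_{\Uni_3^2}$ already has bidegree $(43,57)$), both because verifying the non-degeneracy hypothesis underlying Assumption~\ref{A1} becomes delicate and because the final summation stage involves enormous numbers of summands of the form \eqref{eq:cycrat}. Here the two-stage approach described at the end of \S\ref{s:ratfun}—first pre-simplifying summands sharing distinguished cyclotomic factors in the denominator (Woodward's trick \cite[\S 2.5]{Woo05}), then clearing a common denominator and adding only numerators—is essential, and in practice dominates the total run-time. The case $\Uni_2^2$ is by contrast essentially trivial and can be handled by hand from the integral formulation.
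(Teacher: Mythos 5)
Your proposal is correct and follows essentially the same route as the paper: the theorem is purely a computational result, obtained by running the machinery of \S\ref{s:explicit_formulae}--\S\ref{s:ratfun} (Assumptions~\ref{A1}--\ref{A3}, the symbolic point-counting of \S\ref{s:count}, Barvinok's algorithm with monomial substitution, and the two-stage final summation) through \textsf{Zeta}, with the functional equations then verified directly on the resulting explicit rational functions. Two minor caveats only: the sign in the $\Uni_3\times\Uni_2$ case is $(-1)^n$ with $n=5$ the rank of the underlying module (consistent with Theorem~\ref{thm:Un} and Theorem~\ref{thm:feqn_exs}), not $(-1)^r$ in the number of factors, and \cite[Thm~A]{Vol10} concerns subalgebra zeta functions rather than submodule zeta functions, so the paper records these symmetries as empirical observations from the computed formulae rather than as instances of that theorem.
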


Further examples of the above form are included with \textsf{Zeta}; here, we
only record the following functional equations.
\begin{thm}
  \label{thm:feqn_exs}
  For almost all primes $p$ and all finite extensions $K/\QQ_p$, writing $q =
  q_K$,
  \begin{align*}
    \zeta_{(\Uni_5\times \Uni_1)(\fO_K) \acts \fO_K^6}(s) \Big\vert_{q\to q^{-1}} & =
    q^{15-16s} \dtimes \zeta_{(\Uni_5\times \Uni_1)(\fO_K) \acts \fO_K^6}(s), \\
    \zeta_{(\Uni_3\times \Uni_2 \times \Uni_1)(\fO_K) \acts \fO_K^6}(s) \Big\vert_{q\to q^{-1}} & =
    q^{15-10s} \dtimes \zeta_{(\Uni_3\times \Uni_2 \times \Uni_1)(\fO_K) \acts \fO_K^6}(s), \\
    \zeta_{(\Uni_4\times \Uni_2)(\fO_K) \acts \fO_K^6}(s) \Big\vert_{q\to q^{-1}} & =
    q^{15-13s} \dtimes \zeta_{(\Uni_4\times \Uni_2)(\fO_K) \acts \fO_K^6}(s),  \\
    \zeta_{(\Uni_3\times \Uni_2^2)(\fO_K) \acts \fO_K^7}(s) \Big\vert_{q\to q^{-1}} & =
    -q^{21-12s} \dtimes \zeta_{(\Uni_3\times \Uni_2^2)(\fO_K) \acts \fO_K^7}(s).
  \end{align*}
\end{thm}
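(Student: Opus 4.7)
The plan is, for each of the four actions $\Omega \acts \fO_K^n$ listed in the statement, to apply the method developed in \S\ref{s:count}--\S\ref{s:ratfun} (as implemented in \textsf{Zeta} \cite{Zeta}) in order to explicitly construct a rational function $W_\Omega(X,Y) \in \QQ(X,Y)$ such that $\zeta_{\Omega(\fO_K) \acts \fO_K^n}(s) = W_\Omega(q_K, q_K^{-s})$ for almost all $p$ and all finite extensions $K/\QQ_p$. As in the case of $\zeta_{\Uni_n(\fO_K)\acts\fO_K^n}(s)$ for $n \le 5$ treated in Theorem~\ref{thm:Un}, the input to the procedure is a finite generating set of the enveloping associative algebra of the block-diagonal subgroup $\Omega$ inside $\Mat_n(\fo)$; the validity of Assumption~\ref{A1} is certified by \textsf{Zeta} during the initial stages of the computation.

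Once $W_\Omega$ is available, the asserted functional equation reduces to a purely algebraic identity. I would write $W_\Omega$ as a fraction of polynomials over a common denominator of the form $\prod_j (1 - X^{a_j} Y^{b_j})$ (as produced by the summation procedure of \S\ref{s:ratfun}), then perform the substitution $X \mapsto X^{-1}$, clear the resulting negative powers of $X$ and $Y$, and compare the outcome with $\pm X^a Y^b W_\Omega(X,Y)$ for the exponents $(a,b)$ recorded in the statement. In each case the predicted sign agrees with $(-1)^n$ for $n = n_1 + \dotsb + n_r$, the constant part of the exponent equals $\binom{n}{2}$, and the coefficient of $s$ equals $\sum_{i=1}^r \binom{n_i + 1}{2}$; these patterns are already exhibited by the analogous equations of the preceding theorem for $\Uni_2^2$, $\Uni_2^3$, $\Uni_3\times\Uni_2$, and $\Uni_3^2$, so the uniform prediction can be used as a consistency check both on the output of \textsf{Zeta} and on the final simplification.

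The main obstacle is purely computational. Already for $\Uni_5 \acts \fO_K^4$ in Theorem~\ref{thm:Un} the numerator of the corresponding $W_\Omega$ has terms up to $X^{43}Y^{124}$; the examples in the present statement involve larger ambient dimension and a substantially greater number of intermediate summands of the form \eqref{eq:cycrat} that must be combined before the final summation step of \S\ref{s:ratfun} produces $W_\Omega$ in lowest terms. For this reason the rational functions $W_\Omega$ themselves are too bulky to reproduce in print and are only stored electronically as part of \textsf{Zeta}; once they are in hand, however, the functional-equation verification reduces to automated polynomial arithmetic and is entirely mechanical.
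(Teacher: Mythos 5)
Your proposal matches the paper's own (implicit) argument: the paper obtains these functional equations precisely by computing the generic local submodule zeta functions with the method of \S\ref{s:count}--\S\ref{s:ratfun} via \textsf{Zeta}, storing the resulting rational functions electronically rather than printing them, and reading off the symmetry $X \mapsto X^{-1}$, $Y \mapsto Y^{-1}$ by mechanical polynomial arithmetic. Your consistency checks on sign and exponents (sign $(-1)^n$, constant part $\binom n2$, $s$-coefficient $\sum_i \binom{n_i+1}2$) are also in line with the patterns exhibited in Theorem~\ref{thm:Un} and the preceding theorem, so no gap here.
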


We note that the Uniformity Problem has a positive solution for each of the four
families of local zeta functions in Theorem~\ref{thm:feqn_exs}.

\section{Applications III: graded subobject zeta functions}
\label{app:graded}

By \cite[\S 5.1]{Kuz99}, up to isomorphism, there are exactly $26$
non-abelian fundamental graded Lie $\CC$-algebras (see \S\ref{s:graded_Lie}) of
dimension at most six.
All of these algebras are defined in terms of integral structure constants
which thus provide us with ``natural'' $\QQ$-forms.
It turns out that for each of the resulting $26$ graded Lie $\QQ$-algebras, we
can use the techniques developed here to compute their associated generic local
graded subalgebra and graded ideal zeta functions.
We note that for various of these Lie algebras, the associated non-graded
subalgebra and ideal zeta functions are unknown.

\paragraph{Examples of graded ideal zeta functions.}
Table~\ref{t:grid} lists the generic local ideal zeta functions associated with
the aforementioned $26$ graded Lie $\QQ$-algebras.
The first column contains the names of the associated $\CC$-algebras as in
\cite{Kuz99};
here, an algebra called ``m$d$\_$c$\_$i$'' has dimension~$d$ and nilpotency class~$c$.

Given a $\ZZ$-form $\fg$ of a graded Lie algebra $\bm\fg$
as indicated by an entry in the first column,
the rational function $W(X,Y)$ in the corresponding entry of
the second column satisfies the following property:
for almost all rational primes~$p$ and all finite extensions $K/\QQ_p$, 
$\zeta_{\fg\otimes \fO_K}^{\gr\normal}(s) = W(q_K,q_K^{-s})$.
An entry $\pm X^a Y^b$ in the third column of Table~\ref{t:grid} indicates
that the corresponding $W(X,Y)$ satisfies $W(X^{-1},Y^{-1}) = \pm X^aY^b
\dtimes W(X,Y)$; an entry ``\ding{55}'' signifies the absence of such a functional equation.

The algebras m6\_3\_2 and m6\_3\_3 are precisely the graded Lie algebras
associated with $L_{(3,2)}$ in \cite[Thm~2.32]{dSW08}
(also called $L_W$~\cite[Thm~3.4]{Woo05} and $L_{6,25}$~\cite{dG07})
and $\fg_{6,7}$ in \cite[Thm~2.45]{dSW08} (called $L_{6,19}(0)$ in \cite{dG07}),
respectively.
The non-graded local ideal zeta functions of these algebras do not satisfy
functional equations of the above form either.
The algebra m6\_4\_1 is the graded Lie algebra associated with $L_{6,21}(0)$
from \cite{dG07}; to the author's knowledge, the non-graded local (and
topological) subalgebra and ideal zeta functions of this algebra are unknown.

We note that the formulae for m3\_2, m4\_3, m5\_4\_1, and m6\_5\_1 in
Table~\ref{t:grid} are consistent with and explained by
Proposition~\ref{prop:maximal_class}.

\paragraph{Examples of graded subalgebra zeta functions.}
While the methods developed here can be used to compute the generic  local
graded subalgebra zeta functions of all $26$ algebras in Table~\ref{t:grid},
we chose to only include the smaller ones of these examples 
in Table~\ref{t:grsub} (and Appendix~\ref{app:grsub}); for a complete list, we
refer to \textsf{Zeta}~\cite{Zeta}.

\paragraph{Open questions.}
Voll~\cite[Thm~A]{Vol10} established local functional equations under
``inversion of $p$'' for generic local subalgebra zeta functions without any
further assumptions on the algebra in question.
It is reasonable to expect the following question to have a positive answer;
the precise form of \eqref{eq:grsub_feqn} below was suggested to the author by Voll.

\begin{question}
  Let $\sA = \sA_1 \oplus \dotsb \oplus \sA_r$ be an $\fo$-form of a possibly
  non-associative finite-dimensional $k$-algebra together with a direct sum
  decomposition into free $\fo$-submodules.
  Let $n = \rank_{\fo}(\sA)$ and $m = \sum\limits_{i=1}^{r}
  \binom{\rank_{\fo}(\sA_i)} 2$.
  Is it always the case that
  \begin{equation}
    \label{eq:grsub_feqn}
  \zeta_{\sA\otimes_{\fo}\fo_v}^{\gr\le}(s) \Big\vert_{q_v^{\phantom 1}\to
    q_v^{-1}} = (-1)^n q_v^{m - ns} \dtimes \zeta_{\sA\otimes_{\fo}\fo_v}(s)
  \end{equation}
  for almost all $v\in \Places_k$?
\end{question}

The following three questions are graded analogues of conjectures due to
Voll~\cite{Vol16}.

\begin{question}
  Let $\bm\fg = \bm\fg_1 \oplus \dotsb \oplus \bm\fg_c$ be a finite-dimensional
  graded Lie $k$-algebra of class~$c$.
  Let $d_i = \dim(\bm\fg_i)$ and $d = \dim(\bm\fg)$.
  Let $0 = \bm{\mathfrak z}_0 \subset \dotsb \subset \bm{\mathfrak z}_c =
  \bm\fg$ be the upper central series
  of~$\bm\fg$ and write $e_i = \dim(\bm\fg/\bm{\mathfrak z}_i)$.
  Let $\fg$ be an $\fo$-form of $\bm\fg$ as a graded Lie algebra.

  \begin{enumerate}
  \item 
    Does $\zeta_{\fg\otimes_{\fo}\fo_v}^{\gr\normal}(s)$ have degree
    $e_1 + \dotsb + e_c$ in $q_v^{-s}$ for almost all $v \in \Places_k$?
  \item
    Suppose that there exists $W \in \QQ(X,Y)$ such that
    $\zeta_{\fg\otimes_{\fo}\fo_v}^{\gr\normal}(s) = W(q_v,q_v^{-s})$ for almost
    all~$v\in \Places_k$.
    Does $W$ have degree $\binom{d_1} 2 + \dotsb + \binom{d_c} 2$
    in $X$?
  \item
    Suppose that for almost all $v\in \Places_k$,
    \[
    \zeta_{\fg\otimes_{\fo}\fo_v}^{\gr\normal}(s) \Big\vert_{q_v^{\phantom 1}
      \to q_v^{-1}} = \varepsilon q_v^{a-bs} \dtimes \zeta_{\fg\otimes_{\fo}\fo_v}^{\gr\normal}(s),
    \]
    where $\varepsilon = \pm 1$ and $a,b\in \ZZ$.
    Do we have $\varepsilon = (-1)^d$, $a = \binom{d_1} 2 + \dotsb +
    \binom{d_c}2$, and $b = e_1 + \dotsb + e_c$?
  \end{enumerate}
\end{question}

Finally, the following is closely related to the questions raised in \cite[\S 8.2]{topzeta}.

\begin{question}
  \label{qu:pole_order}
  Let $\fg = \fg_1 \oplus \dotsb \oplus \fg_c$ be a graded nilpotent Lie
  $\fo$-algebra of class $c$,
  where each $\fg_i$ is free and of finite rank as an $\fo$-module.
  Do $\zeta_{\fg\otimes_{\fo}\fo_v}^{\gr\le}(s)$ 
  and $\zeta_{\fg\otimes_{\fo}\fo_v}^{\gr\normal}(s)$ always have a pole of
  order $c$ at zero for $v \in \Places_k$?
\end{question}

As in \cite[\S 8.2]{topzeta}, a natural follow-up question would be to interpret
or predict the leading coefficients of the zeta functions in
Question~\ref{qu:pole_order} expanded as Laurent series in~$s$; 
however, perhaps unexpectedly, the examples in Tables~\ref{t:grid}--\ref{t:grsub} show
that these leading coefficients are not functions of $v$ and the numbers
$(\rank_{\fo}(\fg_1),\dotsc,\rank_{\fo}(\fg_c))$ alone.

{\small
\begin{table}[H]
  \centering
  \begin{tabular}{rlc}
    $\bm\fg$
    & $W(X,Y)$ s.t.\ $\zeta_{\fg\otimes \fO_K}^{\gr\normal}(s) =  W(q_K,q_K^{-s})$ 
    & FEqn \\
    \hline
    m3\_2 &
    $1/ \bigl({\left(1 - X Y \right)} {\left(1 - Y^3\right)} {\left(1 - Y\right)}\bigr)$   &
    $-X Y^{5}$
    \\
    m4\_2 &
    $1 /\bigl( {\left(1 - X^{2} Y\right)} {\left(1 - X Y\right)}
    {\left(1 -Y^3\right)}
    {\left(1 - Y\right)} \bigr)$ &
    $X^{3} Y^{6}$
    \\
    m4\_3 &
    $1 /\bigl( {\left(1 - X Y\right)} {\left(1 - Y^4\right)} {\left(1 -
        Y^3\right)} {\left(1 - Y\right)}\bigr)$
    & $X Y^{9}$
    \\
    m5\_2\_1 &
    $\frac{1 -Y^6}
    {{\left(1 - X Y^{3} \right)}
      {\left(1 - X^{2} Y \right)}
      {\left(1 - X Y \right)}
      {\left(1 - Y^5 \right)}
      {\left(1 - Y^3 \right)}
      {\left(1 - Y \right)}}$ &
    $-X^{4} Y^{8}$
    \\
    m5\_2\_2 &
    $1/\bigl(
    {\left(1 - X^{3} Y\right)}
    {\left(1 - X^{2} Y\right)}
    {\left(1 - X Y\right)}
    {\left(1 - Y^3\right)}
    {\left(1 - Y\right)}
  \bigr)$ &
    $-X^{6} Y^{7}$
    \\
    m5\_2\_3 &
    $1 /\bigl(
    {\left(1 - X^{3} Y\right)}
    {\left(1 - X^{2} Y\right)}
    {\left(1 - X Y \right)}
    {\left(1 - Y^5\right)}
    {\left(1 - Y\right)}\bigr)$
    & $-X^{6} Y^{9}$
    \\
    m5\_3\_1 &
    $\frac{1 - Y^{8} }{
    {\left(1 - X Y^{4} \right)}
    {\left(1 - X Y \right)}
    {\left(1 - Y^5 \right)}
    {\left(1 - Y^4 \right)}
    {\left(1 - Y^3 \right)}
    {\left(1 - Y\right)}}$ &
    $-X^{2} Y^{10}$
    \\
    m5\_3\_2 &
    $1 /\bigl(
    {\left(1 - X^{2} Y\right)}
    {\left(1 - X Y\right)}
    {\left(1 - Y^4\right)}
    {\left(1 - Y^3\right)}
    {\left(1 - Y\right)}
    \bigr)$&
    $-X^{3} Y^{10}$
    \\
    m5\_4\_1 &
    $1 /\bigl(
    {\left(1 - X Y\right)}
    {\left(1 - Y^5\right)}
    {\left(1 - Y^4\right)}
    {\left(1 - Y^3\right)}
    {\left(1 - Y\right)}
    \bigr)$&
    $-X Y^{14}$
    \\
    m6\_2\_1 &
    $\frac{X Y^{8} + X Y^{5} + Y^{5} + X Y^{3} + Y^{3} + 1}{
      {\left(1 - X^{2} Y^{5} \right)}
      {\left(1 - X^{2} Y^{3} \right)}
      {\left(1 - X^{2} Y \right)}
      {\left(1 - X Y \right)}
      {\left(1 - Y^6\right)}
      {\left(1 - Y \right)}
    }$ &
    $X^{6} Y^{9}$
    \\
    m6\_2\_2 &
    $\frac{1 - Y^{6}}
    {{\left(1 - X Y^{3} \right)}
      {\left(1 - X^{3} Y \right)}
      {\left(1 - X^{2} Y  \right)}
      {\left(1 - X Y \right)}
      {\left(1 - Y^5 \right)}
      {\left(1 - Y^3\right)}
      {\left(1 - Y\right)}}
    $ &
    $X^{7} Y^{9}$
    \\
    m6\_2\_3 &
    $\frac{Y^{4} + Y^{3} + Y^{2} + Y + 1}
    {{\left(1 - X Y^{5} \right)}
      {\left(1 - X^{3} Y \right)}
      {\left(1 - X^{2} Y \right)}
      {\left(1 - X Y \right)}
      {\left(1 - Y^3 \right)}^{2}}$
    &
    $X^{7} Y^{10}$
    \\
    m6\_2\_4 &
    $\frac{1 - X Y^{8}}{
      {\left(1 - X Y^{6} \right)}
      {\left(1 - X Y^{5} \right)}
      {\left(1 - X^{3} Y \right)}
      {\left(1 - X^{2} Y \right)}
      {\left(1 - X Y \right)}
      {\left(1 - Y^3 \right)}
      {\left(1 - Y \right)}
    }$ &
    $X^{7} Y^{10}$
    \\
    m6\_2\_5 &
    $\frac 1{
    {\left(1 - X^{4} Y\right)}
    {\left(1 - X^{3} Y\right)}
    {\left(1 - X^{2} Y\right)}
    {\left(1 - X Y\right)}
    {\left(1 - Y^3\right)}
    {\left(1 - Y\right)}}$
      &
    $X^{10} Y^{8}$
    \\
    m6\_2\_6 &
    $\frac 1{
    {\left(1 - X^{4} Y\right)}
    {\left(1 - X^{3} Y\right)}
    {\left(1 - X^{2} Y\right)}
    {\left(1 - X Y\right)}
    {\left(1 - Y^5 \right)}
    {\left(1 - Y\right)}}$
    & $X^{10} Y^{10}$
    \\
    m6\_3\_1 &
    $\frac{1 - Y^{8}} {
      {\left(1 - X Y^{4} \right)}
      {\left(1 - X^{2} Y \right)}
      {\left(1 - X Y \right)}
      {\left(1 - Y^5\right)}
      {\left(1 - Y^4\right)}
      {\left(1 - Y^3\right)}
      {\left(1 - Y\right)}
      }$
    & $X^{4} Y^{11}$
    \\
    m6\_3\_2&
    $\frac{Y^{8} + Y^{7} + 2 \, Y^{6} + 2 \, Y^{5} + 2 \, Y^{4} + 2 \, Y^{3} +
      Y^{2} + Y + 1}{
      {\left(1 - X Y^{3} \right)}
      {\left(1 - X^{2} Y \right)} 
      {\left(1 - X Y \right)}
      {\left(1 - Y^6\right)}
      {\left(1 - Y^5 \right)}
      {\left(1 - Y^{4} \right)}
    }$ &
    \ding{55}
    \\
    m6\_3\_3  &
    same as for m6\_3\_2  &
    \ding{55}\\
    m6\_3\_4 &
    $1/\bigl(
    {\left(1 - X Y^{3} \right)}
    {\left(1 - X^{2} Y \right)}
    {\left(1 - X Y \right)}
    {\left(1 - Y^5\right)}
    {\left(1 - Y^3\right)}
    {\left(1 - Y\right)}
    \bigr)$ &
    $X^{4} Y^{14}$
    \\
    m6\_3\_5 & same as for m6\_3\_4 & $X^{4} Y^{14}$ \\
    m6\_3\_6 &
    $1 /\bigl(
    {\left(1 - X^{3} Y\right)}
    {\left(1 - X^{2} Y\right)}
    {\left(1 - X Y\right)}
    {\left(1 - Y^4\right)}
    {\left(1 - Y^3\right)}
    {\left(1 - Y\right)}
    \bigr)
      $
      & $X^{6} Y^{11}$
    \\
    m6\_4\_1 &
    $\frac{Y^{3} - Y + 1}{
      {\left(1 - X Y^{4} \right)}
      {\left(1 - X Y \right)}
      {\left(1 - Y^6 \right)}
      {\left(1 - Y^5 \right)}
      {\left(1 - Y \right)}^{2}}$ &
    \ding{55}
    \\
    m6\_4\_2 &
    $\frac{1 - Y^{8}}
    {{\left(1 - X Y^{4} \right)}
      {\left(1 - X Y \right)}
      {\left(1 - Y^6\right)}
      {\left(1 - Y^5 \right)}
      {\left(1 - Y^4\right)}
      {\left(1 - Y^3 \right)}
      {\left(1 - Y\right)}}$
    &
    $X^{2} Y^{16}$
    \\
    m6\_4\_3 & 
    $1 /\bigl(
    {\left(1 - X^{2} Y\right)}
    {\left(1 - X Y\right)}
    {\left(1 - Y^5\right)}
    {\left(1 - Y^4\right)}
    {\left(1 - Y^3\right)}
    {\left(1 - Y\right)}
      \bigr)$ & 
    $X^{3} Y^{15}$
    \\
    m6\_5\_1 & 
    $1 / \bigl(
    {\left(1 - X Y \right)}
    {\left(1 - Y^6 \right)}
    {\left(1 - Y^5 \right)}
    {\left(1 - Y^4 \right)}
    {\left(1 - Y^3 \right)}
    {\left(1 - Y \right)}
    \bigr)$ &
    $X Y^{20}$
    \\
    m6\_5\_2 &
    same as for m6\_5\_1 & $X Y^{20}$
  \end{tabular}
  \caption{Examples of generic local graded ideal zeta functions}
  \label{t:grid}
\end{table}}

{\small
\begin{table}[h]
  \centering
  \begin{tabular}{rll}
    $\bm\fg$ & $W(X,Y)$ s.t.\ $\zeta_{\fg\otimes \fO_K}^{\gr\le}(s) = W(q_K,q_K^{-s})$ & FEqn \\
      \hline
    m3\_2 & 
    $\frac{1 - XY^3}{(1 - XY^2)(1 - XY)(1-Y^2)(1-Y)}$  &
    $-XY^3$ \\
    m4\_2 &
    $\frac{1 - XY^3}{(1 - X^2Y)(1 - XY^2)(1 - XY)(1-Y^2)(1-Y)}$ &
    $X^3Y^4$ \\
    m4\_3 &
    $\frac{X^{2} Y^{9} + X^{2} Y^{7} + X^{2} Y^{6} - X Y^{6} - 2 \, X Y^{5} - 2
      \, X Y^{4} - X Y^{3} + Y^{3} + Y^{2} + 1}
    {{\left(1 - X Y^{3}\right)} {\left(1 - X Y^{2}\right)} {\left(1 - X
          Y\right)} {\left(1- Y^4\right)} {\left(1-Y^2\right)}
      {\left(1-Y\right)}}$ &
    $XY^4$ \\
    m5\_2\_1 &
    $\frac{-X^{2} Y^{5} -X^{2} Y^{3} - X Y^{3} + X Y^{2} + Y^{2} + 1}
    {{\left(1 - X^{2} Y\right)} {\left(1 - X Y^{2}\right)} {\left(1 - X^2
          Y^2\right)} {\left(1 - X Y\right)} {\left(1 - Y^3\right)}
      {\left(1 - Y\right)}}$
    &
    $-X^{4} Y^{5}$
    \\
    m5\_2\_2 &
    $\frac{1 - X Y^{3}}{
      {\left(1 - X Y^{2} \right)}
      {\left(1 - X^{3} Y\right)}
      {\left(1 - X^{2} Y \right)}
      {\left(1 - X Y \right)}
      {\left(1 - Y^2 \right)}
      {\left(1 - Y \right)}
    }$ &
    $-X^{6} Y^{5}$
    \\
    m5\_2\_3 &
    $ \frac{-X^{4} Y^{7} - X^{3} Y^{6} - X^{3} Y^{4} - X^{2} Y^{5} + X^{3} Y^{3}
      - X Y^{4} + X^{2} Y^{2} + X Y^{3} + X Y + 1}{
      {\left(1 - X^3 Y^3 \right)}
      {\left(1 - X^{2} Y^{3} \right)}
      {\left(1 - X^{3} Y \right)}
      {\left(1 - X^{2} Y \right)}
      {\left(1 - Y^3 \right)}
      {\left(1 - Y \right)}
    }$ 
    & $-X^{6} Y^{5}$
    \\
    m5\_3\_1 &
    $W_{531}$ \eqref{subalgebras:m5_3_1}
    & $-X^2Y^5$
    \\
    m5\_3\_2 &
    $\frac{X^{2} Y^{9} + X^{2} Y^{7} + X^{2} Y^{6} - X Y^{6} - 2 \, X Y^{5} - 2
      \, X Y^{4} - X Y^{3} + Y^{3} + Y^{2} + 1}
    {
      {\left(1 - X Y^{3} \right)}
      {\left(1 - X Y^{2} \right)}
      {\left(1 - X^{2} Y \right)}
      {\left(1 - X Y \right)}
      {\left(1 - Y^{4}\right)}
      {\left(1 - Y^2\right)}
      {\left(1 - Y\right)}
    }$ &
    $-X^{3} Y^{5}$
    \\
    m5\_4\_1 &
    $W_{541}$ \eqref{subalgebras:m5_4_1}
    & $-XY^5$
    \\
    m6\_2\_1 &
    $W_{621}$ \eqref{subalgebras:m6_2_1}
    & $X^{6} Y^{6}$
    \\
    m6\_2\_2 &
    $\frac{-X^{2} Y^{5} - X^{2} Y^{3} - X Y^{3} + X Y^{2} + Y^{2} + 1}{
      {\left(1 - X^2 Y^2 \right)}
      {\left(1 - X Y^{2} \right)}
      {\left(1 - X^{3} Y \right)}
      {\left(1 - X^{2} Y \right)}
      {\left(1 - X Y \right)}
      {\left(1 - Y^3\right)}
      {\left(1 - Y\right)}
    }$ &
    $X^{7} Y^{6}$
    \\
    m6\_2\_3 & 
    $W_{623}$ \eqref{subalgebras:m6_2_3} &
    $X^7Y^6$
    \\
    m6\_2\_5 &
    $\frac{1 - X Y^{3}}{
      {\left(1 - X Y^{2} \right)}
      {\left(1 - X^{4} Y \right)}
      {\left(1 - X^{3} Y \right)}
      {\left(1 - X^{2} Y \right)}
      {\left(1 - X Y \right)}
      {\left(1 - Y^2 \right)}
      {\left(1 - Y \right)}}
    $ &
    $X^{10} Y^{6}$
    \\
    m6\_2\_6 &
    $\frac{
      -X^{4} Y^{7} - X^{3} Y^{6} - X^{3} Y^{4} - X^{2} Y^{5} + X^{3} Y^{3} -
      X Y^{4} + X^{2} Y^{2} + X Y^{3} + X Y + 1}{
      {\left(1 - X^{3} Y^{3}\right)}
      {\left(1 - X^{2} Y^{3}\right)}
      {\left(1 - X^{4} Y\right)}
      {\left(1 - X^{3} Y \right)}
      {\left(1 - X^{2} Y\right)}
      {\left(1 - Y^3\right)}
      {\left(1 - Y\right)}
    }$ &
    $X^{10} Y^{6}$
    \\
    m6\_3\_1 & 
    $W_{631}$ \eqref{subalgebras:m6_3_1}
    & $X^4 Y^6$
    \\
    m6\_3\_2 &
    $W_{632}$ \eqref{subalgebras:m6_3_2}
    & $X^4 Y^6$
    \\
    m6\_3\_3 & 
    $W_{633}$ \eqref{subalgebras:m6_3_3}
    & $X^4 Y^6$
    \\
    m6\_3\_6 &
    $\frac{X^{2} Y^{9} + X^{2} Y^{7} + X^{2} Y^{6} - X Y^{6} - 2 \, X Y^{5} - 2
      \, X Y^{4} - X Y^{3} + Y^{3} + Y^{2} + 1}{
      {\left(1 - X Y^{3} \right)} 
      {\left(1 - X Y^{2} \right)}
      {\left(1 - X^{3} Y \right)}
      {\left(1 - X^{2} Y \right)}
      {\left(1 - X Y \right)}
      {\left(1 - Y^{4} \right)}
      {\left(1 - Y^2\right)}
      {\left(1 - Y\right)}
    }$ & $X^{6} Y^{6}$
    \\
    m6\_4\_3 &
    $W_{643}$ \eqref{subalgebras:m6_4_3}
    & $X^3 Y^6$
  \end{tabular}
  \caption{Examples of generic local graded subalgebra zeta functions}
  \label{t:grsub}
\end{table}}

\appendix

\section{Large numerators of  local subobject zeta functions}
\label{s:numerators}

{\scriptsize\begin{align*}
  F_{\Uni_5}  = \,\, &
X^{43} Y^{124} + X^{42} Y^{121} -  X^{42} Y^{120} -  X^{42} Y^{119} - 2 X^{42} Y^{118} + 2 X^{41} Y^{118} - 3 X^{41} Y^{117} \\
& + X^{42} Y^{115} - 2 X^{41} Y^{116} + X^{42} Y^{114} - 3 X^{41} Y^{115} - 2 X^{40} Y^{116} -  X^{42} Y^{113} -  X^{41} Y^{114} \\
& + 2 X^{40} Y^{115} + 4 X^{41} Y^{113} - 2 X^{40} Y^{114} -  X^{39} Y^{115} - 2 X^{40} Y^{113} - 2 X^{39} Y^{114} + X^{41} Y^{111} \\
& + 6 X^{40} Y^{112} - 3 X^{39} Y^{113} -  X^{41} Y^{110} + X^{40} Y^{111} - 4 X^{39} Y^{112} + 5 X^{40} Y^{110} + 6 X^{39} Y^{111} \\
& + X^{38} Y^{112} + 3 X^{39} Y^{110} - 6 X^{38} Y^{111} -  X^{40} Y^{108} + 8 X^{39} Y^{109} + 2 X^{38} Y^{110} - 2 X^{40} Y^{107} \\
& + 4 X^{39} Y^{108} + 5 X^{38} Y^{109} - 3 X^{37} Y^{110} + X^{39} Y^{107} + 9 X^{38} Y^{108} - 4 X^{39} Y^{106} + 8 X^{38} Y^{107} \\
& + X^{37} Y^{108} - 4 X^{39} Y^{105} + 3 X^{38} Y^{106} + 6 X^{37} Y^{107} - 2 X^{36} Y^{108} -  X^{39} Y^{104} - 5 X^{38} Y^{105} \\
& + 13 X^{37} Y^{106} + 2 X^{36} Y^{107} - 8 X^{38} Y^{104} + 9 X^{37} Y^{105} + 3 X^{36} Y^{106} -  X^{35} Y^{107} + 2 X^{39} Y^{102} \\
& -7 X^{38} Y^{103} - 3 X^{37} Y^{104} + 8 X^{36} Y^{105} + X^{35} Y^{106} - 6 X^{38} Y^{102} - 15 X^{37} Y^{103} + 5 X^{36} Y^{104} \\
& - X^{35} Y^{105} + 4 X^{38} Y^{101} - 15 X^{37} Y^{102} + 6 X^{36} Y^{103} + 12 X^{35} Y^{104} -  X^{38} Y^{100} - 16 X^{37} Y^{101} \\
& -10 X^{36} Y^{102} + 7 X^{35} Y^{103} - 2 X^{34} Y^{104} + 3 X^{38} Y^{99} + 4 X^{37} Y^{100} - 22 X^{36} Y^{101} + 8 X^{35} Y^{102} \\
& + 5 X^{34} Y^{103} + X^{38} Y^{98} - 28 X^{36} Y^{100} - 8 X^{35} Y^{101} + 8 X^{37} Y^{98} - 8 X^{36} Y^{99} - 19 X^{35} Y^{100} \\
& + 13 X^{34} Y^{101} + 2 X^{33} Y^{102} + X^{37} Y^{97} - 2 X^{36} Y^{98} - 30 X^{35} Y^{99} + X^{34} Y^{100} + X^{33} Y^{101} \\
& - X^{37} Y^{96} + 17 X^{36} Y^{97} - 19 X^{35} Y^{98} - 16 X^{34} Y^{99} + 6 X^{33} Y^{100} + 17 X^{36} Y^{96} - 6 X^{35} Y^{97} \\
& -34 X^{34} Y^{98} + 2 X^{33} Y^{99} + X^{32} Y^{100} -  X^{37} Y^{94} + 7 X^{36} Y^{95} + 16 X^{35} Y^{96} - 32 X^{34} Y^{97} \\
& + 2 X^{32} Y^{99} -  X^{36} Y^{94} + 32 X^{35} Y^{95} - 15 X^{34} Y^{96} - 21 X^{33} Y^{97} + 2 X^{32} Y^{98} - 5 X^{36} Y^{93} \\
& + 16 X^{35} Y^{94} + 4 X^{34} Y^{95} - 32 X^{33} Y^{96} - 2 X^{32} Y^{97} -  X^{36} Y^{92} + 17 X^{35} Y^{93} + 56 X^{34} Y^{94} \\
& -22 X^{33} Y^{95} - 13 X^{32} Y^{96} + 3 X^{31} Y^{97} -  X^{36} Y^{91} - 5 X^{35} Y^{92} + 30 X^{34} Y^{93} - 13 X^{33} Y^{94} \\
& -22 X^{32} Y^{95} - 7 X^{35} Y^{91} + 37 X^{34} Y^{92} + 59 X^{33} Y^{93} - 25 X^{32} Y^{94} - 5 X^{31} Y^{95} - 3 X^{35} Y^{90} \\
& -5 X^{34} Y^{91} + 41 X^{33} Y^{92} - 29 X^{32} Y^{93} - 17 X^{31} Y^{94} -  X^{35} Y^{89} - 17 X^{34} Y^{90} + 69 X^{33} Y^{91} \\
& + 41 X^{32} Y^{92} - 22 X^{31} Y^{93} + X^{30} Y^{94} - 11 X^{34} Y^{89} + 18 X^{33} Y^{90} + 48 X^{32} Y^{91} - 26 X^{31} Y^{92} \\
& -5 X^{30} Y^{93} - 12 X^{34} Y^{88} - 26 X^{33} Y^{89} + 89 X^{32} Y^{90} + 13 X^{31} Y^{91} - 10 X^{30} Y^{92} -  X^{29} Y^{93} \\
& + 2 X^{34} Y^{87} - 26 X^{33} Y^{88} + 52 X^{32} Y^{89} + 33 X^{31} Y^{90} - 23 X^{30} Y^{91} - 3 X^{29} Y^{92} + 3 X^{34} Y^{86} \\
& -34 X^{33} Y^{87} - 24 X^{32} Y^{88} + 82 X^{31} Y^{89} - 4 X^{29} Y^{91} + X^{34} Y^{85} - 10 X^{33} Y^{86} - 38 X^{32} Y^{87} \\
& + 88 X^{31} Y^{88} + 24 X^{30} Y^{89} - 11 X^{29} Y^{90} - 3 X^{33} Y^{85} - 69 X^{32} Y^{86} + 4 X^{31} Y^{87} + 66 X^{30} Y^{88} \\
& -6 X^{29} Y^{89} - 2 X^{28} Y^{90} + 7 X^{33} Y^{84} - 30 X^{32} Y^{85} - 31 X^{31} Y^{86} + 101 X^{30} Y^{87} - 5 X^{28} Y^{89} \\
& + 4 X^{33} Y^{83} - 11 X^{32} Y^{84} - 103 X^{31} Y^{85} + 28 X^{30} Y^{86} + 37 X^{29} Y^{87} - 4 X^{28} Y^{88} + 10 X^{32} Y^{83} \\
& -77 X^{31} Y^{84} - 5 X^{30} Y^{85} + 91 X^{29} Y^{86} - 2 X^{28} Y^{87} + X^{33} Y^{81} + 9 X^{32} Y^{82} - 40 X^{31} Y^{83} \\
& -99 X^{30} Y^{84} + 53 X^{29} Y^{85} + 21 X^{28} Y^{86} - 2 X^{27} Y^{87} + 4 X^{32} Y^{81} + 9 X^{31} Y^{82} - 115 X^{30} Y^{83} \\
& + 20 X^{29} Y^{84} + 53 X^{28} Y^{85} - 6 X^{27} Y^{86} + 4 X^{32} Y^{80} + 32 X^{31} Y^{81} - 78 X^{30} Y^{82} - 80 X^{29} Y^{83} \\
& + 56 X^{28} Y^{84} + 5 X^{27} Y^{85} - 2 X^{32} Y^{79} + 22 X^{31} Y^{80} - 18 X^{30} Y^{81} - 148 X^{29} Y^{82} + 44 X^{28} Y^{83} \\
& + 30 X^{27} Y^{84} -  X^{26} Y^{85} + 11 X^{31} Y^{79} + 42 X^{30} Y^{80} - 114 X^{29} Y^{81} - 25 X^{28} Y^{82} + 46 X^{27} Y^{83} \\
& + 3 X^{31} Y^{78} + 56 X^{30} Y^{79} - 64 X^{29} Y^{80} - 138 X^{28} Y^{81} + 32 X^{27} Y^{82} + 6 X^{26} Y^{83} -  X^{31} Y^{77} \\
& + 36 X^{30} Y^{78} + 35 X^{29} Y^{79} - 143 X^{28} Y^{80} + 10 X^{27} Y^{81} + 22 X^{26} Y^{82} - 3 X^{31} Y^{76} + 14 X^{30} Y^{77} \\
& + 89 X^{29} Y^{78} - 118 X^{28} Y^{79} - 100 X^{27} Y^{80} + 29 X^{26} Y^{81} + 4 X^{25} Y^{82} + X^{31} Y^{75} - 3 X^{30} Y^{76} \\
& + 76 X^{29} Y^{77} - 130 X^{27} Y^{79} + 33 X^{26} Y^{80} + 10 X^{25} Y^{81} - 7 X^{30} Y^{75} + 50 X^{29} Y^{76} + 107 X^{28} Y^{77} \\
& -152 X^{27} Y^{78} - 62 X^{26} Y^{79} + 13 X^{25} Y^{80} + X^{30} Y^{74} + 6 X^{29} Y^{75} + 121 X^{28} Y^{76} - 71 X^{27} Y^{77} \\
& -99 X^{26} Y^{78} + 23 X^{25} Y^{79} + X^{24} Y^{80} - 3 X^{30} Y^{73} - 19 X^{29} Y^{74} + 100 X^{28} Y^{75} + 88 X^{27} Y^{76} \\
& -137 X^{26} Y^{77} - 16 X^{25} Y^{78} + 8 X^{24} Y^{79} - 12 X^{29} Y^{73} + 34 X^{28} Y^{74} + 145 X^{27} Y^{75} - 119 X^{26} Y^{76} \\
& -53 X^{25} Y^{77} + 16 X^{24} Y^{78} - 7 X^{29} Y^{72} - 16 X^{28} Y^{73} + 167 X^{27} Y^{74} + 47 X^{26} Y^{75} - 107 X^{25} Y^{76} \\
& -3 X^{24} Y^{77} + X^{23} Y^{78} -  X^{29} Y^{71} - 28 X^{28} Y^{72} + 84 X^{27} Y^{73} + 118 X^{26} Y^{74} - 132 X^{25} Y^{75} \\
& -29 X^{24} Y^{76} + 4 X^{23} Y^{77} - 2 X^{29} Y^{70} - 27 X^{28} Y^{71} - 12 X^{27} Y^{72} + 209 X^{26} Y^{73} + 6 X^{25} Y^{74} \\
& -55 X^{24} Y^{75} + 5 X^{23} Y^{76} - 13 X^{28} Y^{70} - 54 X^{27} Y^{71} + 156 X^{26} Y^{72} + 80 X^{25} Y^{73} - 117 X^{24} Y^{74} \\
& -6 X^{23} Y^{75} + X^{22} Y^{76} -  X^{28} Y^{69} - 57 X^{27} Y^{70} + 38 X^{26} Y^{71} + 213 X^{25} Y^{72} - 39 X^{24} Y^{73} \\
& -19 X^{23} Y^{74} + X^{22} Y^{75} + 2 X^{28} Y^{68} - 33 X^{27} Y^{69} - 51 X^{26} Y^{70} + 208 X^{25} Y^{71} + 28 X^{24} Y^{72} \\
& -77 X^{23} Y^{73} - 4 X^{22} Y^{74} - 13 X^{27} Y^{68} - 114 X^{26} Y^{69} + 107 X^{25} Y^{70} + 164 X^{24} Y^{71} - 41 X^{23} Y^{72} \\
& -2 X^{22} Y^{73} - 80 X^{26} Y^{68} - 11 X^{25} Y^{69} + 221 X^{24} Y^{70} - 5 X^{23} Y^{71} - 35 X^{22} Y^{72} -  X^{21} Y^{73} \\
& + 6 X^{27} Y^{66} - 35 X^{26} Y^{67} - 132 X^{25} Y^{68} + 154 X^{24} Y^{69} + 87 X^{23} Y^{70} - 36 X^{22} Y^{71} -  X^{21} Y^{72} \\
& + 3 X^{27} Y^{65} - 131 X^{25} Y^{67} + 44 X^{24} Y^{68} + 200 X^{23} Y^{69} - 22 X^{22} Y^{70} - 14 X^{21} Y^{71} + 10 X^{26} Y^{65} \\
& -97 X^{25} Y^{66} - 124 X^{24} Y^{67} + 176 X^{23} Y^{68} + 33 X^{22} Y^{69} - 21 X^{21} Y^{70} + 11 X^{26} Y^{64} - 18 X^{25} Y^{65} \\
& -177 X^{24} Y^{66} + 109 X^{23} Y^{67} + 135 X^{22} Y^{68} - 15 X^{21} Y^{69} -  X^{20} Y^{70} + 7 X^{26} Y^{63} + 21 X^{25} Y^{64} \\
& -162 X^{24} Y^{65} - 73 X^{23} Y^{66} + 144 X^{22} Y^{67} - 3 X^{21} Y^{68} - 8 X^{20} Y^{69} + 2 X^{26} Y^{62} + 35 X^{25} Y^{63} \\
& -62 X^{24} Y^{64} - 189 X^{23} Y^{65} + 145 X^{22} Y^{66} + 71 X^{21} Y^{67} - 8 X^{20} Y^{68} + X^{26} Y^{61} + 16 X^{25} Y^{62} \\
& - X^{24} Y^{63} - 216 X^{23} Y^{64} - 7 X^{22} Y^{65} + 101 X^{21} Y^{66} - 9 X^{20} Y^{67} - 2 X^{19} Y^{68} + 5 X^{25} Y^{61} \\
& + 63 X^{24} Y^{62} - 131 X^{23} Y^{63} - 155 X^{22} Y^{64} + 132 X^{21} Y^{65} + 36 X^{20} Y^{66} + 5 X^{25} Y^{60} + 51 X^{24} Y^{61} \\
& -32 X^{23} Y^{62} - 241 X^{22} Y^{63} + 48 X^{21} Y^{64} + 53 X^{20} Y^{65} - 8 X^{19} Y^{66} - 2 X^{25} Y^{59} + 21 X^{24} Y^{60} \\
& + 76 X^{23} Y^{61} - 205 X^{22} Y^{62} - 92 X^{21} Y^{63} + 86 X^{20} Y^{64} + 10 X^{19} Y^{65} + 10 X^{24} Y^{59} + 86 X^{23} Y^{60} \\
& -92 X^{22} Y^{61} - 205 X^{21} Y^{62} + 76 X^{20} Y^{63} + 21 X^{19} Y^{64} - 2 X^{18} Y^{65} - 8 X^{24} Y^{58} + 53 X^{23} Y^{59} \\
& + 48 X^{22} Y^{60} - 241 X^{21} Y^{61} - 32 X^{20} Y^{62} + 51 X^{19} Y^{63} + 5 X^{18} Y^{64} + 36 X^{23} Y^{58} + 132 X^{22} Y^{59} \\
& -155 X^{21} Y^{60} - 131 X^{20} Y^{61} + 63 X^{19} Y^{62} + 5 X^{18} Y^{63} - 2 X^{24} Y^{56} - 9 X^{23} Y^{57} + 101 X^{22} Y^{58} \\
& -7 X^{21} Y^{59} - 216 X^{20} Y^{60} -  X^{19} Y^{61} + 16 X^{18} Y^{62} + X^{17} Y^{63} - 8 X^{23} Y^{56} + 71 X^{22} Y^{57} \\
& + 145 X^{21} Y^{58} - 189 X^{20} Y^{59} - 62 X^{19} Y^{60} + 35 X^{18} Y^{61} + 2 X^{17} Y^{62} - 8 X^{23} Y^{55} - 3 X^{22} Y^{56} \\
& + 144 X^{21} Y^{57} - 73 X^{20} Y^{58} - 162 X^{19} Y^{59} + 21 X^{18} Y^{60} + 7 X^{17} Y^{61} -  X^{23} Y^{54} - 15 X^{22} Y^{55} \\
& + 135 X^{21} Y^{56} + 109 X^{20} Y^{57} - 177 X^{19} Y^{58} - 18 X^{18} Y^{59} + 11 X^{17} Y^{60} - 21 X^{22} Y^{54} + 33 X^{21} Y^{55} \\
& + 176 X^{20} Y^{56} - 124 X^{19} Y^{57} - 97 X^{18} Y^{58} + 10 X^{17} Y^{59} - 14 X^{22} Y^{53} - 22 X^{21} Y^{54} + 200 X^{20} Y^{55} \\
& + 44 X^{19} Y^{56} - 131 X^{18} Y^{57} + 3 X^{16} Y^{59} -  X^{22} Y^{52} - 36 X^{21} Y^{53} + 87 X^{20} Y^{54} + 154 X^{19} Y^{55} \\
& -132 X^{18} Y^{56} - 35 X^{17} Y^{57} + 6 X^{16} Y^{58} -  X^{22} Y^{51} - 35 X^{21} Y^{52} - 5 X^{20} Y^{53} + 221 X^{19} Y^{54} \\
& -11 X^{18} Y^{55} - 80 X^{17} Y^{56} - 2 X^{21} Y^{51} - 41 X^{20} Y^{52} + 164 X^{19} Y^{53} + 107 X^{18} Y^{54} - 114 X^{17} Y^{55} \\
& -13 X^{16} Y^{56} - 4 X^{21} Y^{50} - 77 X^{20} Y^{51} + 28 X^{19} Y^{52} + 208 X^{18} Y^{53} - 51 X^{17} Y^{54} - 33 X^{16} Y^{55} \\
& + 2 X^{15} Y^{56} + X^{21} Y^{49} - 19 X^{20} Y^{50} - 39 X^{19} Y^{51} + 213 X^{18} Y^{52} + 38 X^{17} Y^{53} - 57 X^{16} Y^{54} \\
& - X^{15} Y^{55} + X^{21} Y^{48} - 6 X^{20} Y^{49} - 117 X^{19} Y^{50} + 80 X^{18} Y^{51} + 156 X^{17} Y^{52} - 54 X^{16} Y^{53} \\
& -13 X^{15} Y^{54} + 5 X^{20} Y^{48} - 55 X^{19} Y^{49} + 6 X^{18} Y^{50} + 209 X^{17} Y^{51} - 12 X^{16} Y^{52} - 27 X^{15} Y^{53} \\
& -2 X^{14} Y^{54} + 4 X^{20} Y^{47} - 29 X^{19} Y^{48} - 132 X^{18} Y^{49} + 118 X^{17} Y^{50} + 84 X^{16} Y^{51} - 28 X^{15} Y^{52} \\
& - X^{14} Y^{53} + X^{20} Y^{46} - 3 X^{19} Y^{47} - 107 X^{18} Y^{48} + 47 X^{17} Y^{49} + 167 X^{16} Y^{50} - 16 X^{15} Y^{51} \\
& -7 X^{14} Y^{52} + 16 X^{19} Y^{46} - 53 X^{18} Y^{47} - 119 X^{17} Y^{48} + 145 X^{16} Y^{49} + 34 X^{15} Y^{50} - 12 X^{14} Y^{51} \\
& + 8 X^{19} Y^{45} - 16 X^{18} Y^{46} - 137 X^{17} Y^{47} + 88 X^{16} Y^{48} + 100 X^{15} Y^{49} - 19 X^{14} Y^{50} - 3 X^{13} Y^{51} \\
& + X^{19} Y^{44} + 23 X^{18} Y^{45} - 99 X^{17} Y^{46} - 71 X^{16} Y^{47} + 121 X^{15} Y^{48} + 6 X^{14} Y^{49} + X^{13} Y^{50} \\
& + 13 X^{18} Y^{44} - 62 X^{17} Y^{45} - 152 X^{16} Y^{46} + 107 X^{15} Y^{47} + 50 X^{14} Y^{48} - 7 X^{13} Y^{49} + 10 X^{18} Y^{43} \\
& + 33 X^{17} Y^{44} - 130 X^{16} Y^{45} + 76 X^{14} Y^{47} - 3 X^{13} Y^{48} + X^{12} Y^{49} + 4 X^{18} Y^{42} + 29 X^{17} Y^{43} \\
& -100 X^{16} Y^{44} - 118 X^{15} Y^{45} + 89 X^{14} Y^{46} + 14 X^{13} Y^{47} - 3 X^{12} Y^{48} + 22 X^{17} Y^{42} + 10 X^{16} Y^{43} \\
& -143 X^{15} Y^{44} + 35 X^{14} Y^{45} + 36 X^{13} Y^{46} -  X^{12} Y^{47} + 6 X^{17} Y^{41} + 32 X^{16} Y^{42} - 138 X^{15} Y^{43} \\
& -64 X^{14} Y^{44} + 56 X^{13} Y^{45} + 3 X^{12} Y^{46} + 46 X^{16} Y^{41} - 25 X^{15} Y^{42} - 114 X^{14} Y^{43} + 42 X^{13} Y^{44} \\
& + 11 X^{12} Y^{45} -  X^{17} Y^{39} + 30 X^{16} Y^{40} + 44 X^{15} Y^{41} - 148 X^{14} Y^{42} - 18 X^{13} Y^{43} + 22 X^{12} Y^{44} \\
& -2 X^{11} Y^{45} + 5 X^{16} Y^{39} + 56 X^{15} Y^{40} - 80 X^{14} Y^{41} - 78 X^{13} Y^{42} + 32 X^{12} Y^{43} + 4 X^{11} Y^{44} \\
& -6 X^{16} Y^{38} + 53 X^{15} Y^{39} + 20 X^{14} Y^{40} - 115 X^{13} Y^{41} + 9 X^{12} Y^{42} + 4 X^{11} Y^{43} - 2 X^{16} Y^{37} \\
& + 21 X^{15} Y^{38} + 53 X^{14} Y^{39} - 99 X^{13} Y^{40} - 40 X^{12} Y^{41} + 9 X^{11} Y^{42} + X^{10} Y^{43} - 2 X^{15} Y^{37} \\
& + 91 X^{14} Y^{38} - 5 X^{13} Y^{39} - 77 X^{12} Y^{40} + 10 X^{11} Y^{41} - 4 X^{15} Y^{36} + 37 X^{14} Y^{37} + 28 X^{13} Y^{38} \\
& -103 X^{12} Y^{39} - 11 X^{11} Y^{40} + 4 X^{10} Y^{41} - 5 X^{15} Y^{35} + 101 X^{13} Y^{37} - 31 X^{12} Y^{38} - 30 X^{11} Y^{39} \\
& + 7 X^{10} Y^{40} - 2 X^{15} Y^{34} - 6 X^{14} Y^{35} + 66 X^{13} Y^{36} + 4 X^{12} Y^{37} - 69 X^{11} Y^{38} - 3 X^{10} Y^{39} \\
& -11 X^{14} Y^{34} + 24 X^{13} Y^{35} + 88 X^{12} Y^{36} - 38 X^{11} Y^{37} - 10 X^{10} Y^{38} + X^{9} Y^{39} - 4 X^{14} Y^{33} \\
& + 82 X^{12} Y^{35} - 24 X^{11} Y^{36} - 34 X^{10} Y^{37} + 3 X^{9} Y^{38} - 3 X^{14} Y^{32} - 23 X^{13} Y^{33} + 33 X^{12} Y^{34} \\
& + 52 X^{11} Y^{35} - 26 X^{10} Y^{36} + 2 X^{9} Y^{37} -  X^{14} Y^{31} - 10 X^{13} Y^{32} + 13 X^{12} Y^{33} + 89 X^{11} Y^{34} \\
& -26 X^{10} Y^{35} - 12 X^{9} Y^{36} - 5 X^{13} Y^{31} - 26 X^{12} Y^{32} + 48 X^{11} Y^{33} + 18 X^{10} Y^{34} - 11 X^{9} Y^{35} \\
& + X^{13} Y^{30} - 22 X^{12} Y^{31} + 41 X^{11} Y^{32} + 69 X^{10} Y^{33} - 17 X^{9} Y^{34} -  X^{8} Y^{35} - 17 X^{12} Y^{30} \\
& -29 X^{11} Y^{31} + 41 X^{10} Y^{32} - 5 X^{9} Y^{33} - 3 X^{8} Y^{34} - 5 X^{12} Y^{29} - 25 X^{11} Y^{30} + 59 X^{10} Y^{31} \\
& + 37 X^{9} Y^{32} - 7 X^{8} Y^{33} - 22 X^{11} Y^{29} - 13 X^{10} Y^{30} + 30 X^{9} Y^{31} - 5 X^{8} Y^{32} -  X^{7} Y^{33} \\
& + 3 X^{12} Y^{27} - 13 X^{11} Y^{28} - 22 X^{10} Y^{29} + 56 X^{9} Y^{30} + 17 X^{8} Y^{31} -  X^{7} Y^{32} - 2 X^{11} Y^{27} \\
& -32 X^{10} Y^{28} + 4 X^{9} Y^{29} + 16 X^{8} Y^{30} - 5 X^{7} Y^{31} + 2 X^{11} Y^{26} - 21 X^{10} Y^{27} - 15 X^{9} Y^{28} \\
& + 32 X^{8} Y^{29} -  X^{7} Y^{30} + 2 X^{11} Y^{25} - 32 X^{9} Y^{27} + 16 X^{8} Y^{28} + 7 X^{7} Y^{29} -  X^{6} Y^{30} \\
& + X^{11} Y^{24} + 2 X^{10} Y^{25} - 34 X^{9} Y^{26} - 6 X^{8} Y^{27} + 17 X^{7} Y^{28} + 6 X^{10} Y^{24} - 16 X^{9} Y^{25} \\
& -19 X^{8} Y^{26} + 17 X^{7} Y^{27} -  X^{6} Y^{28} + X^{10} Y^{23} + X^{9} Y^{24} - 30 X^{8} Y^{25} - 2 X^{7} Y^{26} \\
& + X^{6} Y^{27} + 2 X^{10} Y^{22} + 13 X^{9} Y^{23} - 19 X^{8} Y^{24} - 8 X^{7} Y^{25} + 8 X^{6} Y^{26} - 8 X^{8} Y^{23} \\
& -28 X^{7} Y^{24} + X^{5} Y^{26} + 5 X^{9} Y^{21} + 8 X^{8} Y^{22} - 22 X^{7} Y^{23} + 4 X^{6} Y^{24} + 3 X^{5} Y^{25} \\
& -2 X^{9} Y^{20} + 7 X^{8} Y^{21} - 10 X^{7} Y^{22} - 16 X^{6} Y^{23} -  X^{5} Y^{24} + 12 X^{8} Y^{20} + 6 X^{7} Y^{21} \\
& -15 X^{6} Y^{22} + 4 X^{5} Y^{23} -  X^{8} Y^{19} + 5 X^{7} Y^{20} - 15 X^{6} Y^{21} - 6 X^{5} Y^{22} + X^{8} Y^{18} \\
& + 8 X^{7} Y^{19} - 3 X^{6} Y^{20} - 7 X^{5} Y^{21} + 2 X^{4} Y^{22} -  X^{8} Y^{17} + 3 X^{7} Y^{18} + 9 X^{6} Y^{19} \\
& -8 X^{5} Y^{20} + 2 X^{7} Y^{17} + 13 X^{6} Y^{18} - 5 X^{5} Y^{19} -  X^{4} Y^{20} - 2 X^{7} Y^{16} + 6 X^{6} Y^{17} \\
& + 3 X^{5} Y^{18} - 4 X^{4} Y^{19} + X^{6} Y^{16} + 8 X^{5} Y^{17} - 4 X^{4}
Y^{18} + 9 X^{5} Y^{16} + X^{4} Y^{17}  -3 X^{6} Y^{14} \\&   + 5 X^{5} Y^{15} + 4
X^{4} Y^{16} - 2 X^{3} Y^{17} + 2 X^{5} Y^{14} + 8 X^{4} Y^{15} -  X^{3} Y^{16}
 -6 X^{5} Y^{13} + 3 X^{4} Y^{14} \\& + X^{5} Y^{12} + 6 X^{4} Y^{13} + 5 X^{3}
 Y^{14} - 4 X^{4} Y^{12} + X^{3} Y^{13}  - X^{2} Y^{14} - 3 X^{4} Y^{11} + 6
 X^{3} Y^{12}  + X^{2} Y^{13} \\& - 2 X^{4} Y^{10} - 2 X^{3} Y^{11} -  X^{4}
 Y^{9}  -2 X^{3} Y^{10} + 4 X^{2} Y^{11} + 2 X^{3} Y^{9} -  X^{2} Y^{10} -  X
 Y^{11}  - 2 X^{3} Y^{8} \\& - 3 X^{2} Y^{9}  + X Y^{10} - 2 X^{2} Y^{8} + X Y^{9} -
 3 X^{2} Y^{7} + 2 X^{2} Y^{6} - 2 X Y^{6} -  X Y^{5} - X Y^{4} + X Y^{3} + 1 
\end{align*}}

{\scriptsize\begin{align*}
\label{FX4}
F_{\QQ[T]/T^4}  = \, &
- X^{49} Y^{54} -  X^{49} Y^{53} - 2 X^{48} Y^{52} -  X^{47} Y^{52} - 3 X^{47} Y^{51} - 2 X^{46} Y^{51} - 3 X^{46} Y^{50} \\
& -4 X^{45} Y^{50} + X^{46} Y^{48} - 4 X^{45} Y^{49} + X^{45} Y^{48} - 4 X^{44} Y^{49} + X^{45} Y^{47} - 2 X^{44} Y^{48} \\
& + 3 X^{44} Y^{47} - 8 X^{43} Y^{48} + X^{44} Y^{46} -  X^{43} Y^{47} + 8 X^{43} Y^{46} - 9 X^{42} Y^{47} + X^{43} Y^{45} \\
& + 3 X^{42} Y^{46} + 9 X^{42} Y^{45} - 12 X^{41} Y^{46} + X^{42} Y^{44} + 10 X^{41} Y^{45} + 10 X^{41} Y^{44} - 13 X^{40} Y^{45} \\
& + 23 X^{40} Y^{44} + 7 X^{40} Y^{43} - 19 X^{39} Y^{44} - 3 X^{40} Y^{42} + 35 X^{39} Y^{43} + 3 X^{39} Y^{42} - 19 X^{38} Y^{43} \\
& -3 X^{39} Y^{41} + 54 X^{38} Y^{42} - 15 X^{38} Y^{41} - 24 X^{37} Y^{42} - 6 X^{38} Y^{40} + 74 X^{37} Y^{41} - 31 X^{37} Y^{40} \\
& -25 X^{36} Y^{41} - 5 X^{37} Y^{39} + 95 X^{36} Y^{40} - 55 X^{36} Y^{39} - 30 X^{35} Y^{40} - 4 X^{36} Y^{38} + 110 X^{35} Y^{39} \\
& - X^{36} Y^{37} - 85 X^{35} Y^{38} - 28 X^{34} Y^{39} + 10 X^{35} Y^{37} + 131 X^{34} Y^{38} - 3 X^{35} Y^{36} - 127 X^{34} Y^{37} \\
& -31 X^{33} Y^{38} + 22 X^{34} Y^{36} + 143 X^{33} Y^{37} - 4 X^{34} Y^{35} - 160 X^{33} Y^{36} - 29 X^{32} Y^{37} + 46 X^{33} Y^{35} \\
& + 154 X^{32} Y^{36} - 8 X^{33} Y^{34} - 204 X^{32} Y^{35} - 30 X^{31} Y^{36} + 73 X^{32} Y^{34} + 159 X^{31} Y^{35} - 11 X^{32} Y^{33} \\
& -246 X^{31} Y^{34} - 26 X^{30} Y^{35} + X^{32} Y^{32} + 113 X^{31} Y^{33} + 169 X^{30} Y^{34} - 19 X^{31} Y^{32} - 290 X^{30} Y^{33} \\
& -27 X^{29} Y^{34} + X^{31} Y^{31} + 148 X^{30} Y^{32} + 166 X^{29} Y^{33} - 26 X^{30} Y^{31} - 314 X^{29} Y^{32} - 23 X^{28} Y^{33} \\
& + 3 X^{30} Y^{30} + 193 X^{29} Y^{31} + 162 X^{28} Y^{32} - 39 X^{29} Y^{30} - 344 X^{28} Y^{31} - 22 X^{27} Y^{32} + 3 X^{29} Y^{29} \\
& + 230 X^{28} Y^{30} + 153 X^{27} Y^{31} - 49 X^{28} Y^{29} - 354 X^{27} Y^{30} - 17 X^{26} Y^{31} + 6 X^{28} Y^{28} + 271 X^{27} Y^{29} \\
& + 142 X^{26} Y^{30} - 68 X^{27} Y^{28} - 359 X^{26} Y^{29} - 16 X^{25} Y^{30} + 6 X^{27} Y^{27} + 301 X^{26} Y^{28} + 121 X^{25} Y^{29} \\
& -85 X^{26} Y^{27} - 344 X^{25} Y^{28} - 11 X^{24} Y^{29} + 10 X^{26} Y^{26} + 332 X^{25} Y^{27} + 104 X^{24} Y^{28} - 104 X^{25} Y^{26} \\
& -332 X^{24} Y^{27} - 10 X^{23} Y^{28} + 11 X^{25} Y^{25} + 344 X^{24} Y^{26} + 85 X^{23} Y^{27} - 121 X^{24} Y^{25} - 301 X^{23} Y^{26} \\
& -6 X^{22} Y^{27} + 16 X^{24} Y^{24} + 359 X^{23} Y^{25} + 68 X^{22} Y^{26} - 142 X^{23} Y^{24} - 271 X^{22} Y^{25} - 6 X^{21} Y^{26} \\
& + 17 X^{23} Y^{23} + 354 X^{22} Y^{24} + 49 X^{21} Y^{25} - 153 X^{22} Y^{23} - 230 X^{21} Y^{24} - 3 X^{20} Y^{25} + 22 X^{22} Y^{22} \\
& + 344 X^{21} Y^{23} + 39 X^{20} Y^{24} - 162 X^{21} Y^{22} - 193 X^{20} Y^{23} - 3 X^{19} Y^{24} + 23 X^{21} Y^{21} + 314 X^{20} Y^{22} \\
& + 26 X^{19} Y^{23} - 166 X^{20} Y^{21} - 148 X^{19} Y^{22} -  X^{18} Y^{23} + 27 X^{20} Y^{20} + 290 X^{19} Y^{21} + 19 X^{18} Y^{22} \\
& -169 X^{19} Y^{20} - 113 X^{18} Y^{21} -  X^{17} Y^{22} + 26 X^{19} Y^{19} + 246 X^{18} Y^{20} + 11 X^{17} Y^{21} - 159 X^{18} Y^{19} \\
& -73 X^{17} Y^{20} + 30 X^{18} Y^{18} + 204 X^{17} Y^{19} + 8 X^{16} Y^{20} - 154 X^{17} Y^{18} - 46 X^{16} Y^{19} + 29 X^{17} Y^{17} \\
& + 160 X^{16} Y^{18} + 4 X^{15} Y^{19} - 143 X^{16} Y^{17} - 22 X^{15} Y^{18} + 31 X^{16} Y^{16} + 127 X^{15} Y^{17} + 3 X^{14} Y^{18} \\
& -131 X^{15} Y^{16} - 10 X^{14} Y^{17} + 28 X^{15} Y^{15} + 85 X^{14} Y^{16} + X^{13} Y^{17} - 110 X^{14} Y^{15} + 4 X^{13} Y^{16} \\
& + 30 X^{14} Y^{14} + 55 X^{13} Y^{15} - 95 X^{13} Y^{14} + 5 X^{12} Y^{15} + 25 X^{13} Y^{13} + 31 X^{12} Y^{14} - 74 X^{12} Y^{13} \\
& + 6 X^{11} Y^{14} + 24 X^{12} Y^{12} + 15 X^{11} Y^{13} - 54 X^{11} Y^{12} + 3 X^{10} Y^{13} + 19 X^{11} Y^{11} - 3 X^{10} Y^{12} \\
& -35 X^{10} Y^{11} + 3 X^{9} Y^{12} + 19 X^{10} Y^{10} - 7 X^{9} Y^{11} - 23 X^{9} Y^{10} + 13 X^{9} Y^{9} - 10 X^{8} Y^{10} \\
& -10 X^{8} Y^{9} -  X^{7} Y^{10} + 12 X^{8} Y^{8} - 9 X^{7} Y^{9} - 3 X^{7} Y^{8} -  X^{6} Y^{9} + 9 X^{7} Y^{7} \\
& -8 X^{6} Y^{8} + X^{6} Y^{7} -  X^{5} Y^{8} + 8 X^{6} Y^{6} - 3 X^{5} Y^{7} + 2 X^{5} Y^{6} -  X^{4} Y^{7} \\
& + 4 X^{5} Y^{5} -  X^{4} Y^{6} + 4 X^{4} Y^{5} -  X^{3} Y^{6} + 4 X^{4} Y^{4} + 3 X^{3} Y^{4} + 2 X^{3} Y^{3} \\
& + 3 X^{2} Y^{3} + X^{2} Y^{2} + 2 X Y^{2} + Y + 1 
\end{align*}}

{\scriptsize\begin{align*}
F_{\Uni_3^2} =\, &
- X^{43} Y^{57} + 4 X^{41} Y^{53} -  X^{41} Y^{52} - 3 X^{40} Y^{53} - 2 X^{41} Y^{51} + 4 X^{40} Y^{52} + X^{41} Y^{50} 
 + 4 X^{40} Y^{51} \\& -  X^{39} Y^{52} - 3 X^{40} Y^{50} - 3 X^{39} Y^{51} + 5 X^{39} Y^{50} + 4 X^{38} Y^{51} - 4 X^{38} Y^{50} 
 - X^{37} Y^{51} - 2 X^{39} Y^{48} \\& - 3 X^{38} Y^{49} + 3 X^{37} Y^{50} + X^{39} Y^{47} -  X^{38} Y^{48} + 5 X^{37} Y^{49} 
 -3 X^{38} Y^{47} - 3 X^{37} Y^{48} -  X^{36} Y^{49} \\& + 3 X^{38} Y^{46} + 4 X^{36} Y^{48} - 6 X^{37} Y^{46} - 2 X^{36} Y^{47} 
 + X^{35} Y^{48} -  X^{37} Y^{45} -  X^{36} Y^{46} - 2 X^{35} Y^{47} \\& + 2 X^{37} Y^{44} + 4 X^{36} Y^{45} - 4 X^{35} Y^{46} 
 + 3 X^{34} Y^{47} - 3 X^{36} Y^{44} - 6 X^{35} Y^{45} + X^{34} Y^{46}  -
 X^{36} Y^{43} \\& + 8 X^{35} Y^{44} - 6 X^{34} Y^{45} 
 + X^{33} Y^{46} + 6 X^{35} Y^{43} - 9 X^{34} Y^{44} + 4 X^{33} Y^{45} -  X^{35}
 Y^{42}  - 4 X^{34} Y^{43} \\& - 6 X^{33} Y^{44} 
 - X^{32} Y^{45} - 2 X^{35} Y^{41} + 13 X^{34} Y^{42} + 4 X^{33} Y^{43} - 2
 X^{32} Y^{44} + 7 X^{34} Y^{41} - 11 X^{33} Y^{42}  \\&
 -6 X^{32} Y^{43}  + 3 X^{31} Y^{44} - 4 X^{34} Y^{40} - 2 X^{33} Y^{41} + 10 X^{32} Y^{42} - 4 X^{31} Y^{43} + 10 X^{33} Y^{40} 
 + 19 X^{32} Y^{41} \\& - 8 X^{31} Y^{42} + X^{30} Y^{43} - 2 X^{33} Y^{39} - 16 X^{32} Y^{40} - 21 X^{31} Y^{41} - 2 X^{30} Y^{42} 
 -7 X^{32} Y^{39} + 40 X^{31} Y^{40} \\& + 6 X^{30} Y^{41} -  X^{29} Y^{42} + 8 X^{32} Y^{38} + 19 X^{31} Y^{39} - 14 X^{30} Y^{40} 
 -3 X^{32} Y^{37} - 30 X^{31} Y^{38} - 9 X^{30} Y^{39} \\& - 9 X^{29} Y^{40} - 2 X^{28} Y^{41} + 2 X^{31} Y^{37} + 32 X^{30} Y^{38} 
 + 32 X^{29} Y^{39} + 2 X^{28} Y^{40} + 2 X^{30} Y^{37} - 10 X^{29} Y^{38} \\& - 19 X^{28} Y^{39} - 13 X^{30} Y^{36} - 26 X^{29} Y^{37} 
 + 18 X^{28} Y^{38} -  X^{27} Y^{39} + 18 X^{29} Y^{36}  + 42 X^{28} Y^{37} +
 X^{26} Y^{39} \\& + X^{30} Y^{34} - 3 X^{29} Y^{35} 
 -31 X^{28} Y^{36} - 15 X^{27} Y^{37} - 5 X^{26} Y^{38}  - 23 X^{28} Y^{35} + 29
 X^{27} Y^{36} + 14 X^{26} Y^{37} \\& + X^{29} Y^{33} 
 + 2 X^{28} Y^{34} + 17 X^{27} Y^{35} - 5 X^{25} Y^{37}  + 3 X^{28} Y^{33} - 30 X^{27} Y^{34} - 20 X^{26} Y^{35} + 5 X^{25} Y^{36} 
 \\&+ 3 X^{28} Y^{32} -  X^{27} Y^{33} + 20 X^{26} Y^{34}  + 33 X^{25} Y^{35} - 2 X^{24} Y^{36} - 12 X^{26} Y^{33} - 20 X^{25} Y^{34} 
 -9 X^{24} Y^{35} \\& -  X^{27} Y^{31} - 4 X^{26} Y^{32}  - 34 X^{25} Y^{33} + 15 X^{24} Y^{34} + 2 X^{23} Y^{35} + 3 X^{27} Y^{30} 
 + 12 X^{26} Y^{31} + 9 X^{25} Y^{32} \\& + 34 X^{24} Y^{33}  + X^{23} Y^{34} - 7 X^{26} Y^{30} + X^{25} Y^{31} - 48 X^{24} Y^{32} 
 -17 X^{23} Y^{33} + X^{22} Y^{34} + X^{26} Y^{29} \\& + 13 X^{25} Y^{30} - 14 X^{24} Y^{31} + 18 X^{23} Y^{32} + 10 X^{22} Y^{33} 
 + 4 X^{25} Y^{29} + 13 X^{24} Y^{30} - 4 X^{23} Y^{31}  \\& + 5 X^{22} Y^{32} - 5 X^{25} Y^{28} - 4 X^{24} Y^{29} - 20 X^{23} Y^{30} 
 -36 X^{22} Y^{31} - 3 X^{21} Y^{32} + 15 X^{24} Y^{28}  \\& + 20 X^{23} Y^{29} + 3 X^{22} Y^{30} + 26 X^{21} Y^{31} + X^{20} Y^{32} 
 -4 X^{24} Y^{27} - 8 X^{23} Y^{28} - 2 X^{22} Y^{29} \\& - 21 X^{21} Y^{30} - 3 X^{20} Y^{31} + 22 X^{22} Y^{28} - 22 X^{21} Y^{29} 
 + 3 X^{23} Y^{26} + 21 X^{22} Y^{27} + 2 X^{21} Y^{28} \\& + 8 X^{20} Y^{29} + 4 X^{19} Y^{30} -  X^{23} Y^{25} - 26 X^{22} Y^{26} 
 -3 X^{21} Y^{27} - 20 X^{20} Y^{28} - 15 X^{19} Y^{29} \\& + 3 X^{22} Y^{25} + 36 X^{21} Y^{26} + 20 X^{20} Y^{27} + 4 X^{19} Y^{28} 
 + 5 X^{18} Y^{29} - 5 X^{21} Y^{25} + 4 X^{20} Y^{26} \\& - 13 X^{19} Y^{27} - 4 X^{18} Y^{28} - 10 X^{21} Y^{24} - 18 X^{20} Y^{25} 
 + 14 X^{19} Y^{26} - 13 X^{18} Y^{27} -  X^{17} Y^{28} \\& -  X^{21} Y^{23} + 17 X^{20} Y^{24} + 48 X^{19} Y^{25} -  X^{18} Y^{26} 
 + 7 X^{17} Y^{27} -  X^{20} Y^{23} - 34 X^{19} Y^{24} \\& - 9 X^{18} Y^{25} - 12 X^{17} Y^{26} - 3 X^{16} Y^{27} - 2 X^{20} Y^{22} 
 -15 X^{19} Y^{23} + 34 X^{18} Y^{24} + 4 X^{17} Y^{25} \\& + X^{16} Y^{26} + 9 X^{19} Y^{22} + 20 X^{18} Y^{23} + 12 X^{17} Y^{24} 
 + 2 X^{19} Y^{21} - 33 X^{18} Y^{22} - 20 X^{17} Y^{23} \\& + X^{16} Y^{24} - 3 X^{15} Y^{25} - 5 X^{18} Y^{21} + 20 X^{17} Y^{22} 
 + 30 X^{16} Y^{23} - 3 X^{15} Y^{24} + 5 X^{18} Y^{20} \\& - 17 X^{16} Y^{22} - 2 X^{15} Y^{23} -  X^{14} Y^{24} - 14 X^{17} Y^{20} 
 -29 X^{16} Y^{21} + 23 X^{15} Y^{22} + 5 X^{17} Y^{19} \\& + 15 X^{16} Y^{20} + 31 X^{15} Y^{21} + 3 X^{14} Y^{22} -  X^{13} Y^{23} 
 - X^{17} Y^{18} - 42 X^{15} Y^{20} - 18 X^{14} Y^{21} \\& + X^{16} Y^{18} - 18 X^{15} Y^{19} + 26 X^{14} Y^{20} + 13 X^{13} Y^{21} 
 + 19 X^{15} Y^{18} + 10 X^{14} Y^{19} - 2 X^{13} Y^{20} \\& - 2 X^{15} Y^{17} - 32 X^{14} Y^{18} - 32 X^{13} Y^{19} - 2 X^{12} Y^{20} 
 + 2 X^{15} Y^{16} + 9 X^{14} Y^{17} + 9 X^{13} Y^{18} \\& + 30 X^{12} Y^{19} + 3 X^{11} Y^{20} + 14 X^{13} Y^{17} - 19 X^{12} Y^{18} 
 -8 X^{11} Y^{19} + X^{14} Y^{15} - 6 X^{13} Y^{16} \\&  - 40 X^{12} Y^{17} + 7 X^{11} Y^{18} + 2 X^{13} Y^{15} + 21 X^{12} Y^{16} 
 + 16 X^{11} Y^{17} + 2 X^{10} Y^{18} -  X^{13} Y^{14}  + 8 X^{12} Y^{15} \\& - 19 X^{11} Y^{16} - 10 X^{10} Y^{17} + 4 X^{12} Y^{14} 
 -10 X^{11} Y^{15} + 2 X^{10} Y^{16} + 4 X^{9} Y^{17}  - 3 X^{12} Y^{13} \\& + 6 X^{11} Y^{14} + 11 X^{10} Y^{15} - 7 X^{9} Y^{16} 
 + 2 X^{11} Y^{13} - 4 X^{10} Y^{14} - 13 X^{9} Y^{15}  + 2 X^{8} Y^{16}  +
 X^{11} Y^{12} \\& + 6 X^{10} Y^{13} + 4 X^{9} Y^{14} 
 + X^{8} Y^{15} - 4 X^{10} Y^{12} + 9 X^{9} Y^{13} - 6 X^{8} Y^{14}  -  X^{10}
 Y^{11}  + 6 X^{9} Y^{12} - 8 X^{8} Y^{13} \\&
 + X^{7} Y^{14} -  X^{9} Y^{11} + 6 X^{8} Y^{12} + 3 X^{7} Y^{13} - 3 X^{9}
 Y^{10}  + 4 X^{8} Y^{11}  - 4 X^{7} Y^{12} 
 -2 X^{6} Y^{13} + 2 X^{8} Y^{10} \\& + X^{7} Y^{11} + X^{6} Y^{12} -  X^{8} Y^{9} +
 2 X^{7} Y^{10}   + 6 X^{6} Y^{11} 
 -4 X^{7} Y^{9} - 3 X^{5} Y^{11} + X^{7} Y^{8} + 3 X^{6} Y^{9} + 3 X^{5} Y^{10}
 \\& - 5 X^{6} Y^{8} + X^{5} Y^{9} 
 - X^{4} Y^{10}  - 3 X^{6} Y^{7} + 3 X^{5} Y^{8} + 2 X^{4} Y^{9} + X^{6} Y^{6} + 4 X^{5} Y^{7} - 4 X^{5} Y^{6} 
 -5 X^{4} Y^{7} \\& + 3 X^{4} Y^{6} + 3 X^{3} Y^{7}  + X^{4} Y^{5} - 4 X^{3} Y^{6} -  X^{2} Y^{7} - 4 X^{3} Y^{5} 
 + 2 X^{2} Y^{6} + 3 X^{3} Y^{4} + X^{2} Y^{5} - 4 X^{2} Y^{4} + 1 
\end{align*}}

\section{Formulae for local graded subalgebra zeta functions}
\label{app:grsub}

{\small
\begin{equation}
  \label{subalgebras:m5_3_1}
  \begin{aligned}
    W_{531} = \,& \bigl(- X^{5} Y^{18} -  X^{5} Y^{16} -  X^{5} Y^{15} -  X^{4} Y^{16} -  X^{5} Y^{14} -  X^{4} Y^{15} + 2 X^{4} Y^{13} 
    + X^{3} Y^{14} \\& + X^{4} Y^{12} + 2 X^{3} Y^{13} + X^{4} Y^{11} + X^{3} Y^{12} + X^{2} Y^{13} + X^{4} Y^{10} 
    + X^{3} Y^{11} + X^{4} Y^{9} \\&+ 3 X^{3} Y^{10} + 2 X^{3} Y^{9} + X^{2} Y^{10} -
    X^{3} Y^{8} - 2 X^{2} Y^{9} 
     -3 X^{2} Y^{8} -  X Y^{9} -  X^{2} Y^{7} \\& -  X Y^{8} -  X^{3} Y^{5} -  X^{2} Y^{6} -  X Y^{7} 
    -2 X^{2} Y^{5} -  X Y^{6} -  X^{2} Y^{4} - 2 X Y^{5} + X Y^{3} \\& + Y^{4} + X Y^{2} 
    + Y^{3} + Y^{2} + 1 \bigr)\\&/
    \bigl(
    {\left(1 - X Y^{5}\right)}
    {\left(1 - X^{2} Y^{3}\right)}
    {\left(1 - X^2 Y^{4}\right)}
    {\left(1 - X Y^{2}\right)}
    {\left(1 - X Y\right)}
    {\left(1-Y^5\right)}
    {\left(1 - Y^2\right)}
    {\left(1 - Y\right)}\bigr)
\end{aligned}
\end{equation}
}

{\small
  \begin{equation}
  \label{subalgebras:m5_4_1}
  \begin{aligned}
    W_{541} =\, &
    \bigl(- X^{3} Y^{21} -  X^{3} Y^{20} - 3 X^{3} Y^{19} - 5 X^{3} Y^{18} - 7 X^{3} Y^{17} + X^{2} Y^{18} - 8 X^{3} Y^{16}
    + 4 X^{2} Y^{17} \\& - 7 X^{3} Y^{15} + 9 X^{2} Y^{16} - 6 X^{3} Y^{14} + 16 X^{2} Y^{15} - 6 X^{3} Y^{13} + 19 X^{2} Y^{14} 
    - X Y^{15} - 4 X^{3} Y^{12} \\& + 21 X^{2} Y^{13} - 4 X Y^{14} - 3 X^{3} Y^{11} + 21 X^{2} Y^{12} - 8 X Y^{13} 
    - X^{3} Y^{10} + 20 X^{2} Y^{11} - 14 X Y^{12} \\&  + 18 X^{2} Y^{10} - 18 X Y^{11} + 14 X^{2} Y^{9} - 20 X Y^{10} 
    + Y^{11} + 8 X^{2} Y^{8} - 21 X Y^{9}  + 3 Y^{10} \\& + 4 X^{2} Y^{7} - 21 X Y^{8} + 4 Y^{9} 
    + X^{2} Y^{6} - 19 X Y^{7} + 6 Y^{8} - 16 X Y^{6} + 6 Y^{7} - 9 X Y^{5} \\& + 7 Y^{6} 
    -4 X Y^{4} + 8 Y^{5} -  X Y^{3} + 7 Y^{4} + 5 Y^{3} + 3 Y^{2} + Y + 1 \bigr) \\&
    /\bigl(
    {\left(1 - X Y^{4} \right)} {\left(1 - X Y^{3} \right)}
    {\left(1 - X Y^{2}\right)} {\left(1 - X Y\right)} 
    {\left(1 - Y^7\right)}
    {\left(1 - Y^{4}\right)} {\left(1 - Y^3\right)} {\left(1-Y^2\right)}
    \bigr)
  \end{aligned}
\end{equation}}

{\small
  \begin{equation}
    \label{subalgebras:m6_2_1}
    \begin{aligned}
      W_{621} =\, &
      \bigl(X^{4} Y^{8} + X^{4} Y^{6} + X^{3} Y^{6} - X^{3} Y^{5} + X^{2}
      Y^{6} - X^{3} Y^{4} - X^{2} Y^{5} - X^{3} Y^{3} - X Y^{5} \\&- X^{2}
      Y^{3} - X Y^{4} + X^{2} Y^{2} - X Y^{3} + X Y^{2} + Y^{2} + 1\bigr)
      \left(X Y^{2} + 1\right)  \\&/ \bigl(
      {\left(1 - X^{2} Y^{3}\right)}
      {\left(1 - X Y^{3} \right)}
      {\left(1 - X^{3} Y^{2}\right)}
      {\left(1 - X^2 Y^2\right)}
      {\left(1 - X^{2} Y\right)}
      {\left(1 - X Y\right)}
      {\left(1 - Y^3\right)}
      {\left(1 - Y \right)}
      \bigr)
  \end{aligned}
\end{equation}}

{\small
  \begin{equation}
    \label{subalgebras:m6_2_3}
    \begin{aligned}
      W_{623} =\, &
      \bigl(X^{9} Y^{14} + X^{9} Y^{13} + X^{9} Y^{12} - 3 X^{8} Y^{11} - 2
      X^{8} Y^{10} -  X^{6} Y^{11} -  X^{6} Y^{10} + X^{7} Y^{8} - 3 X^{6} Y^{9}
      \\ & -  X^{5} Y^{9} -  X^{4} Y^{10} + X^{6} Y^{7} + 2 X^{5} Y^{8} + X^{6}
      Y^{6}
      + 2 X^{5} Y^{7} + 2 X^{4} Y^{8} + 2 X^{5} Y^{6} + 2 X^{4} Y^{7} 
      \\ &
      + X^{3} Y^{8} + 2 X^{4} Y^{6} + X^{3} Y^{7} - X^{5} Y^{4} -  X^{4} Y^{5}
        - 3 X^{3} Y^{5} + X^{2} Y^{6} -  X^{3} Y^{4} -  X^{3} Y^{3} \\& - 2 X Y^{4}
        -3 X Y^{3} + Y^{2} + Y + 1 \bigr) / \bigl(
    {\left(1 - X^{4} Y^{3}\right)}
    {\left(1 - X^3Y^3 \right)}
    {\left(1 - X Y^{3}\right)}
    {\left(1 - X Y^{2}\right)}^{2}  \\& \quad\quad\quad\quad\quad\quad\quad\quad\quad\quad\quad\quad\quad\quad\quad \times
    {\left(1 - X^{3} Y\right)} 
    {\left(1 - X^{2} Y\right)}
    {\left(1 - X Y\right)}
    {\left(1 - Y^2\right)}^{2}
    \bigr)
  \end{aligned}
\end{equation}}

{\small \begin{equation}
  \label{subalgebras:m6_3_1}
  \begin{aligned}
    W_{631} = \,& \bigl(
    - X^{5} Y^{18} -  X^{5} Y^{16} -  X^{5} Y^{15} -  X^{4} Y^{16} -  X^{5}
    Y^{14} -  X^{4} Y^{15} + 2 X^{4} Y^{13}  + X^{3} Y^{14} \\& + X^{4} Y^{12} 
    + 2 X^{3} Y^{13} + X^{4} Y^{11} + X^{3} Y^{12} + X^{2} Y^{13} + X^{4} Y^{10}
    + X^{3} Y^{11} + X^{4} Y^{9} \\& + 3 X^{3} Y^{10}  + 2 X^{3} Y^{9}   + X^{2}
    Y^{10} -  X^{3} Y^{8} - 2 X^{2} Y^{9} -3 X^{2} Y^{8} -  X Y^{9} -  X^{2}
    Y^{7} -  X Y^{8} \\&  -  X^{3} Y^{5} -  X^{2} Y^{6}
    - X Y^{7}  -2 X^{2} Y^{5} -  X Y^{6} -  X^{2} Y^{4} - 2 X Y^{5} + X Y^{3} + Y^{4} + X
    Y^{2} \\& + Y^{3} + Y^{2} + 1 \bigr) / \bigl(
    {\left(1 - X Y^{5} \right)} 
    {\left(1 - X^2 Y^{4}\right)} 
    {\left(1 - X^{2} Y^{3}\right)}
    {\left(1 - X Y^{2}\right)}
    {\left(1 - X^{2} Y\right)} \\& \quad\quad\quad\quad\quad\quad\quad\quad \times
    {\left(1 - X Y\right)} 
    {\left(1 - Y^5\right)}
    {\left(1 - Y^2\right)}
    {\left(1 - Y\right)}
    \bigr)
  \end{aligned}
\end{equation}}

{\small \begin{equation}
  \label{subalgebras:m6_3_2}
  \begin{aligned}
    W_{632} = \,& \bigl(
    X^{3} Y^{16} + 2 X^{3} Y^{15} + 4 X^{3} Y^{14} + 7 X^{3} Y^{13} + X^{2}
    Y^{14} + 10 X^{3} Y^{12} + 2 X^{2} Y^{13} + 11 X^{3} Y^{11} \\&
    - X Y^{13}   + 10 X^{3} Y^{10} - 3 X^{2} Y^{11} - 3 X Y^{12} + 7 X^{3} Y^{9}
    - 8 X^{2} Y^{10} -6 X Y^{11} + 4 X^{3} Y^{8} \\&
    - 11 X^{2} Y^{9} - 9 X Y^{10} + 2 X^{3} Y^{7} - 11 X^{2} Y^{8} - 10 X Y^{9}
    + Y^{10} + X^{3} Y^{6} - 10 X^{2} Y^{7} \\& - 11 X Y^{8} + 2 Y^{9} - 9 X^{2}
    Y^{6} - 11 X Y^{7} 
    + 4 Y^{8} - 6 X^{2} Y^{5} - 8 X Y^{6} + 7 Y^{7} - 3 X^{2} Y^{4} \\& - 3 X
    Y^{5} + 10 Y^{6} 
    - X^{2} Y^{3} + 11 Y^{5} + 2 X Y^{3} + 10 Y^{4} + X Y^{2} + 7 Y^{3} + 4
    Y^{2}  + 2 Y + 1 \bigr) \\& \times (1-Y)/\bigl(
    {\left(1 - X Y^{3}\right)}
    {\left(1 - X^2 Y^2\right)}
    {\left(1 - X Y^{2}\right)}
    {\left(1 - X^{2} Y\right)}
    {\left(1 - X Y\right)} \\& \quad\quad\quad\quad\quad\times
    {\left(1 - Y^5\right)}
    {\left(1 - Y^{4}\right)}
    {\left(1 - Y^3\right)}
    {\left(1 - Y^2\right)}
    \bigr)
  \end{aligned}
\end{equation}}

{\small \begin{equation}
  \label{subalgebras:m6_3_3}
  \begin{aligned}
    W_{633} = & \bigl(
    X^{3} Y^{14} + X^{3} Y^{13} + 3 X^{3} Y^{12} + 4 X^{3} Y^{11} + X^{2} Y^{12}
    + 5 X^{3} Y^{10} + 4 X^{3} Y^{9}  - X^{2} Y^{10}  \\& -  X Y^{11} 
        + 3 X^{3}  Y^{8} 
    - 3 X^{2} Y^{9} -  X Y^{10} + X^{3} Y^{7} - 5
    X^{2} Y^{8} -4 X Y^{9} + X^{3} Y^{6} - 5 X^{2} Y^{7} 
     \\& - 4 X Y^{8} - 4 X^{2} Y^{6} - 5 X Y^{7} + Y^{8}  -4 X^{2} Y^{5} - 5 X Y^{6}
    + Y^{7} -  X^{2} Y^{4} - 3 X Y^{5} + 3 Y^{6}   \\& -  X^{2} Y^{3} - X Y^{4} 
    + 4 Y^{5} + 5 Y^{4} + X Y^{2} + 4 Y^{3} + 3 Y^{2} +
    Y + 1 \bigr) \\& / \bigl(
    {\left(1 - X Y^{3}\right)}
    {\left(1 - X^2 Y^2\right)}
    {\left(1 - X Y^{2}\right)}
    {\left(1 - X^{2} Y\right)}
    {\left(1 - X Y\right)}
    {\left(1 - Y^3\right)}
    {\left(1 - Y^4\right)}^{2}
    \bigr)
  \end{aligned}
\end{equation}}

{\small \begin{equation}
  \label{subalgebras:m6_4_3}
  \begin{aligned}
W_{643} = & \bigl(
- X^{3} Y^{21} -  X^{3} Y^{20} - 3 X^{3} Y^{19} - 5 X^{3} Y^{18} - 7 X^{3}
Y^{17} + X^{2} Y^{18} - 8 X^{3} Y^{16} + 4 X^{2} Y^{17} 
\\& - 7 X^{3} Y^{15} + 9 X^{2} Y^{16} - 6 X^{3} Y^{14} + 16 X^{2} Y^{15} - 6
X^{3} Y^{13} + 19 X^{2} Y^{14} - X Y^{15} - 4 X^{3} Y^{12} \\& + 21 X^{2} Y^{13} - 4 X Y^{14} - 3 X^{3} Y^{11} + 21 X^{2}
Y^{12} - 8 X Y^{13} - X^{3} Y^{10} + 20 X^{2} Y^{11} - 14 X Y^{12}    \\&  + 18 X^{2} 
Y^{10}  - 18 X Y^{11}+ 14 X^{2} Y^{9} - 20 X Y^{10}  + Y^{11} + 8 X^{2} Y^{8}
 - 21 X Y^{9} + 3 Y^{10} \\& + 4 X^{2} Y^{7} - 21 X Y^{8} + 4 Y^{9}  + X^{2}
Y^{6} - 19 X Y^{7} + 6 Y^{8} - 16 X Y^{6}  + 6 Y^{7} - 9 X Y^{5} + 7 Y^{6} 
\\&  -4 X Y^{4} + 8 Y^{5} -  X Y^{3} + 7 Y^{4} + 5 Y^{3} + 3 Y^{2} + Y 
 + 1\bigr) \\& / \bigl(
 {\left(1 - X Y^{4} \right)}
 {\left(1 - X Y^{3}\right)}
 {\left(1 - X Y^{2}\right)}
 {\left(1 - X^{2} Y\right)}
 {\left(1 - X Y\right)} \\& \quad \times
 {\left(1 - Y^{7}\right)}
 {\left(1 - Y^{4}\right)}
 {\left(1 - Y^3\right)}
 {\left(1 - Y^2\right)}
\bigr)
\end{aligned}
\end{equation}}

{
  \bibliographystyle{abbrv}
  \tiny
  \bibliography{padzeta}
}

\end{document}